    \definecolor{myred}{RGB}{187, 85, 102}
    \definecolor{myblue}{RGB}{0, 68, 136}
\newtheorem{Thm}{Theorem}[section]
\newtheorem{Lemma}[Thm]{Lemma}
\newtheorem{Cor}[Thm]{Corollary}
\newtheorem{Prop}[Thm]{Proposition}
\theoremstyle{plain}
\newtheorem{theorem}{Theorem}[section]
\theoremstyle{definition}
\newtheorem{definition}[theorem]{Definition}
\newtheorem{example}[theorem]{Example}
\theoremstyle{remark}
\newtheorem{remark}[theorem]{Remark}
\journal{Journal of Mathematical Analysis and Applications}
\begin{document}

\begin{frontmatter}



\title{Symmetry, Scaling, and Optimal Time–Frequency Concentration:
Minimising the Heisenberg Uncertainty in Piecewise‑Polynomial and Wavelet Dictionaries}

\author{Lorenzo de Leonardis\fnref{label1}}
\author{Alessandro Mazzoccoli\fnref{label1}}
\author{\fnref{label1} Pierluigi Vellucci*}
\fntext[label1]{Department of Economics, Roma Tre University, Via Silvio D’Amico 77, 00145 Rome, Italy
\\ *Corresponding author}

\begin{abstract}
In this work, we introduce a hierarchy of function classes defined on a fixed compact interval, along with tailored uncertainty operators. We establish key properties of the associated uncertainty product, showing that it is invariant under scale and translation transformations. Notably, we prove that the infimum of the uncertainty within the asymmetric class is attained in the even subclass. Within two specific wavelet dictionaries, we identify the tent function as the unique minimiser of the time-frequency uncertainty, achieving a value of $U = \frac{3}{10}$. Additionally, we analyse the family of $p$-fold self-convolutions of the rectangle function, $\operatorname{rect}^{\{p\}}$, demonstrating that the uncertainty decreases monotonically towards the Heisenberg bound $ \frac{1}{4} $ as $p \to \infty$. These findings unify and explain various empirical observations from the literature on adaptive wavelet design and Gabor frame stability, and suggest a principled approach to constructing dictionaries with provably optimal joint localisation properties.
\end{abstract}



\begin{keyword}
 Heisenberg Uncertainty\sep Wavelet Dictionaries \sep Piecewise-Polynomial \sep Time-frequency Concentration


\end{keyword}

\end{frontmatter}




\section{Introduction}\label{sec:intro}

The uncertainty principle is a pervasive theme in harmonic analysis, expressing that a nontrivial function cannot be sharply localized in both the original and frequency domains.  In practical terms, improving spatial (or time) concentration inevitably worsens frequency (or spectral) concentration, and vice versa.  This trade-off underlies many classical results, for example, a function and its Fourier transform cannot both have compact support, and the Heisenberg inequality quantifies a lower bound on the product of their effective widths.  Such principles have far-reaching consequences: they limit the resolution of time-frequency analysis and motivate the search for representations that optimally balance localization in dual domains.

One famous manifestation is the Balian–Low Theorem (BLT), which in essence states that time–frequency concentration is incompatible with an orthonormal basis of $L^2(\mathbb{R})$ generated by time-frequency shifts of a single function \cite{battle1988heisenberg}.  Originally proved in the context of Gabor (windowed Fourier) bases, the BLT implies that no nonredundant Gabor system can use a window function that is well-localized in both time and frequency.  Intuitively, if a window $g(t)$ is very sharply localized and its Fourier transform $\hat g(\omega)$ is also well-localized, then $\{e^{2\pi i m t}g(t-n)\}_{n,m\in\mathbb{Z}}$ cannot form an orthonormal basis of $L^2$ – some redundancy or poor decay must intervene.  Recent research has strengthened and generalized this theorem, highlighting its fundamental nature.  For example, \cite{caragea2023balian,caragea2021quantitative} proved a version of Balian–Low for arbitrary closed subspaces of $L^2$, showing that even if one only seeks a Gabor basis for a subspace (not all of $L^2$), severe localization constraints persist. Moreover, the scope of BLT has expanded beyond the real line: \cite{enstad2020balian} extended the theorem to general locally compact abelian groups (and even certain vector bundles), demonstrating that the incompatibility between joint localization and completeness is a universal phenomenon not tied to $\mathbb R$ alone.

These developments underscore that the inability to achieve simultaneous excellent time and frequency localization is intrinsic to a wide class of harmonic-analytic systems.

Another strain of the uncertainty principle focuses on quantitative measures of concentration.  In the context of periodic functions (or functions on the circle), \cite{breitenberger1985uncertainty} introduced an \emph{uncertainty constant} that quantifies joint localization on the circle (essentially an angular analog of the usual variance-based uncertainty) by a single number.  An important inequality due to \cite{breitenberger1985uncertainty} gives a lower bound for this constant, again indicating a fundamental limit to simultaneous localization.  Recent work has revisited and refined this result: \cite{lebedeva2017inequality} showed that for a broad class of periodic functions one can sharpen Breitenberger’s lower bound, improving our understanding of the best possible concentration achievable on the circle. In particular, in her 2017 study \cite{lebedeva2017inequality} she proved an inequality that refines the minimal attainable uncertainty constant for periodic functions. Earlier, \cite{goh2000uncertainty} had already shown that for \emph{periodic} scaling functions and wavelets that are uniformly local, regular, and stable, the associated uncertainty products are \emph{uniformly bounded above}, establishing constructive families (including periodic and trigonometric B–splines) that nearly saturate Breitenberger’s bound while keeping the constant finite. Such investigations not only reinforce the principle that one cannot circumvent the uncertainty principle even on compact domains, but also provide tools to identify extremal or “least uncertainty” functions on those domains.

The pursuit of extremal functions or bases that \emph{approach} the uncertainty limit is a theme connecting uncertainty principles with what we might call “optimal representation theory.”  A classical example is the family of prolate spheroidal wave functions, which achieve maximal energy concentration within a time interval for a fixed bandlimit, offering an optimal solution to the time-frequency trade-off. In the same spirit, researchers have sought bases or frames that are optimally localized in both domains.  For instance, \cite{lebedeva2014periodic} constructed a family of Parseval wavelet frames on the torus that achieve \emph{optimal} time-frequency localization in the sense of the Breitenberger constant.  Their work showed that by allowing a little redundancy (frames instead of bases) and carefully designing the wavelet filters, one can minimize the uncertainty constant uniformly across the frame.  This is a prime example of using an optimality criterion (minimizing an uncertainty measure) to guide the construction of a representation. It demonstrated that while a perfect “ideal” localization is impossible, one can still design wavelet or Gabor systems that are, in a certain sense, as concentrated as theoretically allowed. These optimally localized frames provide practical tools in signal analysis (where one desires sharp time and frequency localization) and, at the same time, illuminate the boundary between the possible and impossible in harmonic analysis. \textcolor{black}{Complementary to this perspective, recent progress pinpoints the Gabor frame set of compactly supported atoms-most notably B-splines-by proving sharp obstructions at (or near) critical density and by giving new sufficient conditions for frame generation (\cite{ghosh2025gabor,ghosh2025obstructions}).}

A recent paper by \cite{aldahleh2025additive} investigated signal recovery on finite abelian groups, focusing on the conditions under which a signal can be exactly reconstructed from a subset of its Fourier coefficients. Building on classical results, the authors introduced a new perspective based on additive energy, a combinatorial quantity measuring the number of additive relations in a set. They developed an improved form of the uncertainty principle in terms of additive energy, leading to stronger recovery guarantees.

Beyond classical Fourier analysis, the robustness of uncertainty limitations extends to generalized transforms and modern variants. \textcolor{black}{In parallel, optimal sampling results in shift-invariant spaces generated by Meyer scaling functions and by $B$-splines further clarify how spectral localization constrains stable reconstruction, echoing uncertainty-type barriers at the representation level (\cite{selvan2017optimal,selvan2016sampling}). Recent works from 2024–2025 sharpen this picture in wavelet-type settings, establishing quantitative large-sieve/uncertainty bounds for the continuous wavelet transform and new Heisenberg-/Donoho-Stark-/logarithmic principles for Mehler-Fock, metaplectic, and quaternionic wavelet transforms, thereby clarifying optimal joint concentration beyond the Fourier paradigm (\cite{abreu2025donoho,dades2024heisenberg,dades2025new,dar2024n,wang2024benedicks}).}

Given this broad landscape, 
it is natural to ask whether there exists a unifying principle or optimality criterion underlying these various uncertainty phenomena.

In this article, we outline a possible framework.  We show that many of the aforementioned uncertainty inequalities can be seen as necessary consequences of a single, overarching optimality criterion.  Roughly speaking, we consider an abstract functional measuring joint localization, and we characterize the functions (or signal expansions) that would extremize this measure.  Our main result is that if a function were to simultaneously achieve “optimal” spatial localization and “optimal” spectral accuracy in this broad sense, it must satisfy a certain extremal equation.  Crucially, this extremality condition \emph{cannot hold} for any nontrivial function, except in trivial or limiting cases, and from this fact the usual uncertainty principles follow.  In particular, our approach recovers the qualitative conclusions of the Balian–Low Theorem, Breitenberger’s inequality, and related uncertainty relations as corollaries of a single optimality framework.  This offers a new perspective: rather than proving each uncertainty principle separately via ad hoc analytical estimates, we derive them from the contrapositive of an optimal representation property. We believe this perspective not only unifies existing results but also provides insight into why the known “extremal” functions (Gaussians, prolate spheroidal wavefunctions, etc.) occupy their special status, since they are the ones that come closest to the unattainable ideal.

\textcolor{black}{In parallel to our variational viewpoint, recent wavelet-focused results deliver sharp concentration controls, most notably large-sieve and local Lieb-type inequalities for the continuous wavelet transform, which complement the impossibility/minimization mechanisms captured by our functional $U$ (\cite{abreu2025donoho}).}

This paper focuses on dictionaries as introduced in \cite{rivero2023solution}, which are collections of functions in $L^2(\mathbb{R})$ defined as follows.
\begin{definition}
    Let $\gamma = (t, \xi, u) \in \Gamma = \mathbb{R}_{>0} \times \mathbb{R}^2$. A \emph{waveform dictionary} $\mathcal{G}$ is a collection of functions in $L^2(\mathbb{R})$ of the form
$$
G_\gamma(x) = \frac{1}{\sqrt{t}}\, g\left(\frac{x - u}{t}\right) e^{2\pi i \xi x},
$$
where $g \in L^2(\mathbb{R})$ is the \emph{window function} satisfying $\|g\|_{L^2(\mathbb{R})} = 1$, $g(0) \neq 0$, and $\int_\mathbb{R} g(x)\,dx \neq 0$. Each $G_\gamma$ is called a \emph{time-frequency atom}.
\end{definition}
We now highlight the key contributions of this paper.

\paragraph*{Main contributions.}
Motivated by the panorama sketched above, this paper proposes a unified
variational perspective on time–frequency localisation and proves three
new results:

\begin{enumerate}[label=\textbf{C\arabic*}.]
  \item We introduce a scale–translation–invariant \emph{uncertainty
        functional} $U$ on a hierarchy of piece\-wise‑polynomial
        function classes and show that all variance–type uncertainty
        inequalities (Heisenberg, Breitenberger, quantitative Balian–Low)
        follow from the impossibility of attaining the minimum of~$U$.
  \item Inside two wavelet dictionaries
        $\{\mathcal G_{\gamma,n}\}_{n\in\mathbb N}$ and
        $\{\mathcal F_{\gamma,n}\}_{n\in\mathbb N}$ we prove that the
        \emph{tent function} ($n=1$) uniquely minimises $U$, achieving the
        optimal value $U=\frac{3}{10}$; this confirms and refines earlier
        empirical observations.
  \item For the family of $p$‑fold convolutions of the rectangular
        window $\operatorname{rect}^{\{p\}}$ we establish two–sided
        bounds showing that $U(\operatorname{rect}^{\{p\}})\searrow
        1/4$ as $p\to\infty$, hence the classical Heisenberg limit is
        asymptotically attainable within this smooth B‑spline scale. \textcolor{black}{This asymptotic B-spline perspective dovetails with recent descriptions of when spline atoms yield stable Gabor systems and when structural obstructions arise (\cite{ghosh2025gabor,ghosh2025obstructions}).}
\end{enumerate}

These results deliver a coherent explanation of why certain well‑known
atoms (Gaussians, tent functions, high‑order B‑splines) appear
repeatedly as “best localised” in both theory and practice, and they
provide concrete design principles for constructing optimally
concentrated frames in applied harmonic analysis.

\textcolor{black}{\paragraph*{Relation to prior work and positioning.}
Within this framework, our contribution takes a complementary position. Like the works on Gabor systems and the Balian–Low principles, we emphasize the role of invariance: our uncertainty product is explicitly constructed to remain stable under scaling and translation. At the same time, rather than emphasizing limitations of localization theorems, we adopt a constructive perspective, identifying explicit minimizers (the tent function and the self-convolutions of the rectangle) and showing that these approach the Heisenberg limit. In this respect, our findings resonate with the work of \cite{rivero2023solution, lebedeva2017inequality} and \cite{goh2000uncertainty}, who construct dictionaries and wavelets with near-optimal localization properties, while also complementing the conceptual analyses of uncertainty measures (\cite{breitenberger1985uncertainty}) and the structural or combinatorial formulations of localization constraints (\cite{caragea2023balian, enstad2020balian, aldahleh2025additive}).}

\textcolor{black}{Moreover, the recent study of \cite{ghosh2025gabor} on the frame sets of B-splines, Hermite functions, and totally positive functions highlights how explicit constructions can extend the admissible parameter regions for Gabor frames, which is closely related to our constructive approach to minimizers. Similarly, \cite{selvan2017optimal} analysis of stable sampling in shift-invariant spaces via Meyer wavelets provides a complementary perspective, where explicit gap conditions guarantee reconstruction, paralleling our focus on precise extremal functions. Finally, \cite{wang2024benedicks} work on uncertainty principles for the continuous quaternion wavelet transform broadens the landscape of invariance and localization, showing how multidimensional and non-commutative settings admit analogues of the Heisenberg, Benedicks, and Sobolev-type principles, further reinforcing the universality of the phenomena we describe. Along the same lines, the contributions of \cite{dades2024heisenberg, dades2025new} on the Mehler–Fock and free metaplectic wavelet transforms extend classical uncertainty principles, including the Donoho–Stark, Lieb, and logarithmic forms, to novel transform settings, thereby underscoring the robustness of uncertainty inequalities across different analytical frameworks. In a related direction, \cite{dar2024n} introduces the free metaplectic wave packet transform in $L^2(\mathbb{R}^n)$.
}

\textcolor{black}{\paragraph*{Organization of the paper.}
We collect the minimal definitions needed for our statements in Section \ref{sec:maindef} and then present the main results in Section \ref{sec:mainres}; proofs and auxiliary lemmas are deferred to Sections \ref{sec2} and \ref{sec3}.}

\textcolor{black}{\section{Main results}\label{sec:main}
The precise statements of our main theorems require a few basic notions.
For clarity, we collect these definitions in the next section, followed by
the enunciation of our main results.
\subsection{General Definitions}
\label{sec:maindef}
The idea is to work with a set of functions that allows for easy computation of Fourier coefficients, so consider a fixed compact support $ K \subseteq \mathbb{R} $. The most natural choice, then, is to consider functions that are sums of polynomials defined on intervals of $ \mathbb{R} $. In the following, the symbol $ \mathds{1}_{[a_i,b_i)} $ represents the indicator function.
\begin{definition} 
Let $ K = [-a,a] \subseteq \mathbb{R} $ be a compact symmetric set, with $a>0$. Then, we define the family of easy functions as follows:
\begin{equation}
\label{facilisuppK}
\begin{aligned}
    \mathcal{F}_{supp}(K) := \Biggl\{& f(x) = \sum_{i=1}^n P_i(x) \mathds{1}_{[a_i,b_i)}(x),\displaystyle\lim_{x \to a^{-}_i}f(x)=f(a_i), \; \forall i=2,...,n,   \; \\&\displaystyle\lim_{x \to b_n^-}f(x)=f(b_n) < +\infty, [a_i, b_i) \subseteq K, \;\\& -a = a_1 < b_1 = a_2 < \dots < b_{n-1} = a_n < b_n = a, \\
    &P_i(x) \text{ is a polynomial of any degree for each } i, \; \text{for some} \; n \in \mathbb{N} \Biggr\}.
\end{aligned}
\end{equation}
where $\mathds{1}_{A}(*)$ is the indicator function defined as $$\mathds{1}_{A}(x)=\begin{cases} 1 & \mbox{if } \; x \in A \\ 0 & \mbox{otherwise }
\end{cases}\,.$$
We also define three families:
\begin{align}
     \mathcal{F}^+_{supp}(K)&:=\{f \in \mathcal{F}_{supp}(K) : f(x) \ge 0 \; \forall x \in K\}
     \label{funioni+}, \\
     \mathcal{F}^+_{0}(K)&:=\{f \in \mathcal{F}^+_{supp}(K) : f(-a)=f(a)=0 \},
     \label{funzioni+0} \\
     \mathcal{P}^+_{0}(K)&:=\{f \in \mathcal{F}^+_{0}(K) : f \; \text{ is even} \}   .    \label{funzionipari}
\end{align}
\end{definition}
Figure \ref{fig:families} illustrates the four families of functions defined on the compact interval $K = [-2, 2]$, each with distinct properties as described in the caption.
\begin{figure}[hbt!]
    \centering
    \includegraphics[width=0.4\linewidth]{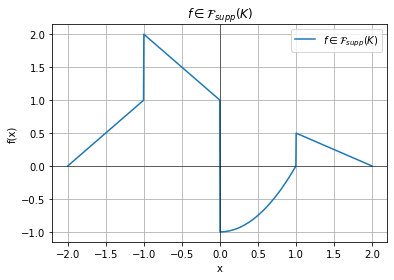}
    \includegraphics[width=0.4\linewidth]{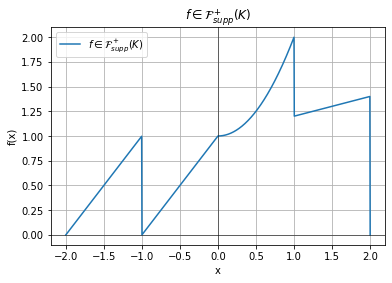}\\
    \includegraphics[width=0.4\linewidth]{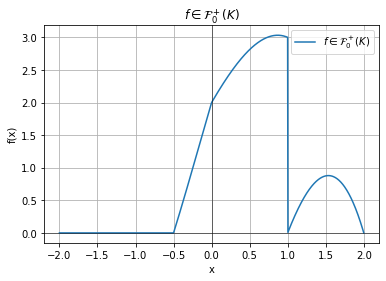}
    \includegraphics[width=0.4\linewidth]{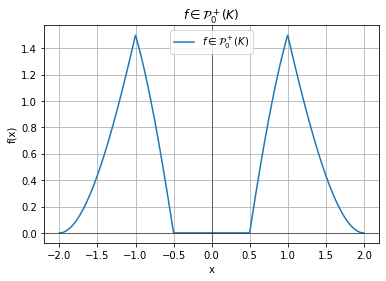}
    \caption{Graphs of the four families of functions defined on the compact interval $ K = [-2, 2] $: (a) $ \mathcal{F}_{supp}(K) $, sum of polynomials defined on disjoint intervals; (b) $ \mathcal{F}^+_{supp}(K) $, non-negative functions on $ K $; (c) $ \mathcal{F}^+_{0}(K) $, non-negative functions on $ K $ with zero values at the endpoints; (d) $ \mathcal{P}^+_{0}(K) $, even non-negative functions on $ K $ with zero values at the endpoints.}
    \label{fig:families}
\end{figure}}

\textcolor{black}{We observe that for all $ f \in \mathcal{F}_{supp}(K) $, by the Weierstrass Theorem, it follows that $ f \in L^1(K, Leb) \cap L^2(K, Leb) $, and therefore the Fourier transform is well-defined for every $ \omega \in \mathbb{R} $
\begin{equation}
\hat{f}(\omega):=\displaystyle\int_{\mathbb{R}}e^{-i\omega x}f(x) \; dx,
\label{trasformatafourierdef}
\end{equation} 
where $L^p(K,Leb)$ is the $L^p$ Banach space of functions whose modulus at $p$ is integrable over $K$ with the Lebesgue measure. Since we are considering always the Lebesgue measure, for brevity we will only indicate the set to be considered and not the measure ($L^p(K,Leb)=L^p(K)$).}

\textcolor{black}{Clearly, the Plancherel Theorem holds (\cite{rudin1987real}), and therefore
\begin{equation}
    \left \| \hat{f} \right \|_{L^2(\mathbb{R})}^2 = 2\pi \left \| f \right \|^2_{L^2(K)}.
    \label{plancherelnorme}
\end{equation}
\begin{definition}[Mean, Variance, and Uncertainty Operators]
    Let $ K \subseteq \mathbb{R} $ be a compact set. Let $ \mathcal{F}_{supp}(K) $ be the set in Eq. \eqref{facilisuppK} and
    $$\hat{\mathcal{F}}_{supp}(K) := \left\{ \hat{f} \mid \hat{f}(\omega) = \int_\mathbb{R} e^{-i\omega x} f(x) \; dx, \; f \in \mathcal{F}_{supp}(K) \right\}$$
    be the space of Fourier transforms.
 \begin{itemize}   
\item[a)] We define the mean operators in space and frequency:
\begin{equation}
\begin{array}{rcl}
    \alpha[\cdot]: \mathcal{F}_{supp}(K) & \to & \mathbb{R} \\
    f & \mapsto & \alpha[f]:= \displaystyle\frac{1}{ \left \| f \right \|^2} \int_K x |f(x)|^2 \; dx
    \label{mediaspazio}
\end{array}
\end{equation}
\begin{equation}
\begin{array}{rcl}
    \beta[\cdot]: \hat{\mathcal{F}}_{supp}(K) & \to & \mathbb{R} \\
    \hat{f} & \mapsto & \beta[\hat{f}]:= \displaystyle\frac{1}{2\pi \left \| f \right \|^2} \int_{\mathbb{R}} \omega |\hat{f}(\omega)|^2 \; d\omega
    \label{mediafrequenza}
\end{array}
\end{equation}
\item[b)] We define the variance operators in space and frequency:
\begin{equation}
\begin{array}{rcl}
    \sigma_x^2[\cdot]: \mathcal{F}_{supp}(K) & \to & \mathbb{R} \\
    f & \mapsto & \sigma_x^2[f]:=\displaystyle \frac{1}{ \left \| f \right \|^2} \int_K (x - \alpha[f])^2 |f(x)|^2 \; dx
    \label{varianzaspazio}
\end{array}
\end{equation}
\begin{equation}
\begin{array}{rcl}
    \sigma_{\omega}^2[\cdot]: \hat{\mathcal{F}}_{supp}(K) & \to & \mathbb{R} \; \cup \{+\infty\} \\
    \hat{f} & \mapsto & \sigma_{\omega}^2[\hat{f}]:=\displaystyle\frac{1}{2\pi \left \| f\right \|^2}\displaystyle\int_{\mathbb R} (\omega-\beta[\hat{f}])^2|\hat{f}(\omega)|^2 \; d\omega
\end{array}
\end{equation}
\item[c)] We define the Uncertainty Operator (Heisenberg's Uncertainty Principle) as follows:
\begin{equation}
    \begin{array}{rcl}
    U[\cdot]: \mathcal{F}_{supp}(K) & \to & \mathbb{R} \; \cup \{+\infty\} \\
    f & \mapsto & U[f]:= \sigma_x^2[f] \cdot \sigma_{\omega}^2[\hat{f}]
    \label{incertezza}
    \end{array}
\end{equation}
\end{itemize}
Analogous definitions hold for $\mathcal{F}^+_{supp}(K)$, $\mathcal{F}^+_{0}(K)$, and $\mathcal{P}^+_{0}(K)$ defined in Eqs. \eqref{funioni+},\eqref{funzioni+0},\eqref{funzionipari}.
\end{definition}
Concerning Theorem \ref{th:infimumevenf}, we define the functions  
\begin{equation}
\label{eq:simmetryfunctions}
f_d(x) :=  
\begin{cases}  
f(x) & \text{if } x \geq 0 \\  
f(-x) & \text{if } x < 0  
\end{cases}  \, , \quad 
f_s(x) :=  
\begin{cases}  
f(x) & \text{if } x < 0 \\  
f(-x) & \text{if } x \geq 0  
\end{cases}    
\end{equation}  
which are, respectively, the right- and left-reflected versions of $ f $ with respect to the $ y $-axis.}

\textcolor{black}{As for Theorems \ref{th:tent1} and \ref{th:tent2}, instead, we need two wavelet dictionaries that will serve as the basis  for our analysis of the uncertainty over the three specific cases considered in this paper:
\begin{align}
\{\mathcal{G}_{\gamma,n}\}_{\gamma \in \Gamma} :&= \left\{ \sqrt{\frac{2n+1}{2t}} \left(1 - \left| \frac{x - u}{t} \right| \right)^n \mathds{1}_{[u - t, u + t]}(x) \, e^{2\pi i \xi x}, \; n \in \mathbb{N} \right\}_{\gamma \in \Gamma} \notag \\
    &= \left\{ g_n\left( \frac{x - u}{t} \right) e^{2\pi i \xi x} \right\}_{\gamma \in \Gamma}
    \label{dizionario1}    
\end{align}
\begin{align}
\label{dizionario2}
    \{\mathcal{F}_{\gamma,n}\}_{\gamma \in \Gamma} :&= \left\{ \sqrt{\frac{(2n+1)(n+1)}{4n^2 t}} \left(1 - \left| \frac{x - u}{t} \right|^n \right) \mathds{1}_{[u - t, u + t]}(x) \, e^{2\pi i \xi x}, \; n \in \mathbb{N}_{> 0} \right\}_{\gamma \in \Gamma} \notag \\
    &= \left\{ f_n\left( \frac{x - u}{t} \right) e^{2\pi i \xi x} \right\}_{\gamma \in \Gamma}
\end{align}
For $ n = 0 $, the function coincides with $ g_0(x) $.}

\textcolor{black}{We define the basic (non-modulated) functions as follows:
\begin{equation}
    g_n(x) := \sqrt{\frac{2n+1}{2t}} \left(1 - |x| \right)^n \, \mathds{1}_{[-1,1]}(x),
    \label{definizioneunog}
\end{equation}
\begin{equation}
    f_n(x) := \sqrt{\frac{(2n+1)(n+1)}{4n^2 t}} \left(1 - |x|^n \right) \, \mathds{1}_{[-1,1]}(x).
    \label{definizioneduef}
\end{equation}
It is straightforward to verify the normalization of both families:
\begin{align}
    \|g_n\|^2 = \int_{u - t}^{u + t} \left| g_n\left( \frac{x - u}{t} \right) \right|^2 \, dx = \frac{2n+1}{2} \int_{-1}^{1} (1 - |x|)^{2n} \, dx = 1, \quad \forall n \in \mathbb{N},
    \label{norm_gn_eng}
\end{align}
\begin{align}
    \|f_n\|^2 &= \int_{u - t}^{u + t} \left| f_n\left( \frac{x - u}{t} \right) \right|^2 \, dx \notag \\
              &= \frac{(2n+1)(n+1)}{4n^2} \int_{-1}^{1} \left(1 - |x|^n \right)^2 \, dx = 1, \quad \forall n \in \mathbb{N}_{>0}.
    \label{norm_fn_eng}
\end{align}
Naturally, both $ g_n(x) $ and $ f_n(x) $ belong to the class $ P^+_0([-1,1]) $ for all $ n \in \mathbb{N} $.}

\textcolor{black}{Lastly, concerning Theorem \ref{th:Urectp}, we consider the rectangular function $ \operatorname{rect}^{\{1\}}(x) := \operatorname{rect}(x) = \mathds{1}_{[-\frac{1}{2}, \frac{1}{2}]}(x) $, and define its $ p $-fold convolution recursively for $ p \ge 2 $ as \cite{schoenberg1946contributions,schoenberg1969cardinal,de2018stability}
$$
\operatorname{rect}^{\{p\}}(x) := \int_{x - \frac{1}{2}}^{x + \frac{1}{2}} \operatorname{rect}^{\{p-1\}}(s) \, ds.
$$
In other words, $ \operatorname{rect}^{\{p\}}(x) $ denotes the $ p $-th convolution of the rectangular function with itself.
}

\textcolor{black}{\subsection{Statements of the main theorems}
\label{sec:mainres}
\begin{Thm}[The Infimum is Attained by Even Functions]
\label{th:infimumevenf}
Let $a>0$ and $f\in\mathcal F_0^+([-a,a])$ be absolutely continuous with $f'\in L^2(-a,a)$. Let $f_s,f_d$ be the left/right reflections of $f$ \emph{about its barycenter $\alpha[f]$} (equivalently, after translating so that $\alpha[f]=0$). Let also $w\in[0,1]$ be the corresponding half–mass weight. Then, for the uncertainty product,
\begin{equation}\label{eq:cs-bound}
U[f]\ \ge\ \Big(w\,\sqrt{U[f_d]}+(1-w)\,\sqrt{U[f_s]}\Big)^2.
\end{equation}
In particular,
\begin{equation}\label{eq:min-bound}
U[f]\ \ge\ \min\{\,U[f_s],\,U[f_d]\,\}\qquad\text{for every }f,
\end{equation}
with equality in \eqref{eq:cs-bound} if and only if $\sigma_x^2[f_d] \sigma_\omega^2[\widehat{f_s}]= \sigma_x^2[f_s] \sigma_\omega^2[\widehat{f_d}]$ (Cauchy–Schwarz equality case).
\end{Thm}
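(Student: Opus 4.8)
The plan is to express both $\sigma_x^2[f]$ and $\sigma_\omega^2[\widehat f]$ as the \emph{same} convex combination, with weights $w$ and $1-w$, of the corresponding quantities for $f_d$ and $f_s$, after which \eqref{eq:cs-bound} drops out of the two–term Cauchy--Schwarz inequality. First I would use the translation invariance of $\sigma_x^2$ and $\sigma_\omega^2$ (hence of $U$), already established for these classes, to reduce to $\alpha[f]=0$ (extending $f$ by zero to a symmetric interval if needed, which affects none of the quantities involved); let $[p,q]$ with $p<0<q$ be the support of $f$ and put $w:=\|f\|^{-2}\int_0^q|f|^2$, so $1-w=\|f\|^{-2}\int_p^0|f|^2$. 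Straight from \eqref{eq:simmetryfunctions} one gets $\|f_d\|^2=2w\|f\|^2$ and $\|f_s\|^2=2(1-w)\|f\|^2$; since $f_d,f_s$ are even, $\alpha[f_d]=\alpha[f_s]=0$, and since $f,f_d,f_s$ are (nonnegative, hence) real, $\beta[\widehat f]=\beta[\widehat{f_d}]=\beta[\widehat{f_s}]=0$.

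The decisive ingredient is the identity $\sigma_\omega^2[\widehat g]=\|g'\|_{L^2}^2/\|g\|^2$, valid for every absolutely continuous $g\in\mathcal{F}_0^+$ with $g'\in L^2$: because $g$ vanishes at the endpoints of its support one has $\widehat{g'}(\omega)=i\omega\,\widehat g(\omega)$, and Plancherel \eqref{plancherelnorme} together with $\beta[\widehat g]=0$ then gives the formula. I would check that $f_d$ and $f_s$ inherit the hypotheses of this identity — continuity at $0$ from continuity of $f$, finiteness of $\|f_d'\|_{L^2},\|f_s'\|_{L^2}$ from $f'\in L^2$, and vanishing at the (symmetric) endpoints from $f(p)=f(q)=0$ — so the formula applies to $f$, $f_d$ and $f_s$ alike. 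Splitting every integral at $0$ and using $|f_d'(-x)|^2=|f'(x)|^2$ on $(0,q)$ (and its mirror for $f_s$), one obtains $\int_0^q x^2|f|^2=w\|f\|^2\sigma_x^2[f_d]$ and $\int_0^q|f'|^2=w\|f\|^2\sigma_\omega^2[\widehat{f_d}]$, together with the analogous pair on $(p,0)$ involving $f_s$; dividing by $\|f\|^2$ gives
\[
\sigma_x^2[f]=w\,\sigma_x^2[f_d]+(1-w)\,\sigma_x^2[f_s],\qquad
\sigma_\omega^2[\widehat f]=w\,\sigma_\omega^2[\widehat{f_d}]+(1-w)\,\sigma_\omega^2[\widehat{f_s}].
\]

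Multiplying these two and applying $(a_1^2+a_2^2)(b_1^2+b_2^2)\ge(a_1b_1+a_2b_2)^2$ with $a_1=\sqrt{w\,\sigma_x^2[f_d]}$, $a_2=\sqrt{(1-w)\,\sigma_x^2[f_s]}$, $b_1=\sqrt{w\,\sigma_\omega^2[\widehat{f_d}]}$, $b_2=\sqrt{(1-w)\,\sigma_\omega^2[\widehat{f_s}]}$ turns the left side into $U[f]$ and the right side into $\big(w\sqrt{U[f_d]}+(1-w)\sqrt{U[f_s]}\big)^2$, which is \eqref{eq:cs-bound}; the Cauchy--Schwarz equality condition $a_1b_2=a_2b_1$ squares and simplifies, after cancelling $w(1-w)>0$, to precisely $\sigma_x^2[f_d]\,\sigma_\omega^2[\widehat{f_s}]=\sigma_x^2[f_s]\,\sigma_\omega^2[\widehat{f_d}]$, and \eqref{eq:min-bound} follows since a weighted average of $\sqrt{U[f_d]}$ and $\sqrt{U[f_s]}$ dominates their minimum and squaring preserves the inequality. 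The degenerate values $w\in\{0,1\}$ cannot occur, since they would force the whole support of $f$ onto one side of $0$, contradicting $\alpha[f]=0$ for $f\not\equiv0$, and the finiteness of $U[f_d],U[f_s]$ is automatic from $f_d',f_s'\in L^2$. I expect the only step needing genuine care to be the Fourier–derivative identity $\widehat{g'}=i\omega\widehat g$ with no boundary contribution (which is exactly where the $\mathcal{F}_0^+$ endpoint condition is used) and the accompanying check that $f_d,f_s$ remain admissible; the rest is bookkeeping.
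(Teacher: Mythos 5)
Your proposal is correct and follows essentially the same route as the paper's proof: recentre at the barycenter, decompose both $\sigma_x^2[f]$ and $\sigma_\omega^2[\hat f]$ as the same $w$-weighted convex combination of the reflected quantities (using $\sigma_\omega^2[\hat g]=\|g'\|^2/\|g\|^2$ via Plancherel on the frequency side), and conclude by two-term Cauchy--Schwarz, whose equality case reduces to $\sigma_x^2[f_d]\,\sigma_\omega^2[\widehat{f_s}]=\sigma_x^2[f_s]\,\sigma_\omega^2[\widehat{f_d}]$ after cancelling $w(1-w)$. Your derivation of $\beta[\hat f]=\beta[\widehat{f_d}]=\beta[\widehat{f_s}]=0$ directly from realness of the functions is a slight (and valid) shortcut compared with the paper's appeal to its Lemma on the evenness of $|\hat f|^2$, but the substance is the same.
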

We now state two theorems ensuring that the tent function used in \cite{mazzoccoli2024refining} is indeed a minimizer.
\begin{Thm}[The Tent Function Minimizes the Uncertainty Within the Family $\mathcal{G}_{\gamma,n}$]
\label{th:tent1}
Let $ \{\mathcal{G}_{\gamma,n}\} $ be the wavelet dictionary defined in equation~\eqref{definizioneunog}. Then, the following holds:
\begin{equation}
\inf_{g_n \in \{\mathcal{G}_{\gamma,n}\}_{n \in \mathbb{N}}} U[g_n] 
= \min_{g_n \in \{\mathcal{G}_{\gamma,n}\}_{n \in \mathbb{N}}} U[g_n] 
= U[g_1] = \frac{3}{10},
\label{infètriangolo1}
\end{equation}
where the minimum is attained by the function
\begin{equation}
\operatorname*{argmin}_{g_n \in \{\mathcal{G}_{\gamma,n}\}_{n \in \mathbb{N}}} U[g_n] 
= g_1(x) = \sqrt{\frac{3}{2t}} \left(1 - |x| \right) \, \mathds{1}_{[-1,1]}(x).
\end{equation}
Moreover,
\begin{equation}
\sup_{g_n \in \{\mathcal{G}_{\gamma,n}\}_{n \in \mathbb{N}}} U[g_n] = \frac{1}{2},
\label{supètriangolo1}
\end{equation}
which is asymptotically attained by the Dirac delta function.
\end{Thm}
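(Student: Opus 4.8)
The plan is to reduce the statement to an explicit scalar sequence and then to a polynomial inequality. First I would use the dilation and translation invariance of $U$ proved earlier (and note that modulation by $e^{2\pi i\xi x}$ leaves $\sigma_x^2$ and $\sigma_\omega^2$ unchanged), which lets me fix $t=1$, $u=0$, $\xi=0$ and work with $g_n(x)=\sqrt{\tfrac{2n+1}{2}}\,(1-|x|)^n\mathds 1_{[-1,1]}(x)$, $\|g_n\|_{L^2}=1$. Since $g_n$ is real and even, $\alpha[g_n]=0$ and $\widehat{g_n}$ is real and even, so $\beta[\widehat{g_n}]=0$; hence $\sigma_x^2[g_n]=\int_{-1}^1 x^2 g_n(x)^2\,dx$ and $\sigma_\omega^2[\widehat{g_n}]=\frac{1}{2\pi}\int_{\mathbb R}\omega^2|\widehat{g_n}(\omega)|^2\,d\omega$.

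Next I would compute both variances in closed form. The space variance is a Beta integral: $\sigma_x^2[g_n]=(2n+1)\int_0^1 x^2(1-x)^{2n}\,dx=\dfrac{1}{(n+1)(2n+3)}$. For the frequency variance I would observe that for $n\ge1$ the function $g_n$ is Lipschitz (hence absolutely continuous) on $[-1,1]$, vanishes at $\pm1$, and has $g_n'\in L^2$; integration by parts then gives $\widehat{g_n'}(\omega)=i\omega\,\widehat{g_n}(\omega)$ with no boundary term, and Plancherel \eqref{plancherelnorme} turns the frequency variance into a Dirichlet energy, $\sigma_\omega^2[\widehat{g_n}]=\|g_n'\|_{L^2}^2=\dfrac{n^2(2n+1)}{2n-1}$. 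Multiplying the two gives the closed form
\[
U[g_n]=\frac{n^2(2n+1)}{(n+1)(2n-1)(2n+3)}=:h(n),\qquad n\ge 1,
\]
while $U[g_0]=+\infty$ because the rectangle $g_0$ has non-integrable $\omega^2|\widehat{g_0}(\omega)|^2$; thus $n=0$ plays no role in the infimum.

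The remaining step is a study of the scalar sequence $h$. For the minimum I would note that $h(n)\ge\tfrac{3}{10}$ is equivalent, after clearing the (positive, for $n\ge1$) denominator, to $8n^3-14n^2-3n+9\ge0$, and I would factor this as $(n-1)(8n^2-6n-9)$; since the positive root of $8n^2-6n-9$ equals $3/2$, the second factor is strictly positive on the integers $n\ge2$, so the product is $\ge0$ for all integers $n\ge1$ and vanishes only at $n=1$. This yields $\min_{n\ge1}U[g_n]=U[g_1]=\tfrac{3}{10}$, uniquely attained by the tent $g_1$. For the supremum I would use $\tfrac{1}{2}-h(n)=\dfrac{6n^2+n-3}{2(n+1)(2n-1)(2n+3)}>0$ for every $n\ge1$, together with $h(n)\to\tfrac{1}{2}$ as $n\to\infty$ (from the leading cubic terms $2n^3$ over $4n^3$); since the $g_n$ concentrate at the origin as $n\to\infty$ — the ``Dirac delta'' regime referred to in the statement — the value $\tfrac{1}{2}$ is approached but never attained.

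I do not anticipate a deep obstacle; the proof is essentially a controlled computation. The two points requiring care are: (i) the identity $\sigma_\omega^2[\widehat{g_n}]=\|g_n'\|_{L^2}^2$, where one must justify the absolute continuity of $g_n$ and the vanishing of the boundary term $\bigl[e^{-i\omega x}g_n(x)\bigr]_{-1}^{1}$ — which holds precisely because $g_n(\pm1)=0$ for $n\ge1$, whereas it fails for $n=0$, consistently with $U[g_0]=+\infty$; and (ii) the minimisation step, i.e.\ checking that the cubic $(n-1)(8n^2-6n-9)$ is nonnegative on $n\ge1$ and strictly positive for $n\ge2$, which is what pins $g_1$ down as the \emph{unique} minimiser.
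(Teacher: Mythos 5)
Your proposal is correct and follows essentially the same route as the paper: reduce to $t=1$, $u=0$, $\xi=0$ by the invariance lemma, compute $\sigma_x^2[g_n]=\frac{1}{(n+1)(2n+3)}$ via a Beta integral and $\sigma_\omega^2[\widehat{g_n}]=\|g_n'\|_{L^2}^2=\frac{n^2(2n+1)}{2n-1}$ via Plancherel, and analyse the resulting rational sequence $U[g_n]=\frac{n^2(2n+1)}{(n+1)(2n-1)(2n+3)}$. Your explicit factorisation $8n^3-14n^2-3n+9=(n-1)(8n^2-6n-9)$ and the bound $\frac12-h(n)=\frac{6n^2+n-3}{2(n+1)(2n-1)(2n+3)}>0$ actually supply the monotonicity/extremality details that the paper only asserts, so the argument is, if anything, slightly more complete.
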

\begin{Thm}[The Tent Function Minimizes the Uncertainty Within the Family $\mathcal{F}_{\gamma,n}$]
\label{th:tent2}
Let $ \{\mathcal{F}_{\gamma,n}\} $ be the wavelet dictionary defined in equation~\eqref{definizioneduef}.
Then, the uncertainty functional satisfies the following:
\begin{equation}
\inf_{f_n \in \{\mathcal{F}_{\gamma,n}\}_{n \in \mathbb{N}}} U[f_n] = \min_{f_n \in \{\mathcal{F}_{\gamma,n}\}_{n \in \mathbb{N}}} U[f_n] = U[f_1] = \frac{3}{10},
\label{infètriangolo1}
\end{equation}
where the minimum is attained by the function
\begin{equation}
\operatorname*{argmin}_{f_n \in \{\mathcal{F}_{\gamma,n}\}_{n \in \mathbb{N}}} U[f_n] = f_1(x) = \sqrt{\frac{3}{2t}} \left(1 - |x| \right) \, \mathds{1}_{[-1,1]}(x).
\end{equation}
Moreover, the uncertainty tends to grow linearly in the asymptotic regime:
\begin{equation}
\sup_{f_n \in \{\mathcal{F}_{\gamma,n}\}_{n \in \mathbb{N}}} U[f_n] \sim \frac{n}{6}\quad \text{as } n \to +\infty,
\label{supètriangolo2}
\end{equation}
and this supremum is effectively attained when the family is extended to include the rectangular function $ f_0(x) := \operatorname{rect}(x) $.
\end{Thm}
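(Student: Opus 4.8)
The plan is to reduce the whole statement to an elementary moment computation followed by a single polynomial inequality. By the scaling and translation invariance of $U$ established earlier — together with the immediate modulation invariance, since $|f\,e^{2\pi i\xi x}|^2=|f|^2$ and a Fourier shift does not change $\sigma_\omega^2$ — the value of $U$ on any element of the dictionary $\{\mathcal F_{\gamma,n}\}$ is independent of $\gamma\in\Gamma$ and equals $U[f_n]$ for the normalized basic window $f_n(x)=c_n\bigl(1-|x|^n\bigr)\mathds{1}_{[-1,1]}(x)$ with $c_n^2=\tfrac{(2n+1)(n+1)}{4n^2}$ (set $t=1$ in \eqref{definizioneduef}). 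Since $f_n$ is even, $\alpha[f_n]=0$, and since it is real and even, $\widehat{f_n}$ is real and even, so $\beta[\widehat{f_n}]=0$; hence $U[f_n]=\sigma_x^2[f_n]\,\sigma_\omega^2[\widehat{f_n}]$, with both factors being second moments about the origin.

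First I would compute the two moments in closed form. The spatial one is the polynomial integral $\sigma_x^2[f_n]=c_n^2\int_{-1}^{1}x^2(1-|x|^n)^2\,dx$; expanding and integrating term by term gives $\int_0^1 x^2(1-x^n)^2\,dx=\tfrac13-\tfrac{2}{n+3}+\tfrac1{2n+3}=\tfrac{2n^2}{3(n+3)(2n+3)}$, hence $\sigma_x^2[f_n]=\tfrac{(n+1)(2n+1)}{3(n+3)(2n+3)}$. For the frequency moment I would observe that for every $n\ge1$ the window $f_n$, extended by zero, is Lipschitz on $\mathbb R$ and vanishes at $\pm1$, hence is absolutely continuous with $f_n'\in L^2(\mathbb R)$; then $\widehat{f_n'}(\omega)=i\omega\,\widehat{f_n}(\omega)$ with no boundary contribution, and Plancherel \eqref{plancherelnorme} yields $\sigma_\omega^2[\widehat{f_n}]=\|f_n'\|^2=c_n^2 n^2\int_{-1}^{1}|x|^{2n-2}\,dx=\tfrac{(n+1)(2n+1)}{2(2n-1)}$. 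Multiplying the two,
\[
U[f_n]=\frac{(n+1)^2(2n+1)^2}{6(2n-1)(n+3)(2n+3)},\qquad\text{so}\qquad U[f_1]=\frac{3}{10}.
\]

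Next I would carry out the minimization. The estimate $U[f_n]\ge\tfrac3{10}$ is equivalent to $5(n+1)^2(2n+1)^2-9(2n-1)(n+3)(2n+3)\ge0$, and expanding both products reduces this to the polynomial inequality $20n^4+24n^3-79n^2-51n+86\ge0$, which vanishes at $n=1$ and factors as $(n-1)\bigl(20n^3+44n^2-35n-86\bigr)$. The cubic factor has positive derivative on $[1,\infty)$ (indeed $60n^2+88n-35>0$ there) and equals $180>0$ at $n=2$, so it is positive for every integer $n\ge2$; consequently the product is $\ge0$ for all integers $n\ge1$, with equality exactly at $n=1$. This gives $\min_{n\ge1}U[f_n]=U[f_1]=\tfrac3{10}$, attained only at the tent function $f_1(x)=\sqrt{\tfrac3{2t}}(1-|x|)\mathds{1}_{[-1,1]}(x)$, which is the first assertion of the theorem. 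For the asymptotics, dividing numerator and denominator of $U[f_n]$ by $n^3$ gives $U[f_n]=\tfrac{4n^4+O(n^3)}{24n^3+O(n^2)}\sim\tfrac n6$ as $n\to\infty$, proving \eqref{supètriangolo2}. Finally, for the extended member $f_0=\operatorname{rect}$ the window is not absolutely continuous (it jumps at its endpoints, so its distributional derivative is a difference of Dirac masses, not in $L^2$), equivalently $\widehat{\operatorname{rect}}$ decays only like $1/\omega$, so $\int_{\mathbb R}\omega^2|\widehat{\operatorname{rect}}(\omega)|^2\,d\omega=+\infty$ and $\sigma_\omega^2[\widehat{f_0}]=U[f_0]=+\infty$; this is precisely the value the supremum realises once $n=0$ is admitted, and it is consistent with $f_n\to\tfrac1{\sqrt2}\mathds{1}_{(-1,1)}$ in $L^2$ (a rescaled rectangle) as $n\to\infty$.

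The step I expect to be the main obstacle is the discrete minimization: the map $n\mapsto U[f_n]$ is \emph{not} monotone on $[1,\infty)$ — for real $n$ slightly larger than $1$ it dips below $\tfrac3{10}$ before increasing — so one cannot conclude from the sign of a derivative, and one genuinely needs the explicit factorization above together with the observation that the cubic factor, although negative at $n=1$, is positive at every integer $n\ge2$. Everything else is routine: the two moments are polynomial integrals, and the only analytic input, the identity $\sigma_\omega^2[\widehat{f_n}]=\|f_n'\|^2$, is justified by the absolute continuity of the zero-extended windows for $n\ge1$ (and fails precisely at $n=0$, which is why $\operatorname{rect}$ realises the infinite supremum).
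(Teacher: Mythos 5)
Your proposal is correct and arrives at exactly the paper's closed form $U[f_n]=\frac{(2n+1)^2(n+1)^2}{6(2n-1)(2n^2+9n+9)}$, but it differs from the paper's treatment in two worthwhile ways. First, the paper routes the frequency variance through the auxiliary cosine transform $H_n(\eta)=\int_0^1(1-y^n)\cos(\eta y)\,dy$ and Parseval (Proposition~\ref{prop:PV2}), whereas you reach the same number directly from $\sigma_\omega^2[\widehat{f_n}]=\|f_n'\|_2^2$, justified by the absolute continuity of the zero‑extended window for $n\ge 1$; these are the same identity, and your route is the shorter one. Second, and more substantively, the paper's proof of the minimisation is essentially the assertion that the sequence is increasing, ``by similar arguments'' to the $\mathcal G_{\gamma,n}$ case, while you reduce $U[f_n]\ge\tfrac{3}{10}$ to the polynomial inequality $20n^4+24n^3-79n^2-51n+86\ge 0$ and factor it as $(n-1)(20n^3+44n^2-35n-86)$, checking that the cubic (which equals $-57$ at $n=1$) is increasing on $[1,\infty)$ and positive at $n=2$. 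This closes a genuine gap: as you observe, $U[f_n]$ regarded as a function of a real parameter dips below $\tfrac{3}{10}$ for $n$ slightly above $1$, so it is \emph{not} the restriction of a function increasing on $[1,\infty)$, and a derivative/monotonicity argument of the kind the paper gestures at cannot work without modification; your integer‑by‑integer polynomial argument is the correct fix and also yields uniqueness of the minimiser. The asymptotics $U[f_n]\sim n/6$ and the divergence $\sigma_\omega^2[\widehat{\operatorname{rect}}]=+\infty$ for the extended member $f_0$ are handled the same way in both treatments.
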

We now conclude by showing that the following theorem holds:
\begin{Thm}
\label{th:Urectp}
For all $p \ge 2$, there exist constants $c_1, c_2, c_3, c_4 \in \mathbb{R} \setminus \{0\}$, with $c_2 > c_1$ and $c_4 > c_3$, such that:
\begin{equation}
6\frac{\sqrt{p + c_2}}{(p + c_4)^{3/2}} \le \nu_p \le 6\frac{\sqrt{p + c_1}}{(p + c_3)^{3/2}} \label{disrapportofrequenze}
\end{equation}
and there exist constants $d_1, d_2, d_3, d_4 \in \mathbb{R} \setminus \{0\}$, with $d_2 > d_1$ and $d_4 > d_3$, such that:
\begin{equation}
\frac{\sqrt{(p + d_2)(p + d_3)}}{24} \le u_p \le \frac{\sqrt{(p + d_1)(p + d_4)}}{24} .\label{disrapportospaziale}
\end{equation}
As a result, the uncertainty of the function $\operatorname{rect}^{\{p\}}(x)$ satisfies:
\begin{equation}
\frac{1}{4} \cdot \frac{\sqrt{(p + d_2)(p + d_3)(p + c_2)}}{(p + c_4)^{3/2}} \le U(\operatorname{rect}^{{p}}) \le \frac{1}{4} \cdot \frac{\sqrt{(p + d_1)(p + d_4)(p + c_1)}}{(p + c_3)^{3/2}}. \label{disinceetezza}
\end{equation}
In particular, $\displaystyle\lim_{p \to +\infty} U(\operatorname{rect}^{\{p\}}) = \frac{1}{4}$, which is its infimum.
\label{teo2.6}
\end{Thm}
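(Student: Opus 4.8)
The plan rests on two structural facts: $\operatorname{rect}^{\{p\}}$, being a $p$‑fold convolution of the even function $\operatorname{rect}$, is itself even, and its Fourier transform is $\widehat{\operatorname{rect}^{\{p\}}}(\omega)=\operatorname{sinc}^p(\omega/2)$, where $\operatorname{sinc}(v):=\sin v/v$. Evenness forces the barycentres $\alpha[\operatorname{rect}^{\{p\}}]$ and $\beta[\widehat{\operatorname{rect}^{\{p\}}}]$ to vanish, so by~\eqref{incertezza} it suffices to control the two quotients
\[
u_p:=\sigma_x^2[\operatorname{rect}^{\{p\}}]=\frac{\int_{\mathbb R}x^2\operatorname{rect}^{\{p\}}(x)^2\,dx}{\|\operatorname{rect}^{\{p\}}\|^2},\qquad
\nu_p:=\sigma_\omega^2[\widehat{\operatorname{rect}^{\{p\}}}]=\frac{\int_{\mathbb R}\omega^2\operatorname{sinc}^{2p}(\omega/2)\,d\omega}{\int_{\mathbb R}\operatorname{sinc}^{2p}(\omega/2)\,d\omega},
\]
after which $U(\operatorname{rect}^{\{p\}})=u_p\,\nu_p$. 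The hypothesis $p\ge2$ enters precisely here, since $\omega^2\operatorname{sinc}^{2p}(\omega/2)=O(\omega^{2-2p})$ is integrable at infinity exactly when $p\ge2$ (for $p=1$ the frequency variance is already $+\infty$).

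Next I would translate both quotients into Schoenberg's B‑spline calculus. Writing $h_n:=\operatorname{rect}^{\{n\}}$, Plancherel~\eqref{plancherelnorme} gives $\|h_p\|^2=\frac1{2\pi}\int\operatorname{sinc}^{2p}(\omega/2)\,d\omega$, which by the inversion formula $\int\hat g\,d\omega=2\pi g(0)$ equals $h_{2p}(0)$ (equivalently $h_p*h_p=h_{2p}$); the companion identity $\int\omega^2\hat g\,d\omega=-2\pi g''(0)$ applied to $g=h_{2p}$ turns the quotient defining $\nu_p$ into $\nu_p=-h_{2p}''(0)/h_{2p}(0)$, and the differentiation rule $h_{2p}''=\Delta^2h_{2p-2}$, with $(\Delta^2g)(x):=g(x+1)-2g(x)+g(x-1)$, rewrites this as $\nu_p=2\bigl(h_{2p-2}(0)-h_{2p-2}(1)\bigr)/h_{2p}(0)$. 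For $u_p$, Parseval applied to $x\,h_p$ (whose transform is $i\,\partial_\omega\widehat{h_p}$, with $\partial_\omega\operatorname{sinc}^p(\omega/2)$ simplified via $v\operatorname{sinc}'(v)=\cos v-\operatorname{sinc}(v)$ and the scaling identity $\bigl(\mathds 1_{[-1,1]}\bigr)^{*(2p-2)}(t)=2^{2p-3}h_{2p-2}(t/2)$) reduces $\int x^2h_p^2\,dx$ to a fixed rational combination of the truncated moments $M_k:=\int_{-1}^1|x|^k h_{2p-2}(x)\,dx$, giving $u_p=p^2(2M_3-3M_1+M_0)/\bigl(12\,h_{2p}(0)\bigr)$. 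Both formulas can be sanity‑checked at $p=2$, where $\operatorname{rect}^{\{2\}}$ is the tent function: they return $\nu_2=3$, $u_2=\tfrac1{10}$, $U=\tfrac3{10}$, consistent with Theorems~\ref{th:tent1}--\ref{th:tent2}. At this stage $u_p$ and $\nu_p$ are explicit functionals of the central B‑spline data $h_{2p}(0)$, $h_{2p-2}(0)$, $h_{2p-2}(1)$ and of $M_0,M_1,M_3$.

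The heart of the proof, and its main obstacle, is to exhibit constants $c_i,d_i$ making the clean two‑sided estimates~\eqref{disrapportofrequenze}--\eqref{disrapportospaziale} valid for \emph{every} $p\ge2$, not merely asymptotically. The leading behaviour $\nu_p\sim6/p$, $u_p\sim p/24$ (hence $U\to\tfrac14$) is immediate from the local central limit theorem: up to $O(1/n)$ corrections $h_n$ is the Gaussian density of variance $n/12$, so $h_n(0)\sim\sqrt{6/(\pi n)}$ and $h_n''(0)\sim-(12/n)h_n(0)$, and after the cancellation of leading terms in $2M_3-3M_1+M_0$ (which collapses it to $-\tfrac1{12}h_{2p-2}''(0)+O(n^{-5/2})$) one recovers the stated growth orders. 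To upgrade this to honest inequalities I would start from the exact Schoenberg representations $h_{2p}(0)=\tfrac1{(2p-1)!}\sum_{j\ge0}(-1)^j\binom{2p}{j}(p-j)_+^{2p-1}$ and their analogues for $h_{2p-2}(1)$ and $M_k$, bounding each by isolating the $j=0$ term and geometrically dominating the alternating tail — equivalently, estimating $\int v^{2m}\operatorname{sinc}^{2p}(v)\,dv$ by Laplace's method using $\operatorname{sinc} v\le e^{-v^2/6}$ and $\operatorname{sinc} v\ge1-v^2/6$ on $[-\sqrt6,\sqrt6]\subset[-\pi,\pi]$ together with an exponentially small bound for the mass outside that window — and absorbing the resulting factors $1+O(1/p)$ into the shifts $p\mapsto p+c_i$, $p\mapsto p+d_i$ inside the square roots (the finitely many small values $p=2,3$, where some B‑splines are too rough for the general estimate, are checked by hand). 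This non‑asymptotic accounting of the B‑spline remainders is what requires care; everything else is mechanical.

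Finally, multiplying~\eqref{disrapportofrequenze} by~\eqref{disrapportospaziale} and using $U(\operatorname{rect}^{\{p\}})=u_p\nu_p$ yields~\eqref{disinceetezza}, both of whose bounds tend to $\tfrac14$ as $p\to\infty$, so the squeeze theorem gives $\lim_{p\to\infty}U(\operatorname{rect}^{\{p\}})=\tfrac14$. Since the classical Heisenberg inequality $U\ge\tfrac14$ — the lower‑bound half of contribution~\textbf{C1} — holds for every admissible window while no $\operatorname{rect}^{\{p\}}$ is Gaussian, one has $U(\operatorname{rect}^{\{p\}})>\tfrac14$ for each finite $p$; hence $\tfrac14$ is the infimum of the family and is approached but never attained.
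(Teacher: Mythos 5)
Your proposal is correct in substance and reaches the same two-sided sandwich $\nu_p\sim 6/p$, $u_p\sim p/24$, $U\to\tfrac14$ that the paper establishes, but it is organised differently enough to be worth contrasting. The paper works entirely on the frequency side: it writes $\nu_p$ and (via Plancherel, applied both to $\operatorname{rect}^{\{p\}}$ and to $x\cdot\operatorname{rect}^{\{p\}}$) $u_p$ as ratios of the moment integrals $\int\omega^{2m}\operatorname{sinc}^{2p}(\omega/2)\,d\omega$ and of $\int\operatorname{sinc}^{2p}(\omega/2)\bigl[\partial_\omega\operatorname{sinc}(\omega/2)\bigr]^2$-type integrals, quotes Watson/Laplace asymptotics for these from the cited reference, and then argues that continuity in $p$ together with the first-order asymptotics forces the existence of the shifted constants $c_i,d_i$; the identification of $\tfrac14$ as the infimum is delegated to a separate monotonicity lemma (Lemma~\ref{lemma:decreasing}). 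You instead push the computation into Schoenberg's B-spline calculus, expressing $\nu_p=-h_{2p}''(0)/h_{2p}(0)=2\bigl(h_{2p-2}(0)-h_{2p-2}(1)\bigr)/h_{2p}(0)$ and $u_p$ through central values and truncated moments of $h_{2p-2}$ (your formulas check out at $p=2$, reproducing $U=\tfrac3{10}$), derive the same leading orders from the local CLT, and close with the Heisenberg bound $U\ge\tfrac14$ plus non-Gaussianity of $\operatorname{rect}^{\{p\}}$ to identify $\tfrac14$ as the unattained infimum — a cleaner endgame that bypasses the monotonicity lemma entirely.

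One caveat applies to both arguments and you correctly flag it as the heart of the matter: a first-order asymptotic $F(p)\sim G(p)$ alone does not yield constants $c_1,c_2$ with $G(p+c_2)\le F(p)\le G(p+c_1)$ for all $p\ge2$ — that requires a uniformly bounded second-order remainder (equivalently, that $c(p):=G^{-1}(F(p))-p$ stays bounded). The paper glosses over this with a continuity assertion; your plan (explicit Gaussian sandwich $1-v^2/6\le\operatorname{sinc}v\le e^{-v^2/6}$ on a fixed window, exponentially small tails, hand-checking $p=2,3$) is a concrete and workable route to making the step effective, but it is sketched rather than executed. As a proposal it is at least as rigorous as the published argument at this point; as a finished proof it would still need that accounting carried through.
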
}

\section{Preliminary Results}\label{sec2}

We prove below that the integral defining the frequency mean $ \beta[\hat{f}] $ is always convergent; we will analyze its behavior near $ \omega = 0 $ and for large $ |\omega| $.

Since $ f $ has compact support, the Paley-Wiener theorem ensures that $ \hat{f} $ is smooth and square-integrable, i.e., $ \hat{f} \in L^2(\mathbb{R}) $. This means that $ |\hat{f}(\omega)|^2 $ is well-behaved and decays at infinity.

Since $ \hat{f} $ is smooth, we can expand it in a Taylor series around $ \omega = 0 $:  
$$
\hat{f}(\omega) = \hat{f}(0) + \omega \hat{f}'(0) + \frac{\omega^2}{2} \hat{f}''(0) + O(\omega^3).
$$
Squaring this expansion, we obtain:  
$$
|\hat{f}(\omega)|^2 = \hat{f}^2(0) + 2\omega \hat{f}(0) \hat{f}'(0) + O(\omega^2).
$$
Multiplying by $ \omega $ and integrating over $ (-1,1) $, we see that the integral  
$$
\int_{-1}^{1} \omega |\hat{f}(\omega)|^2 \, d\omega
$$
is convergent because the leading term is $ O(\omega) $, which integrates to $ O(1) $.

For large $ |\omega| $, smoothness of $ \hat{f} $ implies that it decays faster than any polynomial. In particular, integration by parts in the Fourier transform representation suggests a decay of the form  
$$
|\hat{f}(\omega)| \leq \frac{C}{|\omega|^N}
$$
for any $ N > 0 $, depending on the number of times we integrate by parts. Choosing $ N = 2 $, we get  
$$
|\hat{f}(\omega)|^2 \leq \frac{C}{|\omega|^4}.
$$
Thus, the integral  
$$
\int_{|\omega| \geq 1} \omega |\hat{f}(\omega)|^2 \, d\omega
$$
behaves asymptotically as  
$$
\int_{|\omega| \geq 1} \frac{\omega}{\omega^4} \, d\omega = \int_{|\omega| \geq 1} \frac{1}{\omega^3} \, d\omega,
$$
which is absolutely convergent.

Instead, the variance integral can diverge. Indeed, it is sufficient to consider the function
$$
f(x) = \frac{1}{2} \mathds{1}_{[-1,1]}(x),
$$  
whose Fourier transform is  
$$
\hat{f}(\omega) = \frac{\sin{\omega}}{\omega}.
$$  
From this, we obtain  
$$
\sigma_{\omega}^2 \left[\frac{\sin{\omega}}{\omega}\right] = +\infty.
$$  
It follows that the uncertainty can also diverge when taking the same function,  
$$
U\left[f\right] = +\infty.
$$

\subsection{Some useful Lemmas}

\begin{Lemma}[Uncertainty Properties]  
\label{lemma:uncertaintyprop}
Let $ K \subseteq \mathbb{R} $ be a compact set. Let $ U $ be the Uncertainty operator \eqref{incertezza}, then the following properties hold:  
\begin{itemize}  
    \item[(i)]  Let $ \lambda, \gamma \in \mathbb{R} \setminus \{0\}, \tau \in \mathbb{R} $, and define the function $ g_{\lambda,\gamma,\tau}(x) := \lambda f(\gamma x-\tau) $.  
Then, the following identity holds:
\begin{equation}
    U(g_{\lambda,\gamma,\tau}) = U(f), \quad \forall f \in \mathcal{F}_{\text{supp}}(K),
    \label{eq:supportonormalizzazione}
\end{equation}
where $ U(\cdot) $ denotes the uncertainty measure.
    \item[(ii)]  The following inequalities hold:
    \begin{equation}  
        \inf_{\substack{f \in \mathcal{F}_{supp}(K) \\ \left  \| f  \right\| \leq 1}}U[f] \leq \inf_{\substack{f \in \mathcal{F}_{supp}(K) \\ \left  \| f  \right\| = 1}}U[f] \; \;, \quad \quad\inf_{\substack{f \in \mathcal{F}_{supp}(K) \\ \left  \| f  \right\| \geq 1}}U[f] \leq \inf_{\substack{f \in \mathcal{F}_{supp}(K) \\ \left  \| f  \right\| = 1}}U[f]. 
\label{elencoincertezzesuF}  
    \end{equation}
    \item[(iii)] \textbf{[Non- Homogeneity]} $ U[\lambda f] = U[f] $ \; $ \forall \lambda \in \mathbb{R}, $ \; $ \forall f \in \mathcal{F}_{supp}(K) $.
\end{itemize}  
\end{Lemma}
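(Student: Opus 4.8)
The plan is to reduce all three assertions to the elementary behaviour of the mean and variance operators under the three generating symmetries---amplitude scaling, dilation, and translation---and then to assemble them. I would begin with part (iii), the non-homogeneity, as it is the simplest and motivates the rest. Writing $g=\lambda f$ with $\lambda\neq 0$, one has $|g(x)|^2=\lambda^2|f(x)|^2$ and, by linearity of the Fourier transform \eqref{trasformatafourierdef}, $\widehat{g}(\omega)=\lambda\widehat{f}(\omega)$, so that $|\widehat{g}(\omega)|^2=\lambda^2|\widehat{f}(\omega)|^2$. The factor $\lambda^2$ appears identically in the numerator and in the normalising $\|\cdot\|^2$ of each of the operators $\alpha,\beta,\sigma_x^2,\sigma_\omega^2$ entering \eqref{incertezza}, hence it cancels and every operator is left unchanged; therefore $U[\lambda f]=U[f]$, which is exactly (iii).

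For part (i) I would decompose the affine substitution. Writing $g_{\lambda,\gamma,\tau}(x)=\lambda\,f\bigl(\gamma(x-\tau/\gamma)\bigr)$ exhibits it as amplitude scaling by $\lambda$, dilation by $\gamma$, and translation by $\tau/\gamma$, which I treat in turn. Amplitude scaling is handled by (iii). For the dilation $s(x):=f(\gamma x)$, the change of variables $y=\gamma x$ together with the Fourier scaling rule $\widehat{s}(\omega)=|\gamma|^{-1}\widehat{f}(\omega/\gamma)$ gives $\alpha[s]=\gamma^{-1}\alpha[f]$ and $\beta[\widehat{s}]=\gamma\,\beta[\widehat{f}]$; feeding these centred means into the variance integrals yields $\sigma_x^2[s]=\gamma^{-2}\sigma_x^2[f]$ and $\sigma_\omega^2[\widehat{s}]=\gamma^2\,\sigma_\omega^2[\widehat{f}]$, so the two scaling factors cancel in the product $U[s]=\sigma_x^2[s]\,\sigma_\omega^2[\widehat{s}]=U[f]$. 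For the translation $t(x):=f(x-\tau_0)$ the mean merely shifts, $\alpha[t]=\alpha[f]+\tau_0$, leaving $\sigma_x^2[t]=\sigma_x^2[f]$, while $\widehat{t}(\omega)=e^{-i\omega\tau_0}\widehat{f}(\omega)$ satisfies $|\widehat{t}|^2=|\widehat{f}|^2$, so $\beta$ and $\sigma_\omega^2$ are untouched. Composing the three steps proves $U[g_{\lambda,\gamma,\tau}]=U[f]$; observe that (iii) is recovered as the special case $\gamma=1,\ \tau=0$.

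Part (ii) is then immediate from set inclusion: since $\{f\in\mathcal{F}_{supp}(K):\|f\|=1\}$ is contained both in $\{\|f\|\le 1\}$ and in $\{\|f\|\ge 1\}$, and an infimum over a larger set cannot exceed the infimum over a subset, both inequalities in \eqref{elencoincertezzesuF} follow at once. I would additionally remark that, by the non-homogeneity (iii), every value $U[f]$ equals $U[f/\|f\|]$ with $f/\|f\|$ of unit norm, so that in fact all three infima coincide and the normalisation $\|f\|=1$ may be imposed without loss of generality in the subsequent optimisation problems.

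The only genuinely delicate bookkeeping is in the dilation step of part (i): one must track the unsigned factor $|\gamma|$ arising from the Jacobian and from the Fourier scaling rule separately from the signed $\gamma$ entering through $x=y/\gamma$ in the first moments, and check that for $\gamma<0$ the orientation reversal in the change of variables is absorbed so that the stated signed powers of $\gamma$ emerge; the variance relations themselves, involving squares, are insensitive to the sign and so the product is invariant in every case. It is also worth verifying at the outset that $g_{\lambda,\gamma,\tau}$ again lies in a class $\mathcal{F}_{supp}(K')$ for the affinely transformed support $K'$, so that $U[g_{\lambda,\gamma,\tau}]$ is well defined through the same formulas; this is routine, since the piecewise-polynomial structure and the endpoint-continuity conditions in \eqref{facilisuppK} are preserved under affine reparametrisation.
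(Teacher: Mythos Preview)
Your proof is correct and follows essentially the same approach as the paper: both arguments compute how the spatial and frequency means and variances transform under the affine substitution and observe that the factors $\gamma^{-2}$ and $\gamma^{2}$ cancel in the product, while the amplitude and translation leave $U$ untouched; part (ii) is handled by set inclusion in both cases. The only organizational difference is that the paper treats all three parameters $\lambda,\gamma,\tau$ simultaneously in one direct calculation and then derives (iii) as the special case $\gamma=1,\tau=0$, whereas you first isolate (iii) and then decompose (i) into amplitude, dilation, and translation steps---this is arguably cleaner, and your extra remark that the three infima in (ii) actually coincide via (iii) is a useful observation the paper does not make explicit.
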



\begin{proof}
We split the proof by points:

(i)  We observe that if $ \operatorname{supp}\{f(x)\} = [-a, a] $, then the support of the scaled function is given by:
\begin{equation}
    \operatorname{supp}\{g_{\lambda,\gamma}(x)\} = \left[ \frac{\tau}{\gamma}-\frac{a}{\gamma}, \frac{\tau}{\gamma}+\frac{a}{\gamma} \right].
    \label{varsupp}
\end{equation}
Furthermore, the $ L^2 $-norms of the functions are related by:
\begin{equation}
    \left\| g_{\lambda,\gamma,\tau} \right\|^2_{L^2\left( \left[ \frac{\tau-a}{\gamma}, \frac{\tau+a}{\gamma} \right] \right)} 
    = \int_{\frac{\tau-a}{\gamma}}^{\frac{\tau+a}{\gamma}} |g_{\lambda,\gamma,\tau}(x)|^2 \, dx 
    = \frac{\lambda^2}{\gamma} \int_{-a}^{a} |f(x)|^2 \, dx 
    = \frac{\lambda^2}{\gamma} \left\| f \right\|^2_{L^2([-a,a])}.
    \label{uguanorm}
\end{equation}
Using this identity, we compute the spatial mean of $ g_{\lambda,\gamma} $ as follows:
\begin{equation}
\begin{split}
    \alpha[g_{\lambda,\gamma,\tau}] 
    &= \frac{1}{\left\| g_{\lambda,\gamma,\tau} \right\|^2_{L^2\left( \left[ \frac{\tau-a}{\gamma}, \frac{\tau+a}{\gamma} \right] \right)}} \int_{\frac{\tau-a}{\gamma}}^{\frac{\tau+a}{\gamma}} x \, |g_{\lambda,\gamma}(x)|^2 \, dx \\
    &= \frac{1}{\left\| g_{\lambda,\gamma,\tau} \right\|^2} \int_{\frac{\tau-a}{\gamma}}^{\frac{\tau+a}{\gamma}} x \, |\lambda f(\gamma x-\tau)|^2 \, dx \\
    &= \frac{\gamma \lambda^2}{\lambda^2 \left\| f \right\|^2_{L^2([-a,a])}} \int_{-a}^{a} \frac{x+\tau}{\gamma} \, |f(x)|^2 \cdot \frac{dx}{\gamma} 
    = \frac{\alpha[f]}{\gamma}+\displaystyle\frac{\tau}{\gamma}.
\end{split}
\label{medialambagspaziale}
\end{equation}
Therefore, for the computation of the spatial variance, we obtain:
\begin{equation}
\begin{split}
\sigma_x^2[g_{\lambda,\gamma,\tau}] 
&= \frac{1}{\left\| g_{\lambda,\gamma,\tau} \right\|^2_{L^2\left( \left[ \frac{\tau-a}{\gamma}, \frac{\tau+a}{\gamma} \right] \right)}} \int_{\frac{\tau-a}{\gamma}}^{\frac{\tau+a}{\gamma}} (x - \alpha[g_{\lambda,\gamma,\tau}])^2 \, |g_{\lambda,\gamma,\tau}(x)|^2 \, dx \\
&= \frac{\gamma \lambda^2}{\lambda^2 \left\| f \right\|^2_{L^2([-a,a])}} \int_{\frac{\tau-a}{\gamma}}^{\frac{\tau+a}{\gamma}} \left(x - \frac{\alpha[f]}{\gamma}-\displaystyle\frac{\tau}{\gamma}\right)^2 |f(\gamma x-\tau)|^2 \, dx \\
&= \frac{\gamma}{\left\| f \right\|^2_{L^2([-a,a])}} \int_{-a}^{a} \left( \frac{x+\tau}{\gamma}-\frac{\alpha[f]}{\gamma}-\displaystyle\frac{\tau}{\gamma} \right)^2 |f(x)|^2 \cdot \frac{dx}{\gamma} \\
&= \frac{1}{\gamma^2 \left\| f \right\|^2_{L^2([-a,a])}} \int_{-a}^{a} (x - \alpha[f])^2 |f(x)|^2 \, dx 
= \frac{\sigma_x^2[f]}{\gamma^2}.
\end{split}
\label{varianzalambagspaziale}
\end{equation}
Next, we compute the Fourier transform of $ g_{\lambda,\gamma} $:
\begin{equation}
\begin{split}
\widehat{g_{\lambda,\gamma,\tau}}(\omega) 
&= \int_{\mathbb{R}} e^{-i \omega x} g_{\lambda,\gamma,\tau}(x) \, dx 
= \lambda \int_{\mathbb{R}} e^{-i \omega x} f(\gamma x-\tau) \, dx \\
&= \lambda \int_{\mathbb{R}} e^{-i \omega \frac{x+\tau}{\gamma}} f(x) \cdot \frac{dx}{\gamma} 
= \frac{\lambda}{\gamma}e^{-i\omega\frac{\tau}{\gamma}} \, \widehat{f} \left( \frac{\omega}{\gamma} \right).
\end{split}
\label{trasffouri}
\end{equation}
Hence, by applying Plancherel’s Theorem, we obtain the corresponding relation for the $ L^2 $-norms in the frequency domain:
\begin{equation}
\left\| \widehat{g_{\lambda,\gamma,\tau}} \right\|^2_{L^2(\mathbb{R})} 
= 2\pi \left\| g_{\lambda,\gamma,\tau} \right\|^2_{L^2\left( \left[-\frac{a}{\gamma}, \frac{a}{\gamma} \right] \right)} 
= 2\pi \cdot \frac{\lambda^2}{\gamma} \left\| f \right\|^2_{L^2([-a,a])} 
= \frac{\lambda^2}{\gamma} \left\| \widehat{f} \right\|^2_{L^2(\mathbb{R})}.
\label{ugunormfrequenza}
\end{equation}
We now compute the frequency mean of the function $ g_{\lambda,\gamma,\tau} $. Using the Fourier transform computed in~\eqref{trasffouri}, we obtain:
\begin{equation}
\begin{split}
\beta[\widehat{g_{\lambda,\gamma,\tau}}] 
&= \frac{1}{2\pi \left\| g_{\lambda,\gamma,\tau} \right\|^2_{L^2\left( \left[\frac{\tau-a}{\gamma}, \frac{\tau+a}{\gamma} \right] \right)}} \int_{\mathbb{R}} \omega \left| \widehat{g_{\lambda,\gamma,\tau}}(\omega) \right|^2 \, d\omega \\
&= \frac{1}{2\pi \cdot \frac{\lambda^2}{\gamma} \left\| f \right\|^2_{L^2([-a,a])}} \int_{\mathbb{R}} \omega \left| \frac{\lambda}{\gamma}e^{-i\omega\frac{\tau}{\gamma}} \hat{f}\left( \frac{\omega}{\gamma} \right) \right|^2 \, d\omega \\
&= \frac{\gamma}{2\pi \left\| f \right\|^2_{L^2([-a,a])}} \int_{\mathbb{R}} \omega \left| \hat{f}\left( \omega \right) \right|^2 \, d\omega = \gamma \, \beta[\widehat{f}].
\end{split}
\label{uguaglianza media frequenza}
\end{equation}
We now compute the frequency variance of $ g_{\lambda,\gamma} $:
\begin{equation}
\begin{split}
\sigma_\omega^2[\widehat{g_{\lambda,\gamma,\tau}}] 
&= \frac{1}{2\pi \left\| g_{\lambda,\gamma,\tau} \right\|^2_{L^2\left( \left[\frac{\tau-a}{\gamma}, \frac{\tau+a}{\gamma} \right] \right)}} \int_{\mathbb{R}} \left( \omega - \beta[\widehat{g_{\lambda,\gamma,\tau}}] \right)^2 \left| \widehat{g_{\lambda,\gamma,\tau}}(\omega) \right|^2 \, d\omega \\
&= \frac{1}{2\pi \cdot \frac{\lambda^2}{\gamma} \left\| f \right\|^2_{L^2([-a,a])}} \int_{\mathbb{R}} \left( \omega - \gamma \beta[\hat{f}] \right)^2 \left| \frac{\lambda}{\gamma} e^{-i\omega
\frac{\tau}{\gamma}}\hat{f}\left( \frac{\omega}{\gamma} \right) \right|^2 \, d\omega \\
&= \frac{\gamma^2}{2\pi \left\| f \right\|^2_{L^2([-a,a])}} \int_{\mathbb{R}} \left( \omega - \beta[\hat{f}] \right)^2 \left| \hat{f}(\omega) \right|^2 \, d\omega = \gamma^2 \sigma_\omega^2[\widehat{f}].
\end{split}
\label{varianzafrequenza}
\end{equation}
Therefore, the uncertainty product remains invariant under the scaling:
\begin{equation}
U(g_{\lambda,\gamma}) = \sigma_x^2[g_{\lambda,\gamma,\tau}] \cdot \sigma_\omega^2[\widehat{g_{\lambda,\gamma,\tau}}] = \frac{1}{\gamma^2} \sigma_x^2[f] \cdot \gamma^2 \sigma_\omega^2[\widehat{f}] = U(f).
\label{uguaglianza incertezze riscalatmnto}
\end{equation}

(ii) Since if $f \in \mathcal{F}_{supp}(K)$ with $\left\| f \right\| = 1$, then either $f \in \mathcal{F}_{supp}(K)$ with $\left\| f \right\| \leq 1$ or $f \in \mathcal{F}_{supp}(K)$ with $\left\| f \right\| \geq 1$, the following inequalities hold (by set inclusion):
$$
\inf_{\substack{f \in \mathcal{F}_{supp}(K) \\ \left\| f \right\| \leq 1}} U[f] \leq \inf_{\substack{f \in \mathcal{F}_{supp}(K) \\ \left\| f \right\| = 1}} U[f],
$$
$$
\inf_{\substack{f \in \mathcal{F}_{supp}(K) \\ \left\| f \right\| \geq 1}} U[f] \leq \inf_{\substack{f \in \mathcal{F}_{supp}(K) \\ \left\| f \right\| = 1}} U[f].
$$

Analogous for the second inequality.

Lastly, the point (iii) is an immediate consequence of (i).
\end{proof}
\begin{remark}
Lemma \ref{lemma:uncertaintyprop} shows that the uncertainty operator is invariant under scaling, translation, and modulation. As a consequence, when working with a waveform dictionary, the uncertainty of each atom can be computed directly from the mother function
\end{remark}


From equation~\eqref{eq:supportonormalizzazione}, it follows that the uncertainty remains unchanged under normalization or rescaling of the function.

We ask whether, given a function $ f \in \mathcal{F}_0^+([-a,a]) $, we can find an even function, namely a function $ g \in P_0^+([-a,a]) $, such that $ U[f] \geq U[g] $.


What is certainly obvious is that, since $ P_0^+([-a,a]) \subseteq \mathcal{F}_0^+([-a,a]) $, it follows that:  
\begin{equation}  
    \inf_{\substack{f \in \mathcal{F}_{0}^+([-a,a])}} U[f] \leq \inf_{\substack{f \in P_0^+([-a,a])}} U[f]  .
\end{equation}

We are therefore asking whether the opposite inequality holds, leading to equality --- specifically, that is, whether the infimum of the uncertainty is always attained by even functions.  

In the following theorem, we will construct an even function with lower uncertainty, although we will not be able to make any claims about the norm of the minimizing function.

Before stating and proving the theorem, it is important to make a key observation.  

If $ f \in \mathcal{F}_{0}^+([-a,a]) $, then its Fourier transform can be written as $ \hat{f}(\omega) = \Re(\hat{f}) + i\Im(\hat{f}) $, whereas if $ f \in P_{0}^+([-a,a]) $, then $ \hat{f}(\omega) = \Re(\hat{f}) $ is a real-valued function.  

This is because if $ f $ is even, then $ \hat{f} $ is also even. In fact,  
$$
\hat{f}(-\omega) = \int_{\mathbb{R}} e^{i\omega x} f(x) \, dx = \int_{\mathbb{R}} e^{-i\omega x} f(-x) \, dx,
$$  
which, by the evenness of $ f(x) $, is equal to $ \hat{f}(\omega) $.  

Thus,  
$$
\overline{\hat{f}(\omega)} = \int_{\mathbb{R}} \overline{e^{-i\omega x} f(x)} \, dx = \int_{\mathbb{R}} e^{i\omega x} f(x) \, dx = \hat{f}(-\omega) = \hat{f}(\omega),
$$  
which follows from the evenness of the Fourier transform, implying that $ \hat{f} $ is a real-valued function.


This observation is essential for the development of the subsequent results. In the following we denote by:
\textcolor{black}{
\begin{equation}
    \hat{f}_i(\omega) = \int_{\mathbb{R}} e^{-i\omega x} f_i(x) \, dx, \quad \forall i \in \{s, d\}
\end{equation}
}
the Fourier transforms of the left and right parts of the function, respectively.

\textcolor{black}{
We begin with an energy-balance statement that governs the sign of the cross-frequency term entering the main theorem of this section. By expressing the cross integral as the difference between the $L^2$ energies of the odd and even components of $f^{\prime}$, the proposition provides a sharp criterion for when the contribution is favorable or adverse.
\begin{Prop}[Cross–frequency identity and sign characterization]
\label{lemma:PV}
Let $a>0$ and $f\in \mathcal{F}_0^+([-a,a])$ be such that $f$ is absolutely continuous on $[-a,a]$ with weak derivative $f'\in L^2(-a,a)$. Let $f_d,f_s$ be the right/left reflections defined in \eqref{eq:simmetryfunctions}.
Then
\begin{equation}
\label{eq:cross-identity}
\int_{\mathbb{R}}\omega^2\,\hat f_s(\omega)\,\hat f_d(\omega)\,d\omega
=2\pi\int_{-a}^a f_s'(x)\,f_d'(x)\,dx
=-\,2\pi\int_{-a}^a f'(x)\,f'(-x)\,dx.
\end{equation}
Writing $u:=f'$ and decomposing $u$ into its even/odd parts
$$
u_{\mathrm{even}}(x):=\tfrac12\big(u(x)+u(-x)\big),\qquad
u_{\mathrm{odd}}(x):=\tfrac12\big(u(x)-u(-x)\big),
$$
we have the exact identity
\begin{equation}
\label{eq:even-odd-balance}
\int_{\mathbb{R}}\omega^2\,\hat f_s(\omega)\,\hat f_d(\omega)\,d\omega
=2\pi\Big(\,\|u_{\mathrm{odd}}\|_{L^2(-a,a)}^2-\|u_{\mathrm{even}}\|_{L^2(-a,a)}^2\,\Big).
\end{equation}
In particular, the sign of the cross term is \emph{not determined a priori}:
$$
\int_{\mathbb{R}}\omega^2\,\hat f_s\,\hat f_d\,d\omega\ \ge 0
\quad\Longleftrightarrow\quad
\|u_{\mathrm{odd}}\|_{2}\ \ge\ \|u_{\mathrm{even}}\|_{2}.
$$
\end{Prop}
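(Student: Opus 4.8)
The plan is to move the frequency-side integral to the physical side via Plancherel applied to first derivatives, and then unwind the two reflections.

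First I would record the structural facts about $f_s$ and $f_d$ coming from \eqref{eq:simmetryfunctions}: each is even, compactly supported in $[-a,a]$, absolutely continuous, and vanishes at $\pm a$ (since $f\in\mathcal F_0^+([-a,a])$ forces $f(-a)=f(a)=0$); moreover $f_s',f_d'\in L^2(-a,a)$, because on each half-interval they coincide with $f'$ up to a sign and a reflection. Evenness makes $\hat f_s$ and $\hat f_d$ real-valued, a fact already established in the text. Integration by parts---legitimate under absolute continuity, with vanishing boundary term---gives $\widehat{f_s'}(\omega)=i\omega\,\hat f_s(\omega)$ and $\widehat{f_d'}(\omega)=i\omega\,\hat f_d(\omega)$; since $f_s',f_d'\in L^2$, Plancherel also shows $\omega\hat f_s,\omega\hat f_d\in L^2(\mathbb R)$, so by Cauchy--Schwarz the integral $\int_{\mathbb R}\omega^2\hat f_s\hat f_d\,d\omega$ converges absolutely.

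Next I would apply the polarised form of Plancherel \eqref{plancherelnorme} to the real functions $f_s'$ and $f_d'$, namely $\int_{\mathbb R}\widehat{f_s'}(\omega)\,\overline{\widehat{f_d'}(\omega)}\,d\omega=2\pi\int_{-a}^a f_s'(x)f_d'(x)\,dx$. Since $\hat f_d$ is real, $\overline{\widehat{f_d'}(\omega)}=-i\omega\,\hat f_d(\omega)$, so the left side equals $\int_{\mathbb R}(i\omega\hat f_s)(-i\omega\hat f_d)\,d\omega=\int_{\mathbb R}\omega^2\hat f_s\hat f_d\,d\omega$, which is the first equality in \eqref{eq:cross-identity}. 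For the second equality, I differentiate \eqref{eq:simmetryfunctions}: for $x>0$ one has $f_d'(x)=f'(x)$ and $f_s'(x)=-f'(-x)$, while for $x<0$ one has $f_d'(x)=-f'(-x)$ and $f_s'(x)=f'(x)$; in either case $f_s'(x)f_d'(x)=-f'(x)f'(-x)$ for a.e.\ $x$, hence $\int_{-a}^a f_s'f_d'\,dx=-\int_{-a}^a f'(x)f'(-x)\,dx$.

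Finally, for \eqref{eq:even-odd-balance} I would decompose $u:=f'=u_{\mathrm{even}}+u_{\mathrm{odd}}$ and note that $u(-x)=u_{\mathrm{even}}(x)-u_{\mathrm{odd}}(x)$, whence the pointwise identity $u(x)u(-x)=u_{\mathrm{even}}(x)^2-u_{\mathrm{odd}}(x)^2$ holds a.e.; integrating over $(-a,a)$ and combining with the previous step yields $\int_{\mathbb R}\omega^2\hat f_s\hat f_d\,d\omega=2\pi\big(\|u_{\mathrm{odd}}\|_{L^2(-a,a)}^2-\|u_{\mathrm{even}}\|_{L^2(-a,a)}^2\big)$, and the sign dichotomy is then immediate. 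The only slightly delicate points are the rigorous justification of the derivative/Plancherel step under mere absolute continuity---together with the harmlessness of the possible corner of $f_s$ and $f_d$ at the origin, which is a null set---and keeping the signs straight in the reflection bookkeeping; I do not expect either to be a genuine obstacle.
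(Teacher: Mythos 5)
Your argument is correct and follows essentially the same route as the paper's proof: polarised Plancherel applied to $f_s'$ and $f_d'$ combined with the realness of $\hat f_s,\hat f_d$ for the first equality, direct differentiation of the reflection formulas for the second, and the even/odd decomposition of $u=f'$ (which you carry out via the pointwise identity $u(x)u(-x)=u_{\mathrm{even}}^2-u_{\mathrm{odd}}^2$ rather than the paper's equivalent inner-product computation $\langle u,Ru\rangle$) for the balance identity. Your additional care about absolute convergence of $\int\omega^2\hat f_s\hat f_d\,d\omega$ and the regularity of $f_s,f_d$ at the origin is a welcome refinement but does not change the substance.
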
}
\begin{proof}
\textcolor{black}{
Since $f_s,f_d$ are real and even, their Fourier transforms $\hat f_s,\hat f_d$ are real and even. By Plancherel,
$$
\int_{\mathbb{R}}(i\omega \hat f_s)(\,\overline{i\omega \hat f_d}\,)\,d\omega
=2\pi\int_{-a}^a f_s'(x)\,f_d'(x)\,dx.
$$
Because $\hat f_s,\hat f_d$ are real, $\overline{i\omega \hat f_d}=-\,i\omega \hat f_d$, so the left-hand side equals $\int \omega^2 \hat f_s \hat f_d$. This yields the first equality in \eqref{eq:cross-identity}.
}

\textcolor{black}{
A direct check from \eqref{eq:simmetryfunctions} gives, for $x\neq 0$,
$$
f_d'(x)=\begin{cases} f'(x) & \text{if } x>0\\ -\,f'(-x) & \text{if } x<0\end{cases},
\qquad
f_s'(x)=\begin{cases} -\,f'(-x) &  \text{if } x>0\\ f'(x)& \text{if } x<0\end{cases},
$$
hence $f_s'(x)f_d'(x)=-\,f'(x)f'(-x)$ a.e. on $(-a,a)$, giving the second equality in \eqref{eq:cross-identity}.}

\textcolor{black}{Define $u:=f'$ and the reflection operator $R:L^2(-a,a)\to L^2(-a,a)$ by $(Ru)(x)=u(-x)$. Then
$$
\int_{-a}^a f'(x)f'(-x)\,dx=\langle u,Ru\rangle
=\|u_{\mathrm{even}}\|_2^2-\|u_{\mathrm{odd}}\|_2^2,
$$
because $Ru_{\mathrm{even}}=+u_{\mathrm{even}}$ and $Ru_{\mathrm{odd}}=-u_{\mathrm{odd}}$. Plugging this into \eqref{eq:cross-identity} yields \eqref{eq:even-odd-balance} and the sign characterization.}
\end{proof}

\textcolor{black}{
\begin{remark}[Even/unimodal case]
\label{rem:unimodal}
If $f$ is \emph{even} (for instance, even and unimodal, decreasing in $|x|$ from $0$), then $u=f'$ is \emph{odd} a.e., hence $u_{\mathrm{even}}\equiv 0$. Therefore
$$
\int_{\mathbb{R}}\omega^2\,\hat f_s(\omega)\,\hat f_d(\omega)\,d\omega
=2\pi\,\|f'\|_{L^2(-a,a)}^2\ \ge\ 0,
$$
with strict positivity whenever $f'\not\equiv 0$. Thus, under this structural assumption the cross term in frequency is nonnegative.
\end{remark}
\begin{example}[Sign can be positive or negative]\label{ex:sign-balance}
Consider the piecewise–cubic $f:[-1,1]\to\mathbb{R}$ defined by
$$
f(x)=
\begin{cases}
p_+(x) &\text{if }  0\le x\le 1\\
p_-(x) &\text{if } -1\le x<0
\end{cases}
\qquad
\begin{aligned}
p_+(x)&:=a_+x^3+b_+x^2+c_+x+d,\\
p_-(x)&:=a_-x^3+b_-x^2+c_-x+d,
\end{aligned}
$$
with coefficient tuples
\begin{align*}
&(a_+,b_+,c_+,d)=(0.0095109093,\,-1.6176420481,\,1.3050416889,\,0.3030894498),\\
&(a_-,b_-,c_-)=(0.5526139574,\,1.2275239864,\,0.9779994788).
\end{align*}
Numerically, $f(0^-)=f(0^+)=0.3030894498$, while $f(-1)\approx 5.6\!\times\!10^{-17}$ and $f(1)\approx -10^{-10}$ (roundoff).  
Let $u=f'$, and decompose $u=u_{\mathrm{even}}+u_{\mathrm{odd}}$. We obtain
$$
\|u_{\mathrm{even}}\|_2^2 \approx 0.675886085,\qquad
\|u_{\mathrm{odd}}\|_2^2  \approx 0.433013302,
$$
hence
$$
J:=\int_{-1}^1 f'(x)f'(-x)\,dx
= \|u_{\mathrm{even}}\|_2^2-\|u_{\mathrm{odd}}\|_2^2
\approx 0.242872783 \;>\; 0.
$$
By \eqref{eq:cross-identity}–\eqref{eq:even-odd-balance}, the cross term in frequency is therefore
$$
\int_{\mathbb{R}}\omega^2\,\hat f_s(\omega)\,\hat f_d(\omega)\,d\omega
= 2\pi\big(\|u_{\mathrm{odd}}\|_2^2-\|u_{\mathrm{even}}\|_2^2\big)
= -\,2\pi J
\approx -1.526014699 \;<\; 0.
$$
Figure~\ref{fig:piecewise-cubic-energy}\subref{fig:f-plot} shows $f$ and
Figure~\ref{fig:piecewise-cubic-energy}\subref{fig:u-parts} the even/odd parts of $u$.
\end{example}
}

\begin{figure}[hbt!]
  \centering

  \begin{subfigure}{.48\linewidth}
    \centering
    \includegraphics[width=\linewidth]{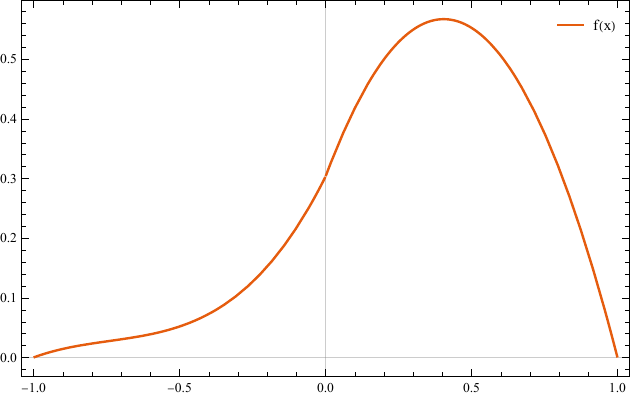}
    \caption{Test function $f$ on $[-1,1]$.}
    \label{fig:f-plot}
  \end{subfigure}\hfill
  \begin{subfigure}{.48\linewidth}
    \centering
    \includegraphics[width=\linewidth]{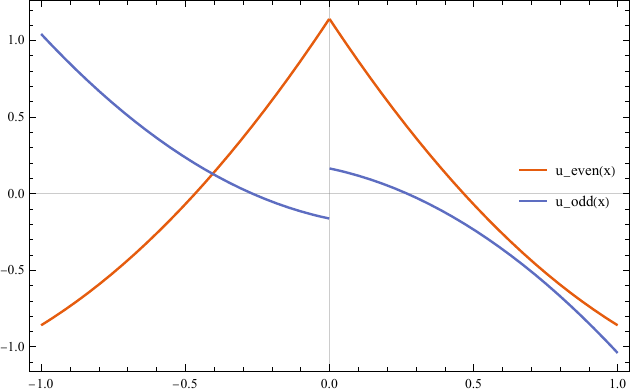}
    \caption{Even/odd parts of $u=f'$: $u_{\mathrm{even}},u_{\mathrm{odd}}$.}
    \label{fig:u-parts}
  \end{subfigure}
  \caption{Piecewise-cubic counterexample where $\int \omega^2 \hat f_s \hat f_d<0$.
  Numerically, $J\approx 0.242872783>0$ and $2\pi(\|u_{\mathrm{odd}}\|_2^2-\|u_{\mathrm{even}}\|_2^2)=-2\pi J\approx -1.526014699$.}
  \label{fig:piecewise-cubic-energy}
\end{figure}

We now turn our attention to the Fourier transform of the function. In order to establish the main theorem, we require a second technical lemma, stated below.

\begin{Lemma}
\label{lemma:LdL}
Under the assumptions of Lemma \ref{lemma:PV}, the following properties hold for the Fourier transforms:
\begin{itemize}
    \item[(i)] 
    $$
    \hat{f}(\omega) = \frac{1}{2} \left(  \hat{f}_s(\omega) + \hat{f}_d(\omega) \right) + i \, \Im(\hat{g}_d(\omega) - \hat{g}_s(\omega)),
    $$
    where the functions $\hat{f}_i(\omega)$ are even, and the imaginary parts $\Im(\hat{g}_i(\omega))$ are defined as
    $$
    \Im(\hat{g}_i(\omega)) := \frac{1}{2\pi \|f\|^2} \int_{0}^{a} \sin(\omega x) f_i(x) \, dx, \quad \text{for all } i \in \{s,d\},
    $$
    and are odd functions.
    \item[(ii)] The squared modulus $|\hat{f}(\omega)|^2$ is an even function, and satisfies the identity:
    \begin{equation}
    |\hat{f}(\omega)|^2 = \frac{1}{4} \left( \hat{f}_s(\omega) + \hat{f}_d(\omega) \right)^2 + \left( \Im(\hat{g}_d(\omega) - \hat{g}_s(\omega)) \right)^2
    \label{eqmoduloquadrofourier}
    \end{equation}
\end{itemize}
\end{Lemma}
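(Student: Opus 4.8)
The plan is to establish both parts by one elementary computation: split the defining integral for $\hat f(\omega)$ at the origin and fold the left half onto the right half via the substitution $x\mapsto-x$. I would first record the two structural facts about $f_s,f_d$ that follow from \eqref{eq:simmetryfunctions}: both are even, $f_d$ agrees with $f$ on $[0,a]$, and $f_s$ agrees with $x\mapsto f(-x)$ on $[0,a]$. Since an even, compactly supported $L^1$ function has a purely-cosine Fourier transform, this already gives $\hat f_d(\omega)=2\int_0^a\cos(\omega x)\,f(x)\,dx$ and $\hat f_s(\omega)=2\int_0^a\cos(\omega x)\,f(-x)\,dx$, so in particular $\hat f_s,\hat f_d$ are real and even, as claimed.

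For part (i), I would write $\hat f(\omega)=\int_{-a}^{0}e^{-i\omega x}f(x)\,dx+\int_{0}^{a}e^{-i\omega x}f(x)\,dx$ and apply $x\mapsto-x$ to the first term, obtaining $\hat f(\omega)=\int_{0}^{a}\big(e^{i\omega x}f(-x)+e^{-i\omega x}f(x)\big)\,dx$. Expanding by Euler's formula splits this cleanly: the real part is $\int_{0}^{a}\cos(\omega x)\big(f(x)+f(-x)\big)\,dx=\tfrac12\big(\hat f_s(\omega)+\hat f_d(\omega)\big)$, and the imaginary part is $\int_{0}^{a}\sin(\omega x)\big(f(-x)-f(x)\big)\,dx$. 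Recalling that on $[0,a]$ one has $f_d(x)=f(x)$ and $f_s(x)=f(-x)$, the latter integral is exactly $\Im\big(\hat g_d(\omega)-\hat g_s(\omega)\big)$ in the notation of the statement; oddness of each $\Im(\hat g_i)$ in $\omega$ is immediate from $\sin(-\omega x)=-\sin(\omega x)$. This yields the decomposition $\hat f=\tfrac12(\hat f_s+\hat f_d)+i\,\Im(\hat g_d-\hat g_s)$.

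Part (ii) is then a one-line consequence: (i) writes $\hat f(\omega)=A(\omega)+iB(\omega)$ with $A:=\tfrac12(\hat f_s+\hat f_d)$ and $B:=\Im(\hat g_d-\hat g_s)$ both \emph{real}-valued, hence $|\hat f(\omega)|^2=A(\omega)^2+B(\omega)^2$, which is precisely \eqref{eqmoduloquadrofourier}. Evenness of $|\hat f|^2$ follows either from the general relation $\hat f(-\omega)=\overline{\hat f(\omega)}$ for real $f$, or directly since $A$ is even and $B$ is odd, so $A^2$ and $B^2$ are both even.

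The argument is essentially bookkeeping and I do not anticipate a genuine obstacle; the two points that need care are the sign produced by the folding substitution $x\mapsto-x$ (it determines whether the imaginary part surfaces as $\hat g_d-\hat g_s$ or its negative) and carrying the normalising constant in the definition of $\Im(\hat g_i)$ consistently through the identification. The endpoint conditions $f(-a)=f(a)=0$ and the value of $f$ at the origin are irrelevant here: the former enters only the cross-term and variance identities of Lemma~\ref{lemma:PV} and Theorem~\ref{th:infimumevenf}, and the latter concerns only a null set.
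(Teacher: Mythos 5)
Your proof is correct and follows essentially the same route as the paper's: split the transform at the origin, use the evenness of $f_s,f_d$ to identify the real part with $\tfrac12(\hat f_s+\hat f_d)$, identify the imaginary part with the sine integrals of $f_d,f_s$ over $[0,a]$, and square to get (ii). The one sign you flag is indeed delicate --- with the convention $\hat f(\omega)=\int e^{-i\omega x}f(x)\,dx$ your folding gives the imaginary part as $\int_0^a\sin(\omega x)\bigl(f_s(x)-f_d(x)\bigr)\,dx=\Im(\hat g_s-\hat g_d)$, i.e.\ the \emph{negative} of the stated $\Im(\hat g_d-\hat g_s)$ --- but the paper's own proof contains the same slip (it drops the minus sign from $e^{-i\omega x}=\cos(\omega x)-i\sin(\omega x)$), the statement's normalising prefactor $\tfrac{1}{2\pi\|f\|^2}$ is likewise dropped in the paper's proof, and neither discrepancy affects identity \eqref{eqmoduloquadrofourier}, which is all that is used later.
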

\begin{proof}
Let us first compute the real and imaginary parts of $ \hat{f}(\omega) $. The real part is given by:
\begin{align*}
    \Re(\hat{f}(\omega)) = \int_{\mathbb{R}} \cos(\omega x) f(x) \, dx = \int_{-a}^{0} \cos(\omega x) f_s(x) \, dx + \int_{0}^{a} \cos(\omega x) f_d(x) \, dx.
\end{align*}
 Since both $ f_s $ and $ f_d $ are supported on symmetric intervals and are even functions, their Fourier transforms $ \hat{f}_s(\omega) $ and $ \hat{f}_d(\omega) $ are also even. Therefore, we obtain:
\begin{equation}
    \Re(\hat{f}(\omega)) = \frac{1}{2}\left( \hat{f}_s(\omega) + \hat{f}_d(\omega)\right).
\end{equation}
For the imaginary part, we compute:
\begin{equation}
\begin{split}
    \Im(\hat{f}(\omega)) &= \int_{\mathbb{R}} \sin(\omega x) f(x) \, dx \\
    &=  \int_{-a}^{0} \sin(\omega x) f_s(x) \, dx +\int_{0}^{a} \sin(\omega x) f_d(x) \, dx \\
    &= \Im(\hat{g}_d(\omega)) - \Im(\hat{g}_s(\omega)) = \Im(\hat{g}_d(\omega) - \hat{g}_s(\omega)),
\end{split}
\end{equation}
where we define:
$$
\Im(\hat{g}_i(\omega)) := \int_0^a \sin(\omega x) f_i(x) \, dx, \quad \forall i \in \{s, d\}.
$$
Note that each $ \Im(\hat{g}_i(\omega)) $ is an odd function of $ \omega $, since:
\begin{align*}
\Im(\hat{g}_i(-\omega)) &= \int_0^a \sin(-\omega x) f_i(x) \, dx \\
                        &= - \int_0^a \sin(\omega x) f_i(x) \, dx = -\Im(\hat{g}_i(\omega)).
\end{align*}
Thus, $ \Im(\hat{g}_d(\omega) - \hat{g}_s(\omega)) $ is also an odd function, being the difference of two odd functions.  
On the other hand, the quantity
$$
\frac{1}{2} \left(  \hat{f}_s(\omega) + \hat{f}_d(\omega) \right)
$$
is even, being a linear combination of even functions.

It immediately follows that the squared modulus $ |\hat{f}(\omega)|^2 $ is an even function, because it is the sum of the square of an even function and the square of an odd function:
\begin{equation}
\begin{split}
|\hat{f}(\omega)|^2 &= \left| \frac{1}{2} \left( \hat{f}_s(\omega) + \hat{f}_d(\omega) \right) + i \, \Im(\hat{g}_d(\omega) - \hat{g}_s(\omega)) \right|^2 \\
&= \frac{1}{4} \left(  \hat{f}_s(\omega) +  \hat{f}_d(\omega) \right)^2 + \left( \Im(\hat{g}_d(\omega) - \hat{g}_s(\omega)) \right)^2.
\end{split}
\end{equation}
This confirms the identity \eqref{eqmoduloquadrofourier}.  
\end{proof}

\textcolor{black}{\subsection{Proof of Theorem \ref{th:infimumevenf}}
The proof of Theorem \ref{th:infimumevenf} shows that the inequality}
$$
U[f]\ \ge\ \min\{\,U[f_s],\,U[f_d]\,\}\qquad\text{for every }f.
$$
Note that the choice of $f_{\min}$ must be consistent across both spatial and frequency domains, i.e., it must minimize the product $\sigma_x^2[f_i] \cdot \sigma_\omega^2[\hat{f}_i]$ jointly.


This leads to the following corollary:
\begin{Cor}
    We have that:
    $$
\inf_{\substack{f \in \mathcal{F}_0^+([-a,a])}} U[f] = \inf_{\substack{f \in P_0^+([-a,a])}} U[f]
 \textcolor{black}{ =\frac{1}{4}}.$$
\end{Cor}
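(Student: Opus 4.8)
The plan is to establish the two equalities in turn: the first as a direct consequence of the symmetrisation result together with the invariances of $U$, and the common value $\tfrac14$ by sandwiching that infimum between the classical Heisenberg bound (from below) and the B‑spline family $\operatorname{rect}^{\{p\}}$ (from above).

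\emph{The first equality.} Since $P_0^+([-a,a])\subseteq\mathcal F_0^+([-a,a])$, the infimum over the larger class is no larger: $\inf_{\mathcal F_0^+}U\le\inf_{P_0^+}U$. For the reverse inequality, take $f\in\mathcal F_0^+([-a,a])$. Being a continuous piecewise polynomial that vanishes at $\pm a$, $f$ is absolutely continuous with $f'\in L^2(-a,a)$, so Theorem~\ref{th:infimumevenf} applies and gives $U[f]\ge\min\{U[f_s],U[f_d]\}$. Each reflection $f_s,f_d$ is even about $\alpha[f]$, non‑negative, piecewise polynomial, and vanishes at the two endpoints of its (symmetric) support; translating it so that its centre is $0$ and then dilating it so that its support becomes exactly $[-a,a]$ produces, by the translation‑ and scale‑invariance of $U$ (Lemma~\ref{lemma:uncertaintyprop}(i)), functions $\tilde f_s,\tilde f_d\in P_0^+([-a,a])$ with $U[\tilde f_s]=U[f_s]$ and $U[\tilde f_d]=U[f_d]$. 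Hence $U[f]\ge\min\{U[\tilde f_s],U[\tilde f_d]\}\ge\inf_{P_0^+([-a,a])}U$, and passing to the infimum over $f$ yields $\inf_{\mathcal F_0^+}U\ge\inf_{P_0^+}U$.

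\emph{The common value is $\tfrac14$.} For the lower bound, every $f\in\mathcal F_0^+([-a,a])$ satisfies $xf\in L^2$ and, since $f$ is absolutely continuous, $\widehat{f'}(\omega)=i\omega\hat f(\omega)$ together with Plancherel's identity $\|\hat f\|_{L^2}^2=2\pi\|f\|_{L^2}^2$; the standard commutator argument $[x,-i\,d/dx]=i$ then gives Heisenberg's inequality in exactly the normalisation adopted here, $U[f]=\sigma_x^2[f]\,\sigma_\omega^2[\hat f]\ge\tfrac14$ (subtracting the means $\alpha[f],\beta[\hat f]$ does not change this, the inequality being translation‑ and modulation‑invariant). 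Thus $\inf_{\mathcal F_0^+}U\ge\tfrac14$, hence also $\inf_{P_0^+}U\ge\tfrac14$. For the matching upper bound, for $p\ge2$ the centred B‑spline $\operatorname{rect}^{\{p\}}$ is a non‑negative even piecewise polynomial of class $C^{p-2}$ supported on $[-p/2,p/2]$ and vanishing at $\pm p/2$, so it belongs to $P_0^+([-p/2,p/2])$; a single dilation to $[-a,a]$ yields $\phi_p\in P_0^+([-a,a])$ with $U[\phi_p]=U[\operatorname{rect}^{\{p\}}]$ by Lemma~\ref{lemma:uncertaintyprop}. By Theorem~\ref{th:Urectp}, $U[\operatorname{rect}^{\{p\}}]\searrow\tfrac14$, so $\inf_{P_0^+([-a,a])}U\le\inf_{p\ge2}U[\phi_p]=\tfrac14$. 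Chaining the inequalities gives $\tfrac14\le\inf_{\mathcal F_0^+}U\le\inf_{P_0^+}U\le\tfrac14$, forcing all three to equal $\tfrac14$.

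There is no deep obstacle; the only care required is of a bookkeeping nature. One must check that the symmetrised functions $f_s,f_d$ — and likewise the rescaled B‑splines $\phi_p$ — genuinely satisfy \emph{all} the defining constraints of $P_0^+([-a,a])$: non‑negativity, the vanishing at $\pm a$ (which is exactly why $p\ge2$ is needed, $\operatorname{rect}^{\{1\}}$ being inadmissible), and the interior matching/continuity conditions built into $\mathcal F_{supp}$; and one must invoke both the scale and the translation invariance of $U$ from Lemma~\ref{lemma:uncertaintyprop} to transport the uncertainty values. A second point to double‑check is that the Heisenberg constant comes out as $1/4$ rather than a $2\pi$‑dependent quantity — this is precisely what the factor $1/(2\pi)$ in the definition of $\sigma_\omega^2$ guarantees. (In fact the sandwich of the previous paragraph already re‑proves the first equality, so Theorem~\ref{th:infimumevenf} is strictly needed only if one insists on equating the two infima without appealing to the Heisenberg bound.)
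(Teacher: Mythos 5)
Your proof is correct, and for the first equality it follows the same route the paper intends: set inclusion gives one direction, and Theorem~\ref{th:infimumevenf} together with the translation/scale invariance of Lemma~\ref{lemma:uncertaintyprop}(i) (used to transport the barycentric reflections back into $P_0^+([-a,a])$, exactly as in the paper's subsequent corollary) gives the other. The paper's own proof of this corollary is a single sentence (``follows directly from Theorem~\ref{th:infimumevenf}'') and never justifies the value $\tfrac14$; your sandwich argument --- the classical Heisenberg bound $U[f]\ge\tfrac14$ from below (correctly matched to the paper's $1/(2\pi)$ normalisation of $\sigma_\omega^2$) and the rescaled B-splines $\operatorname{rect}^{\{p\}}\in P_0^+$ with $U\searrow\tfrac14$ from Theorem~\ref{th:Urectp} from above --- supplies the missing piece, and your bookkeeping (continuity of piecewise polynomials giving $f\in H^1_0$, the need for $p\ge 2$, membership of the rescaled splines in $P_0^+([-a,a])$) is accurate. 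Your parenthetical remark that the sandwich alone already yields the first equality, making Theorem~\ref{th:infimumevenf} logically dispensable for this corollary, is also a fair observation.
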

\begin{proof}
    This follows directly from Theorem \ref{th:infimumevenf}.
\end{proof}

\textcolor{black}{
Invoking Lemma~\ref{lemma:LdL} we can write, for every $f\in\mathcal F_0^+([-a,a])$,
\begin{align*}
\sigma_\omega^2[\hat f]
&=\frac{1}{2\pi\|f\|_2^2}\int_{\mathbb R}\omega^2|\hat f(\omega)|^2\,d\omega\\
&=\frac{1}{4}\,\frac{1}{2\pi\|f\|_2^2}\int_{\mathbb R}\omega^2\Big(
\hat f_s(\omega)+\hat f_d(\omega)\Big)^2\,d\omega
\;+\;\frac{1}{2\pi\|f\|_2^2}\int_{\mathbb R}\omega^2\Big(\Im(\hat g_d-\hat g_s)\Big)^2\,d\omega\, .
\end{align*}
Expanding the square isolates a \emph{cross term}
$$
\int_{\mathbb R}\omega^2\,\hat f_s(\omega)\,\hat f_d(\omega)\,d\omega,
$$
whose sign is \emph{a priori undetermined}. Proposition~\ref{lemma:PV} gives the exact identity
$$
\int_{\mathbb R}\omega^2\,\hat f_s(\omega)\,\hat f_d(\omega)\,d\omega
=2\pi\Big(\|u_{\mathrm{odd}}\|_2^2-\|u_{\mathrm{even}}\|_2^2\Big),
\qquad u:=f',
$$
so the cross term is positive when the odd part of $f'$ dominates, negative when the even part
dominates, and vanishes only in the balanced case. In particular, additional symmetry assumptions
(e.g.\ $f$ even/unimodal, hence $u$ odd) force the cross term to be nonnegative.}

\textcolor{black}{This observation explains why a naive lower bound
$U[f]\ge \min\{U[f_s],U[f_d]\}$ may fail if one reflects about an arbitrary axis:
the uncontrolled cross term couples with the \emph{uncentered} spatial variance
$\sigma_x^2[f]=\tfrac{1}{\|f\|_2^2}\int x^2|f|^2-\alpha[f]^2$ and can drive $U[f]$ below both
$U[f_s]$ and $U[f_d]$. The remedy is canonical: reflect about the barycenter $\alpha[f]$
(or, equivalently, translate $f$ so that $\alpha[f]=0$ and then reflect). In that centered
setting, both the spatial and the spectral variances decompose as genuine convex combinations
of the variances of the two reflections, and the weighted Cauchy–Schwarz step in the
next yields the robust bound
$$
U[f]\ \ge\ \Big(w\,\sqrt{U[f_d]}+(1-w)\,\sqrt{U[f_s]}\Big)^2\ \ge\ \min\{U[f_s],U[f_d]\}.
$$
}

Now we prove Theorem \ref{th:infimumevenf}.

\begin{proof}
\textcolor{black}{\emph{Step 1: spatial side.} Assume $\alpha[f]=0$ (by Lemma~\ref{lemma:uncertaintyprop}(i) we can recenter $f$ without changing $U$).
Split $[-a,a]$ into the left/right halves and set
$$
N_+ := \int_{0}^{a}|f(x)|^2\,dx,\qquad N_- := \int_{0}^{a}|f(-x)|^2\,dx
\quad\Rightarrow\quad \|f\|_2^2=N_++N_- =\frac{1}{2}(\left\|f_s\right\|^2+\left\|f_d\right\|^2),$$
Write $f(x)=f_s(x)\,\mathds 1_{[-a,0)}(x)+f_d(x)\,\mathds 1_{[0,a]}(x)$, so with $\alpha[f]=0$,
$$
\sigma_x^2[f]=\frac{1}{\|f\|_2^2}\int_{-a}^a x^2|f(x)|^2\,dx
=\frac{1}{\|f\|_2^2}\!\left(\int_{-a}^0 x^2|f_s(x)|^2\,dx+\int_0^a x^2|f_d(x)|^2\,dx\right).
$$
Since $f_s,f_d$ are even,
$$
\int_{-a}^0 x^2|f_s|^2dx=\tfrac12\!\int_{-a}^a x^2|f_s|^2dx=\tfrac12\|f_s\|_2^2\,\sigma_x^2[f_s],\quad
\int_0^a x^2|f_d|^2dx=\tfrac12\|f_d\|_2^2\,\sigma_x^2[f_d].
$$
Now using $\|f_s\|_2^2=2N_{-}$ and $\|f_d\|_2^2=2N_{+}$ yelds
$$\sigma_x^2[f]=\frac{\int_{-a}^a x^2|f|^2dx}{\int_{-a}^a |f|^2dx}
= \frac{\|f_s\|_2^2\,\sigma_x^2[f_s] + \|f_d\|_2^2\,\sigma_x^2[f_d]}{\|f_s\|_2^2+\|f_d\|_2^2}
= w\,\sigma_x^2[f_d] + (1-w)\,\sigma_x^2[f_s],$$
where
\begin{equation}
\label{eq:w}
w:=\frac{N_+}{N_+ + N_-}
=\frac{\|f_d\|_2^2}{\|f_d\|_2^2+\|f_s\|_2^2}\in[0,1].    
\end{equation}
\emph{Step 2: frequency side.} By Lemma~\ref{lemma:LdL} we have $\beta[\hat f]=0$ (since $|\hat f|^2$ is even). Split derivative energy on the two halves:
$$
M_+ := \int_{0}^{a}|f'(x)|^2\,dx,\qquad
M_- := \int_{0}^{a}|f'(-x)|^2\,dx.
$$
By Plancherel
\begin{align}
\label{eq:sigw-f}
\sigma_\omega^2[\hat f]
&=\frac{1}{2\pi\|f\|_2^2}\int_{\mathbb R}\omega^2|\hat f(\omega)|^2\,d\omega \notag \\
&=\frac{1}{2\pi\|f\|_2^2}\int_{\mathbb{R}}\left(i \omega \hat{f}(\omega)\right) \cdot \overline{i \omega \hat{f}(\omega)} d \omega
=\frac{\|f'\|_2^2}{\|f\|_2^2}
=\frac{M_+ + M_-}{N_+ + N_-}.
\end{align}
Recall
$$
f_d'(x)=\begin{cases}f'(x),& x>0,\\ -\,f'(-x),& x<0,\end{cases}
\quad\Rightarrow\quad
\|f_d'\|_2^2=\int_{0}^{a}|f'(x)|^2\,dx+\int_{-a}^{0}|f'(-x)|^2\,dx=2M_+,
$$
and similarly
$
\|f_s'\|_2^2=2M_-.
$
Since $f_d,f_s$ are real and even, $\beta[f_s]=\beta[f_d]=0$ and using Plancherel on each even reflection:
\begin{equation}\label{eq:sigw-fd}
\sigma_\omega^2[\widehat{f_d}]
=\frac{1}{2\pi\|f_d\|_2^2}\int_{\mathbb R}\omega^2|\widehat{f_d}(\omega)|^2\,d\omega
=\frac{\|f_d'\|_2^2}{\|f_d\|_2^2}
=\frac{2M_+}{2N_+}
=\frac{M_+}{N_+},
\end{equation}
and, by the same calculation,
\begin{equation}\label{eq:sigw-fs}
\sigma_\omega^2[\widehat{f_s}]
=\frac{\|f_s'\|_2^2}{\|f_s\|_2^2}
=\frac{2M_-}{2N_-}
=\frac{M_-}{N_-}.
\end{equation}
Combining \eqref{eq:w}, \eqref{eq:sigw-f}–\eqref{eq:sigw-fs}, we obtain the exact convex combination
\begin{equation}\label{eq:sigw-weighted}
\sigma_\omega^2[\hat f]
=\frac{M_+ + M_-}{N_+ + N_-}
=w\,\frac{M_+}{N_+}+(1-w)\,\frac{M_-}{N_-}
=w\,\sigma_\omega^2[\widehat{f_d}]+(1-w)\,\sigma_\omega^2[\widehat{f_s}],
\end{equation}
with the same weight $w$ that appears in the spatial-variance mixture.}

\textcolor{black}{
\emph{Step 3: product and ordering.} Now apply the weighted Cauchy–Schwarz inequality to the two-term families
$a_1:=\sigma_x^2[f_d]$, $a_2:=\sigma_x^2[f_s]$, $b_1:=\sigma_\omega^2[\widehat{f_d}]$, $b_2:=\sigma_\omega^2[\widehat{f_s}]$ with weights $w$ and $1-w$:
$$
\big(w a_1+(1-w) a_2\big)\big(w b_1+(1-w) b_2\big)\ \ge\
\big(w\sqrt{a_1b_1}+(1-w)\sqrt{a_2b_2}\big)^2,
$$
where we used that $(\sqrt{a_1b_2}-\sqrt{a_2b_1})^2 \ge 0$ and then $a_1b_2+a_2b_1 \ge 2\sqrt{a_1b_1a_2b_2}$.}

\textcolor{black}{
With $a_i$ and $b_i$ as above, this is precisely \eqref{eq:cs-bound}. Since for any $x,y>0$ and $w\in[0,1]$ one has
$w\sqrt{x}+(1-w)\sqrt{y}\ge \min\{\sqrt{x},\sqrt{y}\}$, \eqref{eq:min-bound} follows. Equality in \eqref{eq:cs-bound} is the Cauchy–Schwarz equality condition, i.e. $a_1 b_2= a_2b_1$.}

\end{proof}
\textcolor{black}{
\begin{remark}
    If $N_+=0$ or $N_-=0$, then $w\in\{0,1\}$ and $f$ coincides (a.e.) with one even reflection; all identities above reduce to equalities with $U[f]=U[f_d]$ or $U[f]=U[f_s]$.
\end{remark}}
\textcolor{black}{
\begin{remark}[Regularity]
The hypothesis can be stated succinctly as $f\in H^1_0(-a,a)$, which is precisely what ensures $\sigma_\omega^2<\infty$ and justifies the Plancherel steps. If $f\notin H^1$, then $\sigma_\omega^2=+\infty$ and the bound holds only in a vacuous extended sense.
\end{remark}}

\begin{remark}[Why centering at the barycenter matters]
\textcolor{black}{Let $f\in H^1_0(-a,a)$ and denote by $f_s^{(0)},f_d^{(0)}$ the even reflections of $f$ about $x=0$.
Write the half–masses $N_\pm^{(0)}:=\int_{0}^{a}|f(\pm x)|^2\,dx$, the weights
$w^{(0)}:=N_+^{(0)}/(N_+^{(0)}+N_-^{(0)})$, and set
$A_{i}^{(0)}:=\sigma_x^2[f_{i}^{(0)}]$, $B_{i}^{(0)}:=\sigma_\omega^2[\widehat{f_{i}^{(0)}}]$ with $i\in\{s,d\}$.
A direct computation gives the \emph{uncentered} identities
$$
\sigma_x^2[f]=w^{(0)}A_d^{(0)}+(1-w^{(0)})A_s^{(0)}-\alpha[f]^2,
\qquad
\sigma_\omega^2[\hat f]=w^{(0)}B_d^{(0)}+(1-w^{(0)})B_s^{(0)}.
$$
The extra negative term $-\alpha[f]^2$ in the spatial variance can drive
$U[f]=\sigma_x^2[f]\sigma_\omega^2[\hat f]$ \emph{below} both $U[f_s^{(0)}]$ and $U[f_d^{(0)}]$,
so the lower bound $U[f]\ge\min\{U[f_s^{(0)}],U[f_d^{(0)}]\}$ may fail if one reflects about an
arbitrary axis.}

\textcolor{black}{The fix is canonical: either translate $f$ so that $\alpha[f]=0$ and then reflect about $0$,
or reflect $f$ about its barycenter $\alpha[f]$. If we define the barycentric reflections
$$
f_s^{(\alpha)}(x):=\begin{cases} f(x) & \text{if } x<\alpha[f]\\ f(2\alpha[f]-x)& \text{if } x\ge \alpha[f]\end{cases},
\qquad
f_d^{(\alpha)}(x):=\begin{cases} f(2\alpha[f]-x)& \text{if  } x<\alpha[f]\\ f(x)& \text{if } x\ge \alpha[f]\end{cases},
$$
and set the corresponding half–masses $N_\pm^{(\alpha)}$ and weight $w^{(\alpha)}:=N_+^{(\alpha)}/(N_+^{(\alpha)}+N_-^{(\alpha)})$, 
then one has the \emph{centered} convex decompositions
$$
\sigma_x^2[f]=w^{(\alpha)}A_d^{(\alpha)}+(1-w^{(\alpha)})A_s^{(\alpha)},
\qquad
\sigma_\omega^2[\hat f]=w^{(\alpha)}B_d^{(\alpha)}+(1-w^{(\alpha)})B_s^{(\alpha)},
$$
with $A_{i}^{(\alpha)}:=\sigma_x^2[f_{i}^{(\alpha)}]$ and
$B_{i}^{(\alpha)}:=\sigma_\omega^2[\widehat{f_{i}^{(\alpha)}}]$ for $i\in\{s,d\}$.
Weighted Cauchy–Schwarz then yields
$$
U[f]\ \ge\ \Big(w^{(\alpha)}\sqrt{U[f_d^{(\alpha)}]}+(1-w^{(\alpha)})\sqrt{U[f_s^{(\alpha)}]}\Big)^2
\ \ge\ \min\{U[f_s^{(\alpha)}],U[f_d^{(\alpha)}]\}.
$$
At this point we have to observe that $f_s^{(\alpha)}(x)$ and $f_d^{(\alpha)}(x)$ have support that is not symmetric with respect to the origin.}
\end{remark}

\textcolor{black}{\begin{Cor}
Let $a>0$ and $f \in \mathcal{F}_0^{+}([-a,a])$ be absolutely continuous with $f'\in L^2(-a,a)$. Then there exists $\psi\in\mathcal{P}_0^{+}([-a,a])$, obtained from $f$ by translation/reflection/rescaling, such that
$$
U[f]\ \ge\ U[\psi].
$$
\end{Cor}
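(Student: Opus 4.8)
The plan is to build $\psi$ directly from Theorem~\ref{th:infimumevenf} together with the scale–translation invariance of $U$ recorded in Lemma~\ref{lemma:uncertaintyprop}(i). The hypotheses on $f$ (absolutely continuous, $f'\in L^2(-a,a)$, and $f\in\mathcal F_0^+$, hence $f(\pm a)=0$) are exactly those of Theorem~\ref{th:infimumevenf}, so applying it to $f$ with the reflections taken about its barycenter $\alpha[f]$ yields
\begin{equation*}
U[f]\ \ge\ \min\{\,U[f_s^{(\alpha)}],\,U[f_d^{(\alpha)}]\,\}.
\end{equation*}
It therefore remains to check that each barycentric reflection becomes, after a translation and a dilation, a member of $\mathcal P_0^+([-a,a])$, and that $U$ is untouched by these transformations.

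Next I would record the elementary geometry of the barycentric reflections. Since $f\ge 0$, $f\in L^2$, and $\|f\|>0$, a single point carries no mass, so $-a<\alpha[f]<a$. Writing $c:=\alpha[f]$, the right reflection $f_d^{(\alpha)}$ is supported on the symmetric interval $[2c-a,\,a]=[c-r_d,\,c+r_d]$ with $r_d:=a-c>0$, and the left reflection $f_s^{(\alpha)}$ on $[-a,\,2c+a]=[c-r_s,\,c+r_s]$ with $r_s:=a+c>0$. Both are, by construction, nonnegative, piecewise polynomial, even about $x=c$, absolutely continuous with derivative in $L^2$ (all inherited from $f$; continuity at the reflection point $x=c$ holds because both one–sided values equal $f(c)$), and they vanish at the endpoints of their support because $f(\pm a)=0$. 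Hence each is, up to translation, an element of the relevant ``easy function'' family with the extra evenness and boundary–vanishing features. In particular Theorem~\ref{th:infimumevenf} indeed applies and the displayed inequality is legitimate.

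Then I would normalise the support. Let $h\in\{f_s^{(\alpha)},f_d^{(\alpha)}\}$ attain the minimum, supported on $[c-r,\,c+r]$ with $r>0$. Invoking Lemma~\ref{lemma:uncertaintyprop}(i) with $\lambda=1$, $\gamma=r/a$, $\tau=-c$, set
\begin{equation*}
\psi(x)\ :=\ h\!\left(\tfrac{r}{a}\,x+c\right),\qquad x\in[-a,a].
\end{equation*}
That lemma gives $U[\psi]=U[h]$. Translation and dilation preserve nonnegativity, the piecewise–polynomial structure (and the interface/endpoint continuity conditions in the definition of the classes), evenness about the origin, and the endpoint zeros, while mapping the support of $h$ exactly onto $[-a,a]$; hence $\psi\in\mathcal P_0^+([-a,a])$. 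Combining, $U[f]\ \ge\ U[h]=U[\psi]$, as claimed. If $w\in\{0,1\}$ (i.e.\ $f$ already coincides a.e.\ with one even reflection) the same construction applies verbatim.

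The one point genuinely requiring attention — rather than a real obstacle — is the bookkeeping emphasised in the remark preceding the corollary: $f_s^{(\alpha)}$ and $f_d^{(\alpha)}$ are symmetric about $x=\alpha[f]$, not about the origin, and their half–widths $a\pm\alpha[f]$ generally differ from $a$. This is exactly why the intermediate translation by $-\alpha[f]$ must precede the dilation, so that the normalised function lands in $\mathcal P_0^+([-a,a])$ and not merely in $\mathcal P_0^+$ of some other symmetric interval. Everything else is routine verification that the stated structural properties are stable under reflection, translation, and dilation.
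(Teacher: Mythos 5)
Your proposal is correct and follows essentially the same route as the paper's own proof: apply Theorem~\ref{th:infimumevenf} with reflections about the barycenter, then map the minimising reflection onto $[-a,a]$ via the affine change of variable $x\mapsto \tfrac{r}{a}x+\alpha[f]$ and invoke the invariance of $U$ from Lemma~\ref{lemma:uncertaintyprop}(i). The only cosmetic difference is that you rule out the degenerate half-widths by noting $-a<\alpha[f]<a$, whereas the paper simply observes that the construction also covers the case $r_s=0$ or $r_d=0$.
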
}

\begin{proof}
\textcolor{black}{Let $\alpha:=\alpha[f]$ be the barycenter. Consider the barycentric reflections $f_s^{(\alpha)},f_d^{(\alpha)}$ (as in Remark following Theorem~\ref{th:infimumevenf}): they are even with respect to $x=\alpha$, vanish at the endpoints of their supports, and satisfy
$$
U[f]\ \ge\ \min\{\,U[f_s^{(\alpha)}],\,U[f_d^{(\alpha)}]\,\}
$$
by Theorem~\ref{th:infimumevenf}.}

\textcolor{black}{Both $f_s^{(\alpha)}$ and $f_d^{(\alpha)}$ have support symmetric about $\alpha$ but not about the origin. Writing their half–lengths as
$$
r_s:=a+\alpha, \qquad r_d:=a-\alpha,
$$
we have $\operatorname{supp} f_s^{(\alpha)}=[\,\alpha-r_s,\alpha+r_s\,]=[-a,\,2\alpha+a]$ and $\operatorname{supp} f_d^{(\alpha)}=[\,\alpha-r_d,\alpha+r_d\,]=[\,2\alpha-a,\,a\,]$, and in both cases the boundary values are $0$.}

\textcolor{black}{Now define, for $i\in\{s,d\}$,
$$
\psi_i(x)\ :=\ f_i^{(\alpha)}\!\left(\frac{r_i}{a}\,x+\alpha\right),\qquad x\in\mathbb{R}.
$$
Then $\psi_i$ is even with respect to $0$, supported on $[-a,a]$, nonnegative, and vanishes at $\pm a$, hence $\psi_i\in\mathcal{P}_0^{+}([-a,a])$. By the translation/scale invariance of $U$ (Lemma~\ref{lemma:uncertaintyprop}(i)),
$$
U[\psi_i]\ =\ U\!\left[f_i^{(\alpha)}\right],\qquad i\in\{s,d\}.
$$
Choose $\psi=\psi_i$ with $U[\psi_i]=\min\{U[f_s^{(\alpha)}],U[f_d^{(\alpha)}]\}$. Then
$$
U[f]\ \ge\ \min\{U[f_s^{(\alpha)}],U[f_d^{(\alpha)}]\}\ =\ \min\{U[\psi_s],U[\psi_d]\}\ =\ U[\psi],
$$
as claimed. The degenerate case $r_s=0$ or $r_d=0$ (all mass on one side) is covered by the same construction.}
\end{proof}

\begin{example}[Uncentered failure vs centered restoration]
\textcolor{black}{Consider again the piecewise–cubic $f$ considered in Example \ref{ex:sign-balance}. A symbolic calculation (exact integrals and Plancherel) gives the barycenter
$$
\alpha[f]\approx 0.384209038102.
$$
If we reflect about $0$, the uncertainties are
$$
U[f]\approx 0.328205910036,\qquad
U\big[f_s^{(0)}\big]\approx 0.488135390966,\qquad
U\big[f_d^{(0)}\big]\approx 1.064558791510,
$$
so $U[f]<\min\{U[f_s^{(0)}],U[f_d^{(0)}]\}$ and the uncentered bound fails, exactly as
explained by the $-\alpha[f]^2$ term.}

\textcolor{black}{Reflecting instead about the barycenter, we obtain
$$
U\big[f_s^{(\alpha)}\big]\approx 0.233608189515,\qquad
U\big[f_d^{(\alpha)}\big]\approx 0.365009365360,
$$
hence
$$
U[f]\approx 0.328205910036\ \ge\ \min\{\,0.233608189515,\ 0.365009365360\,\}
=U\big[f_s^{(\alpha)}\big],
$$
which confirms the centered inequality.}
\end{example}

We now conclude the first part of the analysis. In the second part, we will introduce specific wavelet dictionaries and focus on particular families of functions, for which we will compute the uncertainty and identify minimizing elements.

We will see that the construction proposed in \cite{mazzoccoli2024refining} yields a minimizer within a specific family, see also \cite{rivero2023solution}. In contrast, for the family of convolution-type functions considered in the stability analysis of Gabor frames in \cite{de2018stability}, the infimum of the uncertainty turns out to be infinite.

\section{Computation of Uncertainty in Three Specific Cases}
\label{sec3}

We now focus on two broad families of functions and determine, in each case, the function that minimizes the uncertainty.

In particular, we will show that for the family considered in Section~\ref{sec:incertezzafamigliefunzioni}, the minimizing function coincides with the one presented in the article by \cite{mazzoccoli2024refining}.

On the other hand, for the family of convolution-type functions generated by the rectangular function $ \operatorname{rect}(x) := \mathds{1}_{[-\frac{1}{2},\frac{1}{2}]}(x) $, we show that the infimum of the uncertainty is not attained within the family, but only asymptotically, and equals $ \frac{1}{4} $. This value aligns with the lower bound given by the Heisenberg uncertainty principle (see \cite{busch2007heisenberg} for further discussion). 

The optimal uncertainty product
\begin{equation}
\sigma^2_x \sigma^2_{\omega} = \frac{1}{4}
\end{equation} 
is attained by the Morlet wavelet, defined as
\begin{equation}
\psi(t) = \pi^{-1/4} \, e^{i \omega_0 t} \, e^{-t^2/2},
\end{equation}
for any real $ \omega_0 \in \mathbb{R} $. This complex-valued wavelet achieves optimal joint time–frequency localization regardless of the modulation frequency $ \omega_0 $. See \cite{mallat1999wavelet} for further details.

\subsection{Analysis of the Uncertainty over a Specific Function Family}
\label{sec:incertezzafamigliefunzioni}

We now visualize the unmodulated functions $ g_n(x) $ and $ f_n(x) $ introduced in equations~\eqref{definizioneunog} and~\eqref{definizioneduef}, for various values of $ n $. These plots illustrate how the regularity and shape of each family evolves with the parameter $ n $, highlighting the increasing concentration and smoothness of the functions as $ n $ increases.
\begin{figure}[hbt!]
    \centering
    \begin{subfigure}[b]{0.48\textwidth}
        \centering
        \includegraphics[width=\textwidth]{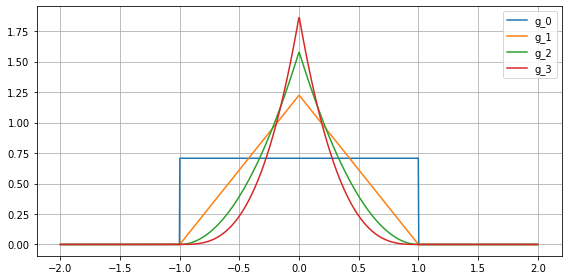}
        \caption{Profiles of the functions $ g_n(x) $ for $ n = 0, 1, 2, 3 $.}
        \label{fig:gn_family}
    \end{subfigure}
    \hfill
    \begin{subfigure}[b]{0.48\textwidth}
        \centering
        \includegraphics[width=\textwidth]{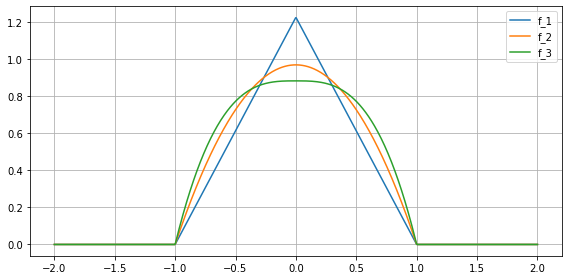}
        \caption{Profiles of the functions $ f_n(x) $ for $ n = 1, 2, 3$.}
        \label{fig:fn_family}
    \end{subfigure}
    \caption{Comparison of the unmodulated function families $ g_n(x) $ and $ f_n(x) $. As $ n $ increases, the functions become more localized and exhibit higher regularity near the origin.}
    \label{fig:gn_fn_comparison}
\end{figure}

\textcolor{black}{For spline-type atoms within affine dictionaries, the recent frame-set analysis provides an external benchmark to our minimization: it identifies parameter regimes where B-spline windows can or cannot generate Gabor frames (\cite{ghosh2025gabor,ghosh2025obstructions}). Related recent advances provide sharp uncertainty and local Lieb-type bounds directly in the wavelet setting, offering complementary constraints to our minimization within $\left\{G_{\gamma, n}\right\}$ and $\left\{F_{\gamma, n}\right\}$ (\cite{abreu2025donoho}). See also new Heisenberg-/Donoho-Stark-/logarithmic-type uncertainty principles derived for wavelet variants beyond the classical setting (\cite{dades2024heisenberg,dades2025new,wang2024benedicks}).}

\subsubsection{Some useful Lemmas}

To illustrate and compare the uncertainty properties of the two wavelet dictionaries introduced in Equations~\eqref{definizioneunog} and~\eqref{definizioneduef}, we compute the spatial and frequency means and variances of the associated atoms $ \mathcal{G}_{\gamma,n} $ and $ \mathcal{F}_{\gamma,n} $, respectively. These computations allow us to analyze how time–frequency localization depends on the structural form of the atoms. The results are summarized in Proposition \ref{prop:PV} and Proposition \ref{prop:PV2}.
\begin{Prop}[Uncertainty of $ \mathcal{G}_{\gamma,n}(x) $ dictionary]
\label{prop:PV}
Let the wavelet dictionary be defined as in equation~\eqref{definizioneunog}, and let $ \mathcal{G}_{\gamma,n}(x) $ denote an element of the set~\eqref{dizionario1}. Then, the spatial and frequency means of $ \mathcal{G}_{\gamma,n} $ are given by
$$
\alpha[\mathcal{G}_{\gamma,n}] = u, \quad \beta[\hat{\mathcal{G}}_{\gamma,n}] = 2\pi\xi.
$$

Moreover, the spatial and frequency variances of $ \mathcal{G}_{\gamma,n} $ are, respectively,
$$
\sigma_x^2[\mathcal{G}_{\gamma,n}] = \frac{t^2}{2n^2 + 5n + 3},
$$
and
$$
\sigma_\omega^2[\hat{\mathcal{G}}_{\gamma,n}] = \frac{n^2 (2n + 1)}{t^2 (2n - 1)},
$$
for all integers $ n \in \mathbb{N} $, with $ n \ne 0 $.
\end{Prop}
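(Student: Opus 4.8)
The plan is to push everything back to the centred, unit-scale, unmodulated profile by means of Lemma~\ref{lemma:uncertaintyprop}. Following \eqref{dizionario1} write $\mathcal G_{\gamma,n}(x)=g_n\!\big((x-u)/t\big)\,e^{2\pi i\xi x}$ with $g_n=\sqrt{(2n+1)/(2t)}\,h_n$ and $h_n(y):=(1-|y|)^n\mathds{1}_{[-1,1]}(y)$; the constant prefactor is irrelevant for normalized means, variances, and for $U$ (Lemma~\ref{lemma:uncertaintyprop}(iii)), the dilation has parameter $\gamma=1/t$, the translation is $u$, and the modulation is $\xi$. The explicit identities derived in the proof of Lemma~\ref{lemma:uncertaintyprop} then record how each statistic transports: under the dilation/translation $\alpha\mapsto t\,\alpha+u$, $\sigma_x^2\mapsto t^2\sigma_x^2$, $\sigma_\omega^2\mapsto t^{-2}\sigma_\omega^2$, $\beta\mapsto t^{-1}\beta$, and modulation by $e^{2\pi i\xi x}$ leaves the spatial quantities and both variances untouched while sending $\beta\mapsto\beta+2\pi\xi$. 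Thus the proposition reduces to four elementary computations for $h_n$ on $[-1,1]$, where $\|h_n\|_{L^2}^2=\int_{-1}^1(1-|y|)^{2n}\,dy=\tfrac{2}{2n+1}$.

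The two means are immediate from symmetry: $h_n$ is real and even, so $\alpha[h_n]=0$, and $\hat h_n$ is then real and even, whence $|\hat h_n|^2$ is even and $\beta[\hat h_n]=0$; transporting gives $\alpha[\mathcal G_{\gamma,n}]=u$ and $\beta[\hat{\mathcal G}_{\gamma,n}]=2\pi\xi$. For the spatial variance, $\sigma_x^2[h_n]=\|h_n\|_2^{-2}\int_{-1}^1 y^2(1-|y|)^{2n}\,dy=(2n+1)\int_0^1 y^2(1-y)^{2n}\,dy=(2n+1)\,B(3,2n+1)=\tfrac{2}{(2n+2)(2n+3)}=\tfrac{1}{(n+1)(2n+3)}$, and since $(n+1)(2n+3)=2n^2+5n+3$ the dilation yields $\sigma_x^2[\mathcal G_{\gamma,n}]=t^2/(2n^2+5n+3)$.

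For the frequency variance the clean route is to avoid computing $\hat h_n$ and to use Plancherel on the derivative: since $\beta[\hat h_n]=0$ and $\widehat{h_n'}(\omega)=i\omega\,\hat h_n(\omega)$, we get $\sigma_\omega^2[\hat h_n]=\tfrac{1}{2\pi\|h_n\|_2^2}\int_{\mathbb R}\omega^2|\hat h_n(\omega)|^2\,d\omega=\|h_n'\|_2^2/\|h_n\|_2^2$. Differentiating, $h_n'(y)=\mp n(1\mp y)^{n-1}$ for $\pm y\in(0,1)$, so $\|h_n'\|_2^2=2n^2\int_0^1(1-y)^{2n-2}\,dy=\tfrac{2n^2}{2n-1}$ and $\sigma_\omega^2[h_n]=\tfrac{2n+1}{2}\cdot\tfrac{2n^2}{2n-1}=\tfrac{n^2(2n+1)}{2n-1}$, giving $\sigma_\omega^2[\hat{\mathcal G}_{\gamma,n}]=\tfrac{n^2(2n+1)}{t^2(2n-1)}$. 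This is the only place where the hypothesis $n\neq 0$ genuinely enters: for $n\geq 1$ the profile $h_n$ is absolutely continuous and vanishes at $\pm1$, i.e.\ $h_n\in H^1_0(-1,1)$, so its weak derivative lies in $L^2$, no boundary term arises, and the identity $\widehat{h_n'}=i\omega\hat h_n$ is legitimate; for $n=0$ the profile is the rectangle, $h_0'$ carries Dirac masses at $\pm1$, $\sigma_\omega^2=+\infty$, and the stated formula (whose factor $2n-1$ would be negative) correctly ceases to hold. Beyond this regularity point I foresee no real obstacle: what remains is just the bookkeeping of the two Beta-type integrals and keeping the powers of $t$ consistent.
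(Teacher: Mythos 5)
Your proof is correct and rests on the same two computations as the paper's: the Beta integral $\int_0^1 y^2(1-y)^{2n}\,dy$ for the spatial variance, and the identity $\sigma_\omega^2=\|h_n'\|_2^2/\|h_n\|_2^2$ for the frequency variance (which the paper phrases equivalently as Parseval's identity for the cosine transform of $(1-y)^n$ on $[0,1]$, yielding the same $\pi n^2/(2n-1)$), together with the same parity arguments for the two means. The only organizational difference is that you strip out $u$, $t$, $\xi$ up front via the transport identities; just be aware that Lemma~\ref{lemma:uncertaintyprop} as actually proved covers only $\lambda f(\gamma x-\tau)$, so the modulation covariance ($\beta\mapsto\beta+2\pi\xi$ with both variances and $\alpha$, $\sigma_x^2$ unchanged) is an elementary fact you should state and verify rather than cite from that lemma — the paper instead absorbs the factor $e^{2\pi i\xi x}$ by computing $\hat{\mathcal{G}}_{\gamma,n}$ explicitly.
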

\begin{proof}
    We recall that the spatial and frequency means are defined as:
$$
\alpha[f] := \frac{1}{\|f\|^2} \int_{\mathbb{R}} x |f(x)|^2 dx, \quad \beta[\hat{f}] := \frac{1}{2\pi \|f\|^2} \int_{\mathbb{R}} \omega |\hat{f}(\omega)|^2 d\omega.
$$
Since $ \|\mathcal{G}_{\gamma,n}\|=\|g_{n}\| = 1 $ (as previously shown), we can compute both means directly without normalizing.

We first compute:
$$
\alpha[\mathcal{G}_{\gamma,n}] = \int_{\mathbb{R}} x\, |\mathcal{G}_{\gamma,n}(x)|^2 \, dx.
$$
Note that:
$$
|\mathcal{G}_{\gamma,n}(x)|^2 = \frac{2n+1}{2t} \left(1 - \left| \frac{x - u}{t} \right| \right)^{2n} \mathds{1}_{[u - t, u + t]}(x),
$$
since the exponential factor $ e^{2\pi i \xi x} $ has unit modulus and thus does not affect the squared amplitude.

We perform the change of variable $ y = \frac{x - u}{t} $, so $ x = t y + u $ and $ dx = t\,dy $. The integral becomes:
$$
\alpha[\mathcal{G}_{\gamma,n}] = \frac{2n+1}{2t} \int_{u - t}^{u + t} x \left(1 - \left| \frac{x - u}{t} \right| \right)^{2n} dx
= \frac{2n+1}{2t} \cdot t \int_{-1}^{1} (ty + u)(1 - |y|)^{2n} dy.
$$
Distributing and simplifying:
$$
\alpha[\mathcal{G}_{\gamma,n}] = \frac{2n+1}{2} \int_{-1}^{1} (ty + u)(1 - |y|)^{2n} dy
= \frac{2n+1}{2} \left[ t \int_{-1}^{1} y(1 - |y|)^{2n} dy + u \int_{-1}^{1} (1 - |y|)^{2n} dy \right].
$$
The first integral is zero because the integrand is an odd function over a symmetric interval. The second integral evaluates to 1 due to the normalization of the function $ \mathcal{G}_{\gamma,n} $. Therefore,
$$
\alpha[\mathcal{G}_{\gamma,n}] = \frac{2n+1}{2} \cdot u \cdot \int_{-1}^{1} (1 - |y|)^{2n} dy = u.
$$
We now compute
$$
\beta[\hat{\mathcal{G}}_{\gamma,n}] = \frac{1}{2\pi} \int_{\mathbb{R}} \omega |\hat{\mathcal{G}}_{\gamma,n}(\omega)|^2 \, d\omega\, ,
$$
by analyzing the Fourier transform of the function
$$
\mathcal{G}_{\gamma,n}(x) = \sqrt{\frac{2n+1}{2t}} \left(1 - \left| \frac{x - u}{t} \right| \right)^n \mathds{1}_{[u - t, u + t]}(x) \, e^{2\pi i \xi x}.
$$
We begin by applying a change of variable. Let $ y := \frac{x - u}{t} $, so that $ x = t y + u $ and $ dx = t \, dy $. Note that the support of the function becomes $ y \in [-1,1] $. The Fourier transform is then given by:
$$
\begin{aligned}
\hat{\mathcal{G}}_{\gamma,n}(\omega)
&= \int_{\mathbb{R}} \mathcal{G}_{\gamma,n}(x) \, e^{-i \omega x} \, dx \\
&= \sqrt{\frac{2n+1}{2t}} \int_{u - t}^{u + t} \left(1 - \left| \frac{x - u}{t} \right| \right)^n e^{2\pi i \xi x} e^{-i \omega x} \, dx \\
&= \sqrt{\frac{2n+1}{2t}} \cdot t \cdot e^{i(2\pi \xi - \omega)u} \int_{-1}^{1} (1 - |y|)^n e^{i(2\pi \xi - \omega)t y} \, dy.
\end{aligned}
$$
Let us denote
$$
\tilde{\omega} := 2\pi \xi - \omega,
$$
so that
$$
\hat{\mathcal{G}}_{\gamma,n}(\omega) = \sqrt{\frac{2n+1}{2t}} \cdot t \cdot e^{i \tilde{\omega} u} \int_{-1}^{1} (1 - |y|)^n e^{i \tilde{\omega} t y} \, dy.
$$
Observe that both $ \tilde{\omega} t $ and $ y $ are real, and the integrand $ (1 - |y|)^n e^{i \tilde{\omega} t y} $ is the product of an even function and a complex exponential. Since $ (1 - |y|)^n $ is even and real-valued, and $ e^{i \tilde{\omega} t y} = \cos(\tilde{\omega} t y) + i \sin(\tilde{\omega} t y) $, the imaginary part integrates to zero due to the oddness of the sine function over a symmetric interval.

Hence, we conclude that
$$
\hat{\mathcal{G}}_{\gamma,n}(\omega) = 2\sqrt{\frac{2n+1}{2t}} \cdot t \cdot e^{i \tilde{\omega} u} \int_{0}^{1} (1 - y)^n \cos(\tilde{\omega} t y) \, dy.
$$
This provides an expression for the Fourier transform of $ \mathcal{G}_{\gamma,n} $ in terms of the cosine Fourier transform of the compactly supported, even function $ (1 - |y|)^n $. 

By introducing the function $ F_n $, defined as  
\begin{equation}
\label{eq:Fn}
F_n(\eta) := \int_0^1 (1 - y)^n \cos(\eta y) \, dy,
\end{equation}
which corresponds to the cosine Fourier transform of the real-valued, compactly supported function $ f_n(y) := (1 - y)^n \, \mathds{1}_{[0,1]}(y) $, the mean frequency $ \beta[\hat{\mathcal{G}}_{\gamma,n}] $ can be equivalently expressed in terms of $ F_n $ as follows:
$$
\beta[\hat{\mathcal{G}}_{\gamma,n}] = \frac{2n+1}{\pi} \cdot t  \int_{\mathbb{R}} \omega F_n^2(\omega t-2\pi \xi t) \, d\omega
$$
We now perform the change of variable $ \eta = \omega t - 2\pi \xi t $, which implies $ \omega = \frac{\eta}{t} + 2\pi \xi $ and $ d\omega = \frac{1}{t} d\eta $. Substituting into the integral, we obtain:
$$
\beta[\hat{\mathcal{G}}_{\gamma,n}] = \frac{2n+1}{\pi} \int_{\mathbb{R}} \left( \frac{\eta}{t} + 2\pi \xi \right) F_n^2(\eta)  \, d\eta.
$$
This simplifies to:
$$
\beta[\hat{\mathcal{G}}_{\gamma,n}] = \frac{2n+1}{\pi t} \int_{\mathbb{R}} \eta F_n^2(\eta) \, d\eta + \frac{2n+1}{\pi} \cdot 2\pi \xi \int_{\mathbb{R}} F_n^2(\eta) \, d\eta.
$$
Since $ F_n(\eta) $ is an even function, the first term vanishes because the integrand $ \eta F_n^2(\eta) $ is odd. Thus, we are left with:
$$
\beta[\hat{\mathcal{G}}_{\gamma,n}] = 2\xi(2n+1) \int_{\mathbb{R}} F_n^2(\eta) \, d\eta.
$$
Using the identity  
$$
\int_{\mathbb{R}} F_n^2(\eta) \, d\eta = \frac{\pi}{2n + 1},
$$  
which follows from Parseval's theorem applied to the cosine Fourier transform of the function $ f_n(y) := (1 - y)^n \, \mathds{1}_{[0,1]}(y) $, we obtain the final expression for the frequency mean:
$$
\beta[\hat{\mathcal{G}}_{\gamma,n}] = 2\pi \xi.
$$

We recall that the spatial variance is defined as:
$$
\sigma_x^2[f] := \frac{1}{\|f\|^2} \int_{\mathbb{R}} (x - \alpha[f])^2 |f(x)|^2 \, dx.
$$
Since $ \| \mathcal{G}_{\gamma,n} \| = 1 $ and $ \alpha[\mathcal{G}_{\gamma,n}] = u $ (as shown in the previous point), we can write:
$$
\sigma_x^2[\mathcal{G}_{\gamma,n}] = \int_{\mathbb{R}} (x - u)^2 |\mathcal{G}_{\gamma,n}(x)|^2 \, dx.
$$
We perform the change of variable:
$$
y := \frac{x - u}{t} \quad \Rightarrow \quad x = t y + u, \quad dx = t \, dy,
$$
which transforms the domain of integration to $ y \in [-1, 1] $. The variance becomes:
$$
\sigma_x^2[\mathcal{G}_{\gamma,n}] = \frac{2n+1}{2t} \int_{u - t}^{u + t} (x - u)^2 \left(1 - \left| \frac{x - u}{t} \right| \right)^{2n} dx
= \frac{2n+1}{2} t^2 \int_{-1}^{1} y^2 (1 - |y|)^{2n} dy.
$$
Since the integrand is even, we simplify:
$$
\int_{-1}^{1} y^2 (1 - |y|)^{2n} dy = 2 \int_0^1 y^2 (1 - y)^{2n} dy.
$$
Therefore:
$$
\sigma_x^2[\mathcal{G}_{\gamma,n}] = (2n+1)t^2 \cdot \int_0^1 y^2 (1 - y)^{2n} dy.
$$
This integral can be computed explicitly using the Beta function:
$$
\int_0^1 y^2 (1 - y)^{2n} dy = B(3, 2n + 1) = \frac{\Gamma(3)\Gamma(2n + 1)}{\Gamma(2n + 4)} = \frac{2! \cdot (2n)!}{(2n + 3)!}.
$$
Plugging this into the expression, we obtain:
$$
\sigma_x^2[\mathcal{G}_{\gamma,n}] = (2n+1)t^2 \cdot \frac{2(2n)!}{(2n+3)!}.
$$
Using the identity:
$$
\frac{(2n+1)(2n)!}{(2n+3)!} = \frac{1}{(2n+2)(2n+3)},
$$
we arrive at the simplified closed-form expression:
$$
\sigma_x^2[\mathcal{G}_{\gamma,n}] = \frac{2 t^2}{(2n + 2)(2n + 3)}=\frac{t^2}{2n^2+5n+3}.
$$

We now compute an explicit expression for the frequency variance $ \sigma_\omega^2[\hat{\mathcal{G}}_{\gamma,n}] $ associated with the wavelet atom $ \mathcal{G}_{\gamma,n} $. As previously shown, the frequency mean is $ \beta[\hat{\mathcal{G}}_{\gamma,n}] =2\pi \xi $, so by definition, the variance is given by:
$$
\sigma_\omega^2[\hat{\mathcal{G}}_{\gamma,n}] 
:= \frac{1}{2\pi} \int_{\mathbb{R}} (\omega - 2\pi \xi)^2 \cdot |\hat{\mathcal{G}}_{\gamma,n}(\omega)|^2 \, d\omega.
$$
As in the previous computation, we express the modulus squared of the Fourier transform in terms of the function $F_n$ defined by \eqref{eq:Fn}:
$$
\sigma_\omega^2[\hat{\mathcal{G}}_{\gamma,n}]
= (2n+1) \frac{t}{\pi} \int_{\mathbb{R}} (\omega - 2\pi \xi)^2 \cdot F_n^2\big( \omega t-2\pi \xi t \big) \, d\omega.
$$
Now apply the change of variable:
$$
\eta := \omega t-2\pi \xi t  \quad \Rightarrow \quad \omega = \frac{\eta}{t} + 2\pi \xi , \quad d\omega = \frac{1}{t} d\eta.
$$
Then:
$$
\sigma_\omega^2[\hat{\mathcal{G}}_{\gamma,n}]
= \frac{2n+1}{\pi t^2} \int_{\mathbb{R}} \eta ^2 \cdot F_n^2(\eta)  d\eta.
$$
We now invoke Parseval's Theorem. Let $ f(y) := (1 - y)^n \cdot \mathds{1}_{[0,1]}(y) $ and fix $n\neq 0$. Then 
$ F_n(\eta) = \mathcal{F}_{\cos}[f](\eta) $, the cosine transform of $ f $. Parseval’s identity for cosine transform gives:
$$
\int_{\mathbb{R}} \eta^2 F_n^2(\eta) \, d\eta = \pi \int_0^1 |f'(y)|^2 \, dy = \pi \int_0^1 n^2 (1 - y)^{2n - 2} dy = \pi \cdot \frac{n^2}{2n - 1}.
$$
Therefore:
$$
\sigma_\omega^2[\hat{\mathcal{G}}_{\gamma,n}]
=  \frac{2n+1}{\pi t^2} \cdot \pi \cdot \frac{n^2}{2n - 1}
=\frac{n^2 (2n + 1)}{t^2 (2n - 1)} .
$$
for $n\in \mathbb N$, $n\neq 0$.

\end{proof}

\begin{remark}
For $ n = 0 $, it is shown in \cite{mazzoccoli2024refining} that the frequency variance diverges:
\begin{equation}
\sigma_{\omega}^2[\hat{g}_0] = +\infty.
\label{formulavarianzafrequenzag0}
\end{equation}
For $ n = 1 $, the same reference establishes the explicit formula:
\begin{equation}
\sigma_{\omega}^2[\hat{g}_1] = \frac{3}{t^2}.
\label{formulavarianzafrequenzag1}
\end{equation}
\end{remark}

\textcolor{black}{
\begin{example}[Verification of Theorem \ref{th:tent1} for the dictionary $\{\mathcal{G}_{\gamma,n}\}_{\gamma\in\Gamma}$ at $n=1,2,3$.]
Fix $u\in\mathbb{R}$, $t>0$, and $\xi\in\mathbb{R}$. For $n=1,2,3$ consider
$$
h_{n,u,t}(x):=\sqrt{\frac{2n+1}{2t}}\,
\Bigl(1-\Bigl|\tfrac{x-u}{t}\Bigr|\Bigr)^{\!n}\,\mathds{1}_{[u-t,u+t]}(x)\; e^{2\pi i \xi x}.
$$
A direct change of variable $j=(x-u)/t$ shows that the three functions are normalized:
$$
\|h_{n,u,t}\|_{2}^{2}
=\frac{2n+1}{2t}\int_{u-t}^{u+t}\!\Bigl(1-\Bigl|\tfrac{x-u}{t}\Bigr|\Bigr)^{\!2n}\!dx
=\frac{2n+1}{2}\int_{-1}^{1}(1-|j|)^{2n}dj=1.
$$
Hence, by symmetry and modulation, the spatial and frequency means are
$$
\alpha[h_{n,u,t}]=u,\qquad \beta[\widehat{h}_{n,u,t}]=2\pi\xi.
$$
The spatial variances follow from the same substitution and the Beta function identity
$\int_{0}^{1} j^{2}(1-j)^{2n}dj=\frac{2}{(2n+1)(2n+2)(2n+3)}$, yielding
$$
\sigma_x^{2}[h_{n,u,t}]
=\frac{t^{2}}{\,2n^{2}+5n+3\,},
\quad\text{whence}\quad
\sigma_x^{2}[h_{1,u,t}]=\frac{t^{2}}{10},\;
\sigma_x^{2}[h_{2,u,t}]=\frac{t^{2}}{21},\;
\sigma_x^{2}[h_{3,u,t}]=\frac{t^{2}}{36}.
$$
For the Fourier transforms, write $s:=(2\pi\xi-\omega)t$. A direct computation gives
\begin{align*}
\widehat{h}_{1,u,t}(\omega)
&=e^{i(2\pi\xi-\omega)u}\sqrt{\frac{3t}{2}}\,
\Bigl(\frac{\sin(s/2)}{s/2}\Bigr)^{\!2},\\[2mm]
\widehat{h}_{2,u,t}(\omega)
&=e^{i(2\pi\xi-\omega)u}\sqrt{\frac{5t}{2}}\,
\frac{4\,\bigl(s-\sin s\bigr)}{s^{3}},\\[2mm]
\widehat{h}_{3,u,t}(\omega)
&=e^{i(2\pi\xi-\omega)u}\sqrt{\frac{7t}{2}}\,
\frac{12\bigl(\cos s-1+\tfrac{s^{2}}{2}\bigr)}{s^{4}}.
\end{align*}
By Plancherel, $\|\widehat{h}_{n,u,t}\|_{2}^{2}=2\pi$ for $n=1,2,3$. Using the identity
$$
\frac{1}{2\pi}\int_{\mathbb{R}}(\omega-2\pi\xi)^{2}\,|\widehat{h}_{n,u,t}(\omega)|^{2}\,d\omega
=\bigl\|\,\partial_x h_{n,u,t}\,\bigr\|_{2}^{2},
$$
one obtains the frequency variances
$$
\sigma_{\omega}^{2}[\widehat{h}_{n,u,t}]
=\frac{n^{2}(2n+1)}{(2n-1)\,t^{2}},
\quad\text{thus}\quad
\sigma_{\omega}^{2}[\widehat{h}_{1,u,t}]=\frac{3}{t^{2}},\;
\sigma_{\omega}^{2}[\widehat{h}_{2,u,t}]=\frac{20}{3t^{2}},\;
\sigma_{\omega}^{2}[\widehat{h}_{3,u,t}]=\frac{63}{5t^{2}}.
$$
Consequently, the uncertainty products
$$
U[h_{n,u,t}]:=\sigma_{x}^{2}[h_{n,u,t}]\,\sigma_{\omega}^{2}[\widehat{h}_{n,u,t}]
$$
agree with the closed-form expression in Proposition \ref{prop:PV} (and hence verify Theorem \ref{th:tent1}) and read
$$
U[h_{1,u,t}]=\frac{3}{10},\qquad
U[h_{2,u,t}]=\frac{20}{63},\qquad
U[h_{3,u,t}]=\frac{7}{20},
$$
which are increasing with $n$.
\end{example}}

\begin{Prop}[Uncertainty of $ \mathcal{F}_{\gamma,n} $ Dictionary]
\label{prop:PV2}
Let the wavelet dictionary be defined as in equation~\eqref{definizioneduef}, and let $ \mathcal{F}_{\gamma,n}(x) $ denote an element of the set~\eqref{dizionario2}. Then the spatial and frequency means of $ \mathcal{F}_{\gamma,n} $ are given by:
$$
\alpha[\mathcal{F}_{\gamma,n}] = u, \qquad \beta[\hat{\mathcal{F}}_{\gamma,n}] = 2\pi \xi.
$$
Moreover, the spatial and frequency variances of $ \mathcal{F}_{\gamma,n} $ are, respectively,
$$
\sigma_x^2[\mathcal{F}_{\gamma,n}] =  \frac{2 n^2+3n+1}{3 \left(2 n^2+9 n+9\right)}\, t^2 ,
$$
and
$$
\sigma_\omega^2[\hat{\mathcal{F}}_{\gamma,n}] = \frac{(2n+1)(n+1)}{2(2n-1)  t^2} ,
$$
for all integers $ n \in \mathbb{N} $, with $ n \ne 0 $.
\end{Prop}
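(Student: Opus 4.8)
\textbf{Proof proposal for Proposition~\ref{prop:PV2}.}
The plan is to mirror the proof of Proposition~\ref{prop:PV} line by line, using the scale–translation–modulation invariance of Lemma~\ref{lemma:uncertaintyprop} to reduce every quantity to an integral of the normalized mother profile $\varphi_n(y):=\sqrt{(2n+1)(n+1)/(4n^2)}\,(1-|y|^n)\,\mathds 1_{[-1,1]}(y)$ on $[-1,1]$. First I would recall the normalization $\|f_n\|=1$ already stated in~\eqref{norm_fn_eng}, which rests on $\int_{-1}^1(1-|y|^n)^2\,dy=2\bigl(1-\tfrac{2}{n+1}+\tfrac{1}{2n+1}\bigr)=\tfrac{4n^2}{(n+1)(2n+1)}$; the collapse of the numerator to $4n^2$ over the common denominator $(n+1)(2n+1)$ is the first of the elementary polynomial cancellations that will recur. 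Since $|y|^n$ is even and $e^{2\pi i\xi x}$ is unimodular, the density $|\mathcal F_{\gamma,n}(x)|^2$ is symmetric about $u$, so $\alpha[\mathcal F_{\gamma,n}]=u$ at once; and because the unmodulated dilated profile is real and even, its Fourier transform is real, even and has vanishing first moment, whence $\beta[\hat{\mathcal F}_{\gamma,n}]=2\pi\xi$ (one could equally repeat the $F_n$–Parseval argument of Proposition~\ref{prop:PV} verbatim).

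For the spatial variance I would perform the substitution $y=(x-u)/t$ to obtain $\sigma_x^2[\mathcal F_{\gamma,n}]=t^2\,\tfrac{(2n+1)(n+1)}{4n^2}\int_{-1}^1 y^2(1-|y|^n)^2\,dy$ and then evaluate $\int_{-1}^1 y^2(1-|y|^n)^2\,dy=2\bigl(\tfrac13-\tfrac{2}{n+3}+\tfrac{1}{2n+3}\bigr)$. Exactly as in the normalization step, putting everything over $3(n+3)(2n+3)$ collapses the numerator to $4n^2$, giving $\int_{-1}^1 y^2(1-|y|^n)^2\,dy=\tfrac{4n^2}{3(n+3)(2n+3)}$, hence $\sigma_x^2[\mathcal F_{\gamma,n}]=\tfrac{(2n+1)(n+1)}{3(n+3)(2n+3)}\,t^2=\tfrac{2n^2+3n+1}{3(2n^2+9n+9)}\,t^2$, the claimed formula; the polynomial integrals may equivalently be written through the Beta function $B(3,\cdot)$ as in Proposition~\ref{prop:PV}.

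For the frequency variance, after a modulation-and-dilation reduction the relevant identity is $\sigma_\omega^2[\hat{\mathcal F}_{\gamma,n}]=t^{-2}\,\|\varphi_n'\|_{L^2(-1,1)}^2$, where the Plancherel step $\sigma_\omega^2[\hat f]=\|f'\|_2^2/\|f\|_2^2$ is the one already used in~\eqref{eq:sigw-f}. Differentiating gives $\varphi_n'(y)=-\sqrt{(2n+1)(n+1)/(4n^2)}\;n\,|y|^{n-1}\operatorname{sgn}(y)$ on $(-1,1)$, so $\|\varphi_n'\|_2^2=\tfrac{(2n+1)(n+1)}{4}\int_{-1}^1|y|^{2n-2}\,dy=\tfrac{(2n+1)(n+1)}{2(2n-1)}$, and therefore $\sigma_\omega^2[\hat{\mathcal F}_{\gamma,n}]=\tfrac{(2n+1)(n+1)}{2(2n-1)\,t^2}$.

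The only point that genuinely needs care — and the reason for the restriction $n\ne 0$ — is the convergence of $\int_{-1}^1|y|^{2n-2}\,dy$, which is finite precisely when $2n-2>-1$, i.e. for integer $n\ge 1$; equivalently $\varphi_n\in H^1_0(-1,1)$ exactly when $n\ge 1$, and this is what legitimizes the Plancherel identity used above. For $n=0$ one has $f_0=\operatorname{rect}$ up to a multiplicative constant and $\sigma_\omega^2=+\infty$, as recorded in Section~\ref{sec2}. I expect the substantive work to be limited to verifying these three numerator cancellations and to stating precisely the integrability threshold; the remainder is a transcription of the argument for Proposition~\ref{prop:PV}.
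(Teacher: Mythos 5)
Your proposal is correct and follows essentially the same route as the paper: the same substitution $y=(x-u)/t$, the same polynomial integrals $\int_0^1(1-y^n)^2$ and $\int_0^1 y^2(1-y^n)^2$ with the identical $4n^2$ cancellations, and the same derivative-energy computation $n^2\int_0^1 y^{2n-2}\,dy=n^2/(2n-1)$ for the frequency variance. The only cosmetic differences are that you invoke Plancherel directly on the line ($\sigma_\omega^2[\hat f]=\|f'\|_2^2/\|f\|_2^2$) instead of passing through the cosine transform $H_n(\eta)=\int_0^1(1-y^n)\cos(\eta y)\,dy$ and its Parseval identity as the paper does, and you shortcut the two mean computations by symmetry rather than by explicit evaluation; your remark on the integrability threshold at $n=0$ is a welcome addition that the paper leaves implicit.
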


\begin{proof}
We begin with the spatial mean:
$$
\alpha[\mathcal{F}_{\gamma,n}] = \int_{\mathbb{R}} x \cdot |\mathcal{F}_{\gamma,n}(x)|^2 \, dx.
$$
Since the complex exponential $ e^{2\pi i \xi x} $ has unit modulus, it does not affect the integral, so we can write:
\begin{equation}
\label{eq:F2}
|\mathcal{F}_{\gamma,n}(x)|^2 = \frac{(2n+1)(n+1)}{4n^2 t} \left(1 - \left| \frac{x - u}{t} \right|^n \right)^2 \mathds{1}_{[u - t, u + t]}(x).
\end{equation}
Performing the change of variable $ y = \frac{x - u}{t} $, so that $ x = ty + u $ and $ dx = t\,dy $, we obtain:
$$
\alpha[\mathcal{F}_{\gamma,n}] = \frac{(2n+1)(n+1)}{4n^2} \int_{-1}^{1} (ty + u) \left(1 - |y|^n \right)^2 \, dy.
$$
This expression splits into two terms:
$$
\alpha[\mathcal{F}_{\gamma,n}] = \frac{(2n+1)(n+1)}{4n^2} \left[ t \int_{-1}^{1} y \left(1 - |y|^n \right)^2 dy + u \int_{-1}^{1} \left(1 - |y|^n \right)^2 dy \right].
$$
The first integral is zero, since the integrand is an odd function over a symmetric interval. The second integral is equal to
$$
\int_{-1}^{1} \left(1 - |y|^n \right)^2 dy = \frac{4n^2}{(2n+1)(n+1)},
$$
which is chosen precisely to normalize the function in $ L^2 $. Hence, $\alpha[\mathcal{F}_{\gamma,n}] = u$.

We now compute the frequency mean:
$$
\beta[\hat{\mathcal{F}}_{\gamma,n}] = \frac{1}{2\pi} \int_{\mathbb{R}} \omega \cdot |\hat{\mathcal{F}}_{\gamma,n}(\omega)|^2 \, d\omega.
$$
We begin by computing the Fourier transform of the function $ \mathcal{F}_{\gamma,n}(x) $, defined as:
$$
\mathcal{F}_{\gamma,n}(x) = \sqrt{\frac{(2n+1)(n+1)}{4n^2 t}} \left(1 - \left| \frac{x - u}{t} \right|^n \right) \mathds{1}_{[u - t, u + t]}(x) \cdot e^{2\pi i \xi x}.
$$
We perform the change of variable $ y = \frac{x - u}{t} $, which leads to:
$$
\hat{\mathcal{F}}_{\gamma,n}(\omega) = \sqrt{\frac{(2n+1)(n+1)}{4n^2 t}} \cdot t \cdot e^{i(2\pi \xi - \omega) u} \int_{-1}^{1} (1 - |y|^n) e^{i(2\pi \xi - \omega)t y} \, dy.
$$
Since the function $ (1 - |y|^n) $ is real and even, and the exponential can be expressed in terms of cosine and sine, the integral reduces to a cosine transform:
\begin{equation*}
\hat{\mathcal{F}}_{\gamma,n}(\omega) = 2t \cdot \sqrt{\frac{(2n+1)(n+1)}{4n^2 t}} \cdot e^{i(2\pi \xi - \omega) u} \int_{0}^{1} (1 - y^n) \cos\big((2\pi \xi - \omega)t y\big) \, dy.
\end{equation*}
We define the auxiliary function:
$$
H_n(\eta) := \int_0^1 (1 - y^n) \cos(\eta y) \, dy,
$$
so that the Fourier transform becomes:
\begin{equation}
\label{eq:Fnfourier}
\hat{\mathcal{F}}_{\gamma,n}(\omega) = \sqrt{t} \cdot \sqrt{\frac{(2n+1)(n+1)}{n^2}} \cdot e^{i(2\pi \xi - \omega) u} \cdot H_n\big((\omega-2\pi \xi)t\big).    
\end{equation}
Hence, its squared modulus, substituted into the definition of $ \beta[\hat{\mathcal{F}}_{\gamma,n}] $, yields:
$$
\beta[\hat{\mathcal{F}}_{\gamma,n}] = \frac{t(2n+1)(n+1)}{2\pi n^2} \int_{\mathbb{R}} \omega \cdot H_n^2\big((\omega-2\pi \xi)t\big) \, d\omega.
$$
We now perform the change of variable:
$$
\eta := (\omega-2\pi \xi)t \quad \Rightarrow \quad \omega = 2\pi \xi + \frac{\eta}{t}, \quad d\omega = \frac{1}{t} d\eta.
$$
Thus, the integral becomes:
$$
\begin{aligned}
\beta[\hat{\mathcal{F}}_{\gamma,n}]
&= \frac{t(2n+1)(n+1)}{2\pi n^2} \cdot \int_{\mathbb{R}} \left(2\pi \xi + \frac{\eta}{t} \right) H_n^2(\eta) \cdot \left(\frac{1}{t} \right) \, d\eta \\
&= \frac{(2n+1)(n+1)}{2\pi n^2} \cdot \int_{\mathbb{R}} \left( \frac{\eta}{t} +2\pi \xi \right) H_n^2(\eta) \, d\eta.
\end{aligned}
$$
We now split the integral:
$$
\beta[\hat{\mathcal{F}}_{\gamma,n}] = \frac{(2n+1)(n+1)}{2\pi n^2} \left[ \frac{1}{t} \int_{\mathbb{R}} \eta \cdot H_n^2(\eta) \, d\eta + 2\pi \xi \int_{\mathbb{R}} H_n^2(\eta) \, d\eta \right].
$$
Since $ H_n(\eta) $ is an even function, the first integral vanishes. Therefore, we are left with:
$$
\beta[\hat{\mathcal{F}}_{\gamma,n}] = \frac{(2n+1)(n+1)}{2\pi n^2} \cdot 2\pi \xi \cdot \int_{\mathbb{R}} H_n^2(\eta) \, d\eta.
$$
Using Parseval's identity, applied to the cosine transform of the function $ y \mapsto 1 - y^n $ on $[0,1]$, we have:
$$
\int_{\mathbb{R}} H_n^2(\eta) \, d\eta = \pi \int_0^1 (1 - y^n)^2 \, dy.
$$
This integral simplifies to:
$$
\int_0^1 (1 - y^n)^2 \, dy = \int_0^1 \left( 1 - 2y^n + y^{2n} \right) dy = 1 - \frac{2}{n+1} + \frac{1}{2n+1}.
$$
Therefore, the full expression becomes $\beta[\hat{\mathcal{F}}_{\gamma,n}] = 2\pi \xi$.

We now compute the spatial variance $ \sigma_x^2[\mathcal{F}_{\gamma,n}] $, which is defined as:
$$
\sigma_x^2[\mathcal{F}_{\gamma,n}] := \int_{\mathbb{R}} (x - \alpha[\mathcal{F}_{\gamma,n}])^2 \cdot |\mathcal{F}_{\gamma,n}(x)|^2 \, dx.
$$
From the previous calculation, we already know that $ \alpha[\mathcal{F}_{\gamma,n}] = u $, so the variance simplifies to:
$$
\sigma_x^2[\mathcal{F}_{\gamma,n}] = \int_{\mathbb{R}} (x - u)^2 \cdot |\mathcal{F}_{\gamma,n}(x)|^2 \, dx.
$$
Recalling the \eqref{eq:F2}, we apply the change of variable $ y := \frac{x - u}{t} $, so that $ x = t y + u $, $ dx = t \, dy $, and the support becomes $ y \in [-1,1] $. This yields:
$$
\begin{aligned}
\sigma_x^2[\mathcal{F}_{\gamma,n}] 
&= \frac{(2n+1)(n+1)}{4n^2 t} \int_{u - t}^{u + t} (x - u)^2 \left(1 - \left| \frac{x - u}{t} \right|^n \right)^2 \, dx \\
&= \frac{(2n+1)(n+1)}{4n^2 t} \cdot t^3 \int_{-1}^{1} y^2 (1 - |y|^n)^2 \, dy.
\end{aligned}
$$
Since the integrand is even, we restrict to the interval $ [0,1] $ and double the integral:
$$
\int_{-1}^{1} y^2 (1 - |y|^n)^2 \, dy = 2 \int_0^1 y^2 (1 - y^n)^2 \, dy.
$$
We expand the squared term:
$$
(1 - y^n)^2 = 1 - 2y^n + y^{2n},
$$
and compute the integral:
$$
\int_0^1 y^2 (1 - y^n)^2 \, dy = \int_0^1 \left( y^2 - 2y^{n+2} + y^{2n+2} \right) \, dy = \frac{1}{3} - \frac{2}{n+3} + \frac{1}{2n+3}.
$$
Therefore, the final expression for the spatial variance becomes:
$$
\sigma_x^2[\mathcal{F}_{\gamma,n}] = \frac{2 n^2+3n+1}{3 \left(2 n^2+9 n+9\right)}\, t^2 .
$$

We now compute the frequency variance $ \sigma_\omega^2[\hat{\mathcal{F}}_{\gamma,n}] $, defined as
$$
\sigma_\omega^2[\hat{\mathcal{F}}_{\gamma,n}] := \frac{1}{2\pi} \int_{\mathbb{R}} (\omega - \beta[\hat{\mathcal{F}}_{\gamma,n}])^2 \cdot |\hat{\mathcal{F}}_{\gamma,n}(\omega)|^2 \, d\omega.
$$
From the previous calculation, we know that $ \beta[\hat{\mathcal{F}}_{\gamma,n}] = 2\pi \xi $. Therefore, we can rewrite the integral as:
$$
\sigma_\omega^2[\hat{\mathcal{F}}_{\gamma,n}] = \frac{1}{2\pi} \int_{\mathbb{R}} (\omega - 2\pi \xi)^2 \cdot |\hat{\mathcal{F}}_{\gamma,n}(\omega)|^2 \, d\omega.
$$
From \eqref{eq:Fnfourier}, we know:
$$
|\hat{\mathcal{F}}_{\gamma,n}(\omega)|^2 = \frac{(2n+1)(n+1)}{n^2}\, t \cdot H_n^2(t(\omega-2\pi \xi)),
$$
Performing the change of variable $ \eta = t(\omega-2\pi \xi) $, so that $ \omega = 2\pi \xi + \frac{\eta}{t} $ and $ d\omega = \frac{1}{t} \, d\eta $, we obtain:
$$
\sigma_\omega^2[\hat{\mathcal{F}}_{\gamma,n}]= \frac{(2n+1)(n+1)}{2\pi n^2 t^2} \cdot \int_{\mathbb{R}} \eta^2 H_n^2(\eta) \, d\eta.
$$
We now apply Parseval’s identity for the cosine transform. Let
$$
f_n(y) := (1 - y^n) \cdot \mathds{1}_{[0,1]}(y),
$$
so that $ H_n(\eta) $ is the cosine transform of $ f_n(y) $. Then Parseval's identity gives:
$$
\int_{\mathbb{R}} \eta^2 H_n^2(\eta) \, d\eta = \pi \int_0^1 |f_n'(y)|^2 \, dy.
$$
We compute:
$$
f_n'(y) = -n y^{n-1}, \quad \Rightarrow \quad |f_n'(y)|^2 = n^2 y^{2n-2}.
$$
Hence,
$$
\int_{\mathbb{R}} \eta^2 H_n^2(\eta) \, d\eta = \pi n^2 \int_0^1 y^{2n-2} \, dy = \pi n^2 \cdot \frac{1}{2n - 1}.
$$
Substituting back into the expression for the variance, we conclude:
$$
\sigma_\omega^2[\hat{\mathcal{F}}_{\gamma,n}] = \frac{(2n+1)(n+1)}{2(2n-1)  t^2}.
$$

\end{proof}

\textcolor{black}{\begin{example}[Explicit check of Theorem \ref{th:tent2} for the dictionary $\{\mathcal{F}_{\gamma,n}\}_{\gamma\in\Gamma}$ at $n=1,2,3$.]
Fix $u\in\mathbb{R}$, $t>0$, and $\xi\in\mathbb{R}$. Consider
\begin{align*}
h_{1,u,t}(x)&:=\sqrt{\frac{3}{2t}}\Bigl(1-\Bigl|\tfrac{x-u}{t}\Bigr|\Bigr)\,\mathds{1}_{[u-t,u+t]}(x)\,e^{2\pi i\xi x},\\
h_{2,u,t}(x)&:=\sqrt{\frac{15}{16t}}\Bigl(1-\Bigl|\tfrac{x-u}{t}\Bigr|^{2}\Bigr)\,\mathds{1}_{[u-t,u+t]}(x)\,e^{2\pi i\xi x},\\
h_{3,u,t}(x)&:=\sqrt{\frac{7}{9t}}\Bigl(1-\Bigl|\tfrac{x-u}{t}\Bigr|^{3}\Bigr)\,\mathds{1}_{[u-t,u+t]}(x)\,e^{2\pi i\xi x}.
\end{align*}
With the change of variable $j=(x-u)/t$, one checks that $\|h_{n,u,t}\|_{2}=1$ for $n=1,2,3$:
$$
\|h_{n,u,t}\|_{2}^{2}=c_n\int_{-1}^{1}\!\bigl(1-|j|^{n}\bigr)^{2}\,dj=1
\quad\bigl(c_1=\tfrac{3}{2},~c_2=\tfrac{15}{16},~c_3=\tfrac{7}{9}\bigr).
$$
By symmetry and modulation, the spatial and frequency means are
$$
\alpha[h_{n,u,t}]=u,\qquad \beta[\widehat{h}_{n,u,t}]=2\pi\xi.
$$
For the spatial variances, the same substitution yields
$$
\sigma_x^{2}[h_{n,u,t}]
=\frac{1}{\|h_{n,u,t}\|_{2}^{2}}\int_{u-t}^{u+t}(x-u)^{2}|h_{n,u,t}(x)|^{2}dx
= t^{2}\,\frac{2n^{2}+3n+1}{3\,(2n^{2}+9n+9)}.
$$
In particular,
$$
\sigma_x^{2}[h_{1,u,t}]=\frac{t^{2}}{10},\qquad
\sigma_x^{2}[h_{2,u,t}]=\frac{t^{2}}{7},\qquad
\sigma_x^{2}[h_{3,u,t}]=\frac{14\,t^{2}}{81}.
$$
Let $s:=(2\pi\xi-\omega)t$. A direct computation gives the Fourier transforms
\begin{align*}
\widehat{h}_{1,u,t}(\omega)
&=e^{i(2\pi\xi-\omega)u}\sqrt{\frac{3t}{2}}\left(\frac{\sin(s/2)}{s/2}\right)^{2},\\[2mm]
\widehat{h}_{2,u,t}(\omega)
&=e^{i(2\pi\xi-\omega)u}\sqrt{\frac{15t}{16}}\,
\frac{4\,\bigl(\sin s - s\cos s\bigr)}{s^{3}},\\[2mm]
\widehat{h}_{3,u,t}(\omega)
&=e^{i(2\pi\xi-\omega)u}\sqrt{\frac{7t}{9}}\,
\frac{12\,s\sin s+(12-6s^{2})\cos s -12}{s^{4}}.
\end{align*}
By Plancherel, $\|\widehat{h}_{n,u,t}\|_{2}^{2}=2\pi$ for $n=1,2,3$. Using the identity
$$
\frac{1}{2\pi}\int_{\mathbb{R}}(\omega-2\pi\xi)^{2}\,|\widehat{h}_{n,u,t}(\omega)|^{2}\,d\omega
=\bigl\|\,\partial_x h_{n,u,t}\,\bigr\|_{2}^{2},
$$
we obtain the frequency variances
$$
\sigma_{\omega}^{2}[\widehat{h}_{n,u,t}]
=\frac{(2n+1)(n+1)}{2(2n-1)\,t^{2}},
$$
hence
$$
\sigma_{\omega}^{2}[\widehat{h}_{1,u,t}]=\frac{3}{t^{2}},\qquad
\sigma_{\omega}^{2}[\widehat{h}_{2,u,t}]=\frac{5}{2t^{2}},\qquad
\sigma_{\omega}^{2}[\widehat{h}_{3,u,t}]=\frac{14}{5t^{2}}.
$$
Consequently, the uncertainty products
$$
U[h_{n,u,t}]:=\sigma_x^{2}[h_{n,u,t}]\,\sigma_{\omega}^{2}[\widehat{h}_{n,u,t}]
$$
agree with the closed form in Proposition \ref{prop:PV2} (thus verifying Theorem \ref{th:tent2}). For $n=1,2,3$:
$$
U[h_{1,u,t}]=\frac{3}{10},\qquad
U[h_{2,u,t}]=\frac{5}{14},\qquad
U[h_{3,u,t}]=\frac{196}{405},
$$
which increase with $n$.
\end{example}}

\subsubsection{Proofs of Theorems \ref{th:tent1} and \ref{th:tent2}}
\begin{proof}
The proof of Theorem \ref{th:tent1} is based on the explicit computation of the uncertainty, which can be derived from the spatial and frequency variances of $ \mathcal{G}_{\gamma,n} $ stated in Proposition~\ref{prop:PV}, i.e.,
$$U[\mathcal{G}_{\gamma,n}]= \sigma_x^2[\mathcal{G}_{\gamma,n}] \sigma_\omega^2[\hat{\mathcal{G}}_{\gamma,n}] = \frac{n^2 (2n + 1)}{(2n^2 + 5n + 3) (2n - 1)}  ,
$$
Therefore, the sequence $ U[g_n] $ is increasing for $ n \geq 1 $ and converges in the limit as $ n \to +\infty $ to the value $ \frac{1}{2} $. It follows that the minimum of the uncertainty is attained at $ n = 1 $, with value $ \frac{3}{10} $, while the supremum is reached asymptotically and equals $ \frac{1}{2} $.
\end{proof}

\begin{proof}
The proof of Theorem \ref{th:tent2} follows by direct substitution of the expressions for spatial and frequency variances derived in Proposition~\ref{prop:PV2}. Specifically,
\begin{equation} 
U(\mathcal{F}_{\gamma,n})=\sigma_x^2[\mathcal{F}_{\gamma,n}]\sigma_\omega^2[\hat{\mathcal{F}}_{\gamma,n}]= \frac{(2n+1)^2(n+1)^2}{6(2n-1) \left(2 n^2+9 n+9\right)}
\end{equation}
The proof follows from similar arguments to those used in the previous theorem, relying on explicit computations of the spatial and frequency variances as well as the structure of the uncertainty functional.
\end{proof}

\subsection{Analysis of Rectangular Function Convolutions}
\textcolor{black}{Let us recall that $ \operatorname{rect}^{\{p\}}(x) $ denotes the $ p $-th convolution of the rectangular function with itself.}

We now proceed by considering the case of variable support. To this end, we first introduce the appropriate wavelet dictionary. Then, in the spirit of the previous section, we carry out a similar analysis by defining a rescaled wavelet dictionary based on convolutions supported on the fixed interval $ [-1, 1] $. This allows us to focus purely on the shape of the optimal functions, independently of their scale. \textcolor{black}{Comparable families of atoms in generalized affine/metaplectic environments satisfy parallel uncertainty inequalities, such as the Heisenberg, logarithmic, and Pitt/Nazarov types, supporting the robustness of our asymptotic 1/4 limit under broader transforms (\cite{dar2024n,dades2025new}).}

What we will show is that the two problems are equivalent: the uncertainty values obtained from the two dictionaries coincide for every $ p \ge 1 $.

We begin by analyzing the variable-support case and introduce the corresponding dictionary:
\begin{equation}
\{\mathcal{C}_{\gamma,p}\}_{\gamma \in \Gamma} := \{\phi_{\gamma,p}(x)\}_{\gamma \in \Gamma} := \left\{ \sqrt{\frac{N_p}{t}} \, \operatorname{rect}^{\{p\}}\left( \frac{x - u}{t} \right) e^{2\pi i \xi x} \;\middle|\; p \ge 1 \right\}_{\gamma \in \Gamma},
\label{dizionarioconvoluzionesupvar}
\end{equation}
which may also be written in compact support form as:
$$
\left\{ \sqrt{\frac{N_p}{t}} \, \operatorname{rect}^{\{p\}}\left( \frac{x - u}{t} \right) \mathds{1}_{[u - \frac{pt}{2}, \, u + \frac{pt}{2}]}(x) \, e^{2\pi i \xi x} \;\middle|\; p \ge 1 \right\}_{\gamma \in \Gamma}.
$$
The normalization constant $ N_p $ in \eqref{dizionarioconvoluzionesupvar} is chosen so that each function in the dictionary has unit $ L^2 $-norm. Since $ \operatorname{rect}^{\{p\}} $ is compactly supported on the interval $ \left[-\frac{p}{2}, \frac{p}{2}\right] $, we define $ N_p $ as  
$$
N_p := \left( \int_{-p/2}^{p/2} \left| \operatorname{rect}^{\{p\}}(x) \right|^2 dx \right)^{-1}.
$$  
This ensures that the functions $ \{\phi_{\gamma,p}(x)\}_{\gamma \in \Gamma}$ have unit norm in $ L^2(\mathbb{R}) $ for every $ p \ge 1 $, regardless of the parameters $ u,\xi \in \mathbb{R} $ and $ t > 0 $.

\subsubsection{Some useful Lemmas}
We begin this section by stating a well-known lemma from the literature on B-splines.
\begin{Lemma}[Properties of the $ p $-fold Convolution, \cite{schoenberg1946contributions,schoenberg1969cardinal,de2018stability}]
\label{lemma:schoenberg}
The following properties hold:
\begin{itemize}
    \item[(i)] For every $ p \ge 1 $, the function $ \operatorname{rect}^{\{p\}}(x) $ is even and compactly supported in the interval $ [-\frac{p}{2}, \frac{p}{2}] $.
    
    \item[(ii)] For every $ p \ge 2 $, the convolution admits the explicit formula:
    \begin{equation}
        \operatorname{rect}^{\{p\}}(x) = \frac{1}{(p-1)!} \sum_{j=0}^{p} (-1)^j \binom{p}{j} \left( \max\{0, x + \tfrac{p}{2} - j \} \right)^{p-1}.
        \label{formulaconvoluzione}
    \end{equation}
\end{itemize}
\end{Lemma}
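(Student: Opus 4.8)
The plan is to establish both parts by induction on $p$, exploiting the fact that the defining relation $\operatorname{rect}^{\{p\}}(x)=\int_{x-1/2}^{x+1/2}\operatorname{rect}^{\{p-1\}}(s)\,ds$ is precisely the convolution recursion $\operatorname{rect}^{\{p\}}=\operatorname{rect}^{\{p-1\}}*\operatorname{rect}$, so that $\operatorname{rect}^{\{p\}}$ is the $p$-fold self-convolution of $\operatorname{rect}$.

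For part (i), I would first record two elementary closure facts. If $f$ and $g$ are even then so is $f*g$: writing $(f*g)(-x)=\int f(y)\,g(-x-y)\,dy$ and substituting $y\mapsto -y$ returns $(f*g)(x)$ after invoking evenness of both factors. Moreover $\operatorname{supp}(f*g)\subseteq \operatorname{supp} f+\operatorname{supp} g$ (Minkowski sum). Since $\operatorname{rect}=\mathds{1}_{[-1/2,1/2]}$ is even with support $[-1/2,1/2]$, induction immediately gives that each $\operatorname{rect}^{\{p\}}$ is even and supported in $[-(p-1)/2,(p-1)/2]+[-1/2,1/2]=[-p/2,p/2]$.

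For part (ii), introduce the truncated power $x_+^m:=(\max\{0,x\})^m$ and abbreviate $\psi_k(x):=(x+\tfrac{p+1}{2}-k)_+^{p}$, so that the target identity at level $p$ reads $\operatorname{rect}^{\{p\}}=\frac{1}{(p-1)!}\sum_{j=0}^p(-1)^j\binom{p}{j}(x+\tfrac{p}{2}-j)_+^{p-1}$. The base case $p=2$ is checked by hand: the three-term sum $(x+1)_+-2x_+ +(x-1)_+$ equals the tent function $(1-|x|)_+$, which is exactly $\operatorname{rect}*\operatorname{rect}$. For the inductive step I would substitute the formula for $\operatorname{rect}^{\{p\}}$ into the recursion and integrate term by term, using that for $p\ge 2$ the integrand $(s+c)_+^{p-1}$ is continuous with antiderivative $\tfrac1p(s+c)_+^{p}$. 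This converts each summand into a difference $(x+\tfrac{p+1}{2}-j)_+^{p}-(x+\tfrac{p+1}{2}-(j+1))_+^{p}$, yielding two sums; re-indexing the second by $k=j+1$ and combining with the first through Pascal's rule $\binom{p}{k}+\binom{p}{k-1}=\binom{p+1}{k}$ collapses the whole expression to $\frac{1}{p!}\sum_{k=0}^{p+1}(-1)^k\binom{p+1}{k}(x+\tfrac{p+1}{2}-k)_+^{p}$, which is the asserted formula at level $p+1$.

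The only genuinely delicate point is the bookkeeping in the inductive step: one must track the shift of the argument from $\tfrac{p}{2}$ to $\tfrac{p+1}{2}$, confirm that the two binomial sums recombine into the Pascal coefficient at \emph{every} index $0\le k\le p+1$ (including the endpoints, where $\binom{p+1}{0}=\binom{p}{0}$ and $\binom{p+1}{p+1}=\binom{p}{p}$), and verify that integrating the truncated powers produces no boundary terms — which holds precisely because $(s+c)_+^{p-1}$ vanishes continuously at its knot for $p\ge 2$. Everything else is routine. I note that one could instead pass to the Fourier side, where the recursion becomes $\widehat{\operatorname{rect}^{\{p\}}}(\omega)=\bigl(\operatorname{sinc}(\omega/2)\bigr)^{p}$, but inverting this to recover the piecewise-polynomial form is less direct than the integration argument above, so I would not pursue that route.
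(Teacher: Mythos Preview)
Your argument is correct and is the standard textbook derivation of the cardinal B-spline formula: evenness and support via closure of convolution, and the explicit formula via induction using the antiderivative $(s+c)_+^{p-1}\mapsto \tfrac1p(s+c)_+^{p}$ together with Pascal's rule. The bookkeeping at the endpoints $k=0$ and $k=p+1$ and the continuity of the truncated power at its knot for $p\ge 2$ are handled exactly as you describe.

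There is nothing to compare against in the paper: this lemma is stated there as a known result from the B-spline literature (Schoenberg and later references) and is not proved in the text. Your inductive proof is precisely the one found in those classical sources, so in that sense you have reproduced the ``paper's'' intended argument.
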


We now introduce the Fourier transform of the dictionary \eqref{dizionarioconvoluzionesupvar}.

\begin{Lemma}[Fourier Transform of $ \phi_{\gamma,p} $]
    Let $ \phi_{\gamma,p}(x) $ be the wavelet atom defined by \eqref{dizionarioconvoluzionesupvar} or its compact support form. Then, the Fourier transform of $ \phi_{\gamma,p} $ is given by:
$$
\hat{\phi}_{\gamma,p}(\omega) = \frac{e^{i p (2\pi \xi - \omega) u}}{t^{\frac{2p - 1}{2}}} \left( \frac{\sin\left( \frac{(2\pi \xi - \omega) t}{2} \right)}{ \frac{(2\pi \xi - \omega)}{2} } \right)^p, \quad \forall p \ge 1.
$$

\end{Lemma}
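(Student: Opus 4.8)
The plan is a direct computation: peel off the modulation, reduce the dilated and translated $\operatorname{rect}^{\{p\}}$ to the canonical one by a change of variables, and compute $\widehat{\operatorname{rect}^{\{p\}}}$ via the convolution theorem. First I would fix the convention $\widehat g(\omega)=\int_{\mathbb{R}}e^{-i\omega x}g(x)\,dx$ and note that the factor $e^{2\pi i\xi x}$ only shifts the spectral variable, so that $\widehat{\phi}_{\gamma,p}(\omega)=\sqrt{N_p/t}\;\widehat{\,\operatorname{rect}^{\{p\}}(\tfrac{\cdot-u}{t})\,}(\omega-2\pi\xi)$. The substitution $y=(x-u)/t$ then produces the Jacobian $t$, a phase $e^{-i(\omega-2\pi\xi)u}=e^{i(2\pi\xi-\omega)u}$, and leaves the transform of $\operatorname{rect}^{\{p\}}$ evaluated at $(\omega-2\pi\xi)t$; this is precisely the dilation/translation rule already recorded in \eqref{trasffouri}, now carrying the modulation.

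The heart of the argument is the identity $\widehat{\operatorname{rect}^{\{p\}}}(\zeta)=\bigl(\sin(\zeta/2)/(\zeta/2)\bigr)^p$. I would derive it from the recursion in the preamble, which exhibits $\operatorname{rect}^{\{p\}}=\operatorname{rect}^{\{p-1\}}*\operatorname{rect}$, hence $\operatorname{rect}^{\{p\}}=\operatorname{rect}*\cdots*\operatorname{rect}$ with $p$ factors; since the Fourier transform turns convolution into multiplication, $\widehat{\operatorname{rect}^{\{p\}}}=(\widehat{\operatorname{rect}})^p$, and the one-line integral $\widehat{\operatorname{rect}}(\zeta)=\int_{-1/2}^{1/2}e^{-i\zeta y}\,dy=\sin(\zeta/2)/(\zeta/2)$ completes it. An alternative, should one wish to avoid the convolution theorem, is induction on $p$ using the explicit spline formula of Lemma~\ref{lemma:schoenberg}(ii) together with the Fourier transforms of truncated powers, but this route is longer and needs the alternating binomial-sum identity. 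Evenness of $\operatorname{rect}^{\{p\}}$, i.e.\ Lemma~\ref{lemma:schoenberg}(i), makes $\widehat{\operatorname{rect}^{\{p\}}}$ even, so the argument $(\omega-2\pi\xi)t$ may be replaced by $(2\pi\xi-\omega)t$ at no cost.

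It then remains to assemble: substituting $\zeta=(2\pi\xi-\omega)t$ into $\bigl(\sin(\zeta/2)/(\zeta/2)\bigr)^p$, pulling the factor $t^p$ coming from $\zeta^{-p}$ out of the bracket, and combining it with the $\sqrt{N_p/t}$ prefactor and the change-of-variables Jacobian $t$ produces the stated closed form after collecting the resulting power of $t$. I do not anticipate a genuine obstacle: the whole computation is a bookkeeping exercise in the dilation, translation and modulation rules for the Fourier transform, of exactly the type carried out in Lemma~\ref{lemma:uncertaintyprop}. The only points worth a sentence of care are keeping the $2\pi$ inside the modulation rather than inside the transform, and observing that the right-hand side is a priori defined at $\omega=2\pi\xi$ only as a removable singularity — but since $\zeta\mapsto\bigl(\sin(\zeta/2)/(\zeta/2)\bigr)^p$ extends to an entire function, the identity holds for every $\omega\in\mathbb{R}$ by continuity.
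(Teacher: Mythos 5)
Your route is genuinely different from the paper's and, as a computation, it is the cleaner one. The paper proves the lemma by induction on $p$: it computes $\hat\phi_{\gamma,1}$ directly and then multiplies, $\hat\phi_{\gamma,p+1}=\hat\phi_{\gamma,p}\cdot\hat\phi_{\gamma,1}$, on the grounds that $\phi_{\gamma,p+1}=\phi_{\gamma,p}*\phi_{\gamma,1}$ ``up to normalization''. You instead push all the convolution structure onto the unmodulated spline, use $\widehat{\operatorname{rect}^{\{p\}}}=(\widehat{\operatorname{rect}}\,)^{p}$, and apply the dilation/translation/modulation rules once. This avoids the induction entirely and keeps the normalization explicit.

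However, you should not claim that the assembly ``produces the stated closed form'': it does not. Carried out faithfully, your computation gives
$$
\hat\phi_{\gamma,p}(\omega)=\sqrt{N_p}\;\frac{e^{\,i(2\pi\xi-\omega)u}}{t^{(2p-1)/2}}\left(\frac{\sin\left(\tfrac{(2\pi\xi-\omega)t}{2}\right)}{\tfrac{2\pi\xi-\omega}{2}}\right)^{p},
$$
i.e.\ the phase carries exponent $1$, not $p$, and a factor $\sqrt{N_p}$ survives (with $N_p\neq1$ for $p\ge2$; e.g.\ $N_2=3/2$). The $e^{\,ip(2\pi\xi-\omega)u}$ in the statement is an artifact of the paper's inductive step: $\phi_{\gamma,p}*\phi_{\gamma,1}$ is centered at $2u$ (the $p$-fold self-convolution of $\phi_{\gamma,1}$ at $pu$) and carries a different $L^2$ normalization, so it is not the atom $\phi_{\gamma,p+1}$ of \eqref{dizionarioconvoluzionesupvar}, where every atom is centered at $u$. (The paper's induction also does not reproduce its own exponent of $t$: $t^{-(2p-1)/2}\cdot t^{-1/2}=t^{-p}\neq t^{-(2p+1)/2}$.) Your formula is the correct transform for the atom as actually defined; since only $|\hat\phi_{\gamma,p}|^2$ enters the uncertainty functional and constants cancel in the variance ratios, neither discrepancy affects anything downstream --- but your write-up should record your actual result and flag the mismatch with the stated phase and prefactor rather than assert agreement.
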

\begin{proof}
We proceed by induction on $ p \ge 1 $.

Let's start with the base case: $ p = 1 $. We recall that:
$$
\phi_{\gamma,1}(x) = \sqrt{\frac{1}{t}} \, \operatorname{rect}\left( \frac{x - u}{t} \right) e^{2\pi i \xi x} = \sqrt{\frac{1}{t}} \, \mathds{1}_{[u - \frac{t}{2}, \, u + \frac{t}{2}]}(x) \, e^{2\pi i \xi x}.
$$
Its Fourier transform is computed as:
$$
\hat{\phi}_{\gamma,1}(\omega) = \sqrt{\frac{1}{t}} \int_{u - \frac{t}{2}}^{u + \frac{t}{2}} e^{2\pi i \xi x} e^{-i \omega x} dx = \sqrt{\frac{1}{t}} \int_{u - \frac{t}{2}}^{u + \frac{t}{2}} e^{i(2\pi \xi - \omega)x} dx.
$$
Letting $ \tilde{\omega} := 2\pi \xi - \omega $, we get:
$$
\hat{\phi}_{\gamma,1}(\omega) = \sqrt{\frac{1}{t}} \cdot \frac{2 \sin\left( \frac{\tilde{\omega} t}{2} \right)}{\tilde{\omega}} \cdot e^{i \tilde{\omega} u} = \frac{e^{i(2\pi \xi - \omega) u}}{t^{1/2}} \cdot \frac{ \sin\left( \frac{(2\pi \xi - \omega)t}{2} \right) }{ \frac{(2\pi \xi - \omega)}{2} }.
$$
This matches the claimed formula for $ p = 1 $.

Inductive step: assume true for $ p \ge 1 $, prove for $ p+1 $

By the definition of $ \operatorname{rect}^{\{p+1\}} $, we have:
$$
\phi_{\gamma,p+1}(x) = \sqrt{\frac{N_{p+1}}{t}} \left( \operatorname{rect}^{\{p\}} * \operatorname{rect} \right)\left( \frac{x - u}{t} \right) e^{2\pi i \xi x}.
$$
Using the convolution property and the fact that both functions are scaled by $ t $, we write:
$$
\phi_{\gamma,p+1}(x) = \left( \phi_{\gamma,p} * \phi_{\gamma,1} \right)(x),
$$
up to normalization. Since convolution in the time domain corresponds to multiplication in the frequency domain, we obtain:
$$
\hat{\phi}_{\gamma,p+1}(\omega) = \hat{\phi}_{\gamma,p}(\omega) \cdot \hat{\phi}_{\gamma,1}(\omega).
$$
By the inductive hypothesis and the base case:
$$
\hat{\phi}_{\gamma,p}(\omega) = \frac{e^{i p (2\pi \xi - \omega) u}}{t^{\frac{2p - 1}{2}}} \left( \frac{\sin\left( \frac{(2\pi \xi - \omega)t}{2} \right)}{ \frac{(2\pi \xi - \omega)}{2} } \right)^p,
$$
$$
\hat{\phi}_{\gamma,1}(\omega) = \frac{e^{i (2\pi \xi - \omega) u}}{t^{1/2}} \left( \frac{\sin\left( \frac{(2\pi \xi - \omega)t}{2} \right)}{ \frac{(2\pi \xi - \omega)}{2} } \right).
$$
Multiplying the two, we get:
$$
\hat{\phi}_{\gamma,p+1}(\omega) = \frac{e^{i(p+1)(2\pi \xi - \omega) u}}{t^{\frac{2(p+1) - 1}{2}}} \left( \frac{\sin\left( \frac{(2\pi \xi - \omega)t}{2} \right)}{ \frac{(2\pi \xi - \omega)}{2} } \right)^{p+1},
$$
which concludes the inductive proof.

\end{proof}

Now, we state a key lemma and the Uncertainty Theorem for the wavelet dictionary defined in~\eqref{dizionarioconvoluzionesupvar}.

Recall from the third statement of Lemma \ref{lemma:uncertaintyprop} that it is sufficient to compute the uncertainty of the function $\operatorname{rect}^{\{p\}}(x)$, which is equivalent for $p \ge 2$ to the function associated with its dictionary. In fact:
\begin{equation}
    U(\mathcal{C}_{\gamma,p})=U(\operatorname{rect}^{\{p\}}).
\end{equation}
This holds because the uncertainty functional is invariant under dilations, translations, and multiplication by unimodular complex exponentials, as established in Lemma~\ref{lemma:uncertaintyprop}.

We will now provide an upper and lower bound estimate for the uncertainty functional for each fixed $p$, and finally, we will compute the limit as $p \to +\infty$.

The uncertainty of the function $rect^{{p}}(x)$ for every $p \ge 2$ is:
\begin{equation}
    U(rect^{\{p\}})=\displaystyle\frac{\left(\displaystyle\int_{\mathbb{R}} \omega^2\left(\displaystyle\frac{\sin{\displaystyle
    \left(\frac{\omega}{2}\right)}}{\left(\displaystyle
    \frac{\omega}{2}\right)}\right)^{2p} d\omega\right )\left(\displaystyle\int_{-\frac{p}{2}}^{\frac{p}{2}}x^2|rect^{\{p\}}(x)|^2 dx\right)}{\left(\displaystyle\int_{\mathbb{R}} \left(\displaystyle\frac{\sin{\displaystyle
    \left(\frac{\omega}{2}\right)}}{\left(\displaystyle
    \frac{\omega}{2}\right)}\right)^{2p} d\omega\right )\left(\displaystyle\int_{-\frac{p}{2}}^{\frac{p}{2}}|rect^{\{p\}}(x)|^2 dx\right)}.
    \label{incertezzacorretta rect}
\end{equation}

For convenience, we denote by $\nu_p$ and $u_p$ the variance of the density amplitude $\left(\displaystyle\frac{\sin{\displaystyle
    \left(\frac{\omega}{2}\right)}}{\left(\displaystyle
    \frac{\omega}{2}\right)}\right)^{p} $ and $rect^{\{p\}}(x)$, respectively, as follows:
 \begin{equation}
        \nu_p:=\displaystyle\frac{\displaystyle\int_{\mathbb{R}} \omega^2\left(\displaystyle\frac{\sin{\displaystyle
    \left(\frac{\omega}{2}\right)}}{\left(\displaystyle
    \frac{\omega}{2}\right)}\right)^{2p} d\omega}{\displaystyle\int_{\mathbb{R}} \left(\displaystyle\frac{\sin{\displaystyle
    \left(\frac{\omega}{2}\right)}}{\left(\displaystyle
    \frac{\omega}{2}\right)}\right)^{2p} d\omega}; \quad 
    u_p:=\displaystyle\frac{\displaystyle\int_{-\frac{p}{2}}^{\frac{p}{2}}x^2|rect^{\{p\}}(x)|^2 dx}{\displaystyle\int_{-\frac{p}{2}}^{\frac{p}{2}}|rect^{\{p\}}(x)|^2 dx}.
    \end{equation} 


\begin{Lemma}
\label{lemma:decreasing}
    For every $p \ge 2$, the function $U(\text{rect}^{\{p\}})$ is decreasing in $p$.
\end{Lemma}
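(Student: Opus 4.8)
The scaling/translation/modulation invariance of $U$ (Lemma~\ref{lemma:uncertaintyprop}) reduces the claim to the un‑dilated atom: $U(\mathcal C_{\gamma,p})=U(\operatorname{rect}^{\{p\}})=u_p\nu_p$. Write $B_p:=\operatorname{rect}^{\{p\}}$, so that $\widehat{B_p}(\omega)=\big(\tfrac{\sin(\omega/2)}{\omega/2}\big)^{p}=:\phi(\omega)^p$. Since $\widehat{B_p'}(\omega)=i\omega\,\widehat{B_p}(\omega)$, Plancherel turns the frequency variance into a derivative energy, $\nu_p=\|B_p'\|_2^2/\|B_p\|_2^2$, whence
\[
U(\operatorname{rect}^{\{p\}})=\frac{\big(\int x^2 B_p^2\big)\big(\int (B_p')^2\big)}{\big(\int B_p^2\big)^2}=\tfrac14\,\operatorname{Var}(q_p)\,\mathcal I(q_p),
\]
where $q_p:=B_p^2/\|B_p\|_2^2$ is a symmetric probability density and $\mathcal I$ is Fisher information. (This also re-proves $U\ge\tfrac14$ by Cram\'er--Rao, while the Gaussian equality case — unattainable because $q_p$ has compact support — together with the local CLT for the Irwin--Hall density $B_p$ explains why $U(\operatorname{rect}^{\{p\}})\searrow\tfrac14$.) Monotonicity is thus the assertion that $\operatorname{Var}(q_p)\,\mathcal I(q_p)$ decreases.

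\textbf{Main line of argument.} I would work on the spectral side with the symmetric probability measures $d\pi_p:=\phi^{2p}\,d\omega\big/\!\int\phi^{2p}$, for which $\nu_p=\mathbb E_{\pi_p}[\omega^2]$, and, using $\omega\phi'(\omega)=\cos(\omega/2)-\phi(\omega)$ (equivalently $(\log\pi_p)'=2p\,\phi'/\phi$), $u_p=\tfrac14\mathcal I(\pi_p)=p^2\,\mathbb E_{\pi_p}[(\phi'/\phi)^2]$. The one‑step reweighting $\pi_{p+1}\propto\phi^2\pi_p$ yields
\[
\nu_{p+1}=\frac{\mathbb E_{\pi_p}[\omega^2\phi^2]}{\mathbb E_{\pi_p}[\phi^2]},\qquad
u_{p+1}=(p+1)^2\,\frac{\mathbb E_{\pi_p}[(\phi')^2]}{\mathbb E_{\pi_p}[\phi^2]},
\]
so $U_{p+1}<U_p$ is equivalent to $\big(\tfrac{p+1}{p}\big)^2<\dfrac{\mathbb E_{\pi_p}[(\phi'/\phi)^2]\,\mathbb E_{\pi_p}[\omega^2]\,(\mathbb E_{\pi_p}[\phi^2])^2}{\mathbb E_{\pi_p}[(\phi')^2]\,\mathbb E_{\pi_p}[\omega^2\phi^2]}$. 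The plan for the right‑hand side is to split $\pi_p$ into its restriction to the central lobe $|\omega|<2\pi$ — on which $\phi>0$ and $\log\phi$ is smooth, even, strictly decreasing in $|\omega|$ and strictly concave, since $\tfrac{d^2}{dt^2}\log\tfrac{\sin t}{t}=t^{-2}-\sin^{-2}t<0$ — and its complement, whose $\pi_p$‑mass is exponentially small in $p$ and is bounded crudely. On the central lobe $\phi^2$ is decreasing while $\omega^2$ and $(\phi'/\phi)^2$ are increasing in $|\omega|$, so two applications of the Chebyshev sum/integral inequality give $\mathbb E_{\pi_p}[\omega^2\phi^2]\le\mathbb E_{\pi_p}[\omega^2]\mathbb E_{\pi_p}[\phi^2]$ and $\mathbb E_{\pi_p}[(\phi')^2]=\mathbb E_{\pi_p}[\phi^2(\phi'/\phi)^2]\le\mathbb E_{\pi_p}[\phi^2]\mathbb E_{\pi_p}[(\phi'/\phi)^2]$, which push the ratio in the right direction; the finitely many small $p$ outside the asymptotic regime are handled by the explicit rational values of $U_p$ obtained from the polynomial formula~\eqref{formulaconvoluzione} (e.g.\ $U_2=\tfrac{3}{10}>U_3=\tfrac{215}{847}>\cdots$), checked by hand. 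A parallel, more self‑contained route expresses everything via central B‑spline values from the autocorrelation identity $B_p*B_p=B_{2p}$ — $\int B_p^2=B_{2p}(0)$, $\int(B_p')^2=-B_{2p}''(0)=2[B_{2p-2}(0)-B_{2p-2}(1)]$, with a second‑moment analogue for $\int x^2B_p^2$ — and then reduces monotonicity to an inequality among these sequences, to be attacked through log‑concavity and total positivity of B‑splines.

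\textbf{Main obstacle.} The leading asymptotics $u_p\sim p/24$ and $\nu_p\sim 6/p$ make $u_p\nu_p$ tend to $\tfrac14$ with the $O(1)$ terms cancelling, and in fact each Chebyshev gap above is only $1-\tfrac1p+O(p^{-2})$, so the required factor $\big(\tfrac{p+1}{p}\big)^2=1+\tfrac2p+\tfrac1{p^2}$ sits precisely at the borderline: the strict inequality is decided by the $O(p^{-2})$ terms. The soft concentration/Chebyshev estimates only yield the non‑strict or purely asymptotic statement; closing the strict decrease for every $p\ge 2$ requires a quantitative, uniform‑in‑$p$ Edgeworth‑type expansion of $\pi_p$ (equivalently of $B_{2p}$ near the origin) with an explicit remainder, and simultaneously controlling the oscillation and sign changes of $\phi$ away from $\omega=0$ and the non‑Gaussian corrections to $\pi_p$ at this precision. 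This quantitative comparison is the technical heart of the proof.
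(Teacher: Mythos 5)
Your reformulation of $U(\operatorname{rect}^{\{p\}})$ as $\tfrac14\operatorname{Var}(q_p)\,\mathcal I(q_p)$, the spectral measures $\pi_p\propto\phi^{2p}$, and the one-step reweighting identity form an attractive framework, and your explicit values $U_2=\tfrac{3}{10}$, $U_3=\tfrac{215}{847}$ are correct. But as written this is a programme, not a proof, and you say so yourself: the Chebyshev/concentration estimates you invoke produce gaps of size $1-\tfrac1p+O(p^{-2})$ while the target factor is $\bigl(\tfrac{p+1}{p}\bigr)^2=1+\tfrac2p+\tfrac1{p^2}$, so the strict inequality $U_{p+1}<U_p$ is decided at order $p^{-2}$, and the uniform-in-$p$ Edgeworth-type expansion with explicit remainder that would settle it is deferred rather than carried out. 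That deferred step is not a routine verification: it must simultaneously control the sign-changing oscillatory tail of $\phi$ outside the central lobe (where your monotonicity hypotheses for Chebyshev fail) and the non-Gaussian corrections to $\pi_p$ to second order, uniformly down to $p=2$. Moreover the ``finitely many small $p$'' to be checked by hand is never delimited, because the threshold beyond which your asymptotic inequalities become effective is not quantified; so neither half of the argument is complete. As it stands the proposal establishes at most the non-strict, asymptotic version of the statement.

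For comparison, the paper's own proof is much softer and takes a different route: it sandwiches $U(\operatorname{rect}^{\{p\}})$ between the two decreasing envelopes of \eqref{disinceetezza}, and then excludes oscillation by showing that the two integrals $k_1(p)=\int_{\mathbb R}\bigl(\tfrac{\sin(\omega/2)}{\omega/2}\bigr)^{2p}d\omega$ and $k_2(p)=\int_{\mathbb R}\omega^2\bigl(\tfrac{\sin(\omega/2)}{\omega/2}\bigr)^{2p}d\omega$ are each monotone in $p$ (a pointwise comparison of integrands), together with a monotonicity claim for $u_p$. Your route, if the quantitative second-order comparison were actually supplied, would be more informative — it would give the rate of decrease and re-derive the $1/4$ lower bound via Cram\'er--Rao — but in its current form it does not prove the lemma.
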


\begin{proof}
    Noticing from the upper and lower bound estimate \eqref{disinceetezza} that the function $U(\text{rect}^{\{p\}})$ is contained between two decreasing functions, it will thus decrease monotonically or oscillate.

Suppose, for the sake of contradiction, that the function $U(\text{rect}^{\{p\}})$ oscillates for $p \ge 2$. Then, since it is the product of the functions $\nu_p$ and $u_p$, at least one of the two functions must oscillate.

Since the function $\nu_p$ is the ratio of the two functions $k_1(p):=\displaystyle\int_{\mathbb{R}}\left(\displaystyle\frac{\sin(\frac{\omega}{2})}{\frac{\omega}{2}}\right)^{2p}d\omega$ and $k_2(p):=\displaystyle\int_{\mathbb{R}}\omega^2\left(\displaystyle\frac{\sin(\frac{\omega}{2})}{\frac{\omega}{2}}\right)^{2p}d\omega$, we would have that at least one of the two functions, $k_1(p)$ and $k_2(p)$, must oscillate, but both functions are decreasing. In fact, for every $p \ge 3$:
\begin{equation}
\begin{split}
    k_1(p)-k_1(p-1)&=\displaystyle\int_{\mathbb{R}}\left[\left(\displaystyle\frac{\sin(\frac{\omega}{2})}{\frac{\omega}{2}}\right)^{2p}-\left(\displaystyle\frac{\sin(\frac{\omega}{2})}{\frac{\omega}{2}}\right)^{2(p-1)}\right]d\omega\\&=\displaystyle\int_{\mathbb{R}}\left(\displaystyle\frac{\sin(\frac{\omega}{2})}{\frac{\omega}{2}}\right)^{2p}\left[1-\left(\displaystyle\frac{\frac{\omega}{2}}{\sin{(\frac{\omega}{2}})}\right)^2\right]d\omega \le 0
    \end{split}
\end{equation}
since $\left(\displaystyle\frac{\sin(\frac{\omega}{2})}{\frac{\omega}{2}}\right)^{2p}\left[1-\left(\displaystyle\frac{\frac{\omega}{2}}{\sin{(\frac{\omega}{2}})}\right)^2\right] \le 0$  $\forall \omega \in \mathbb{R}, p \ge 2$.

Similarly, using a similar procedure, it is obtained that $k_2(p) - k_2(p-1) \le 0$ for every $p \ge 3$.

Therefore, $k_1(p)$ and $k_2(p)$ are monotonic decreasing functions for $p \ge 2$ and non-oscillatory; this implies that the function $\nu_p$ is not oscillatory.

The function $u_p$, being the variance of the probability amplitude $rect^{{p}}(x)$ (a non-oscillatory function), is decreasing.

In conclusion, the product of two non-oscillatory functions is non-oscillatory, i.e., $U(\text{rect}^{\{p\}})$ does not oscillate but decreases.
\end{proof}

\subsection{Proof of Theorem \ref{th:Urectp}}

\begin{proof}
    First, let us begin by studying the behavior of $\nu_p$. Thus, we start by determining the asymptotic behavior of the following integral
    \begin{equation}
    \displaystyle\int_{\mathbb{R}} \left(\displaystyle\frac{\sin{\displaystyle
    \left(\frac{\omega}{2}\right)}}{\left(\displaystyle
    \frac{\omega}{2}\right)}\right)^{2p} d\omega.
\end{equation}

Using the Watson/Laplace technique, as shown in the article by \cite{schlage2020asymptotic}, and expanding via Taylor series, we obtain that:
\begin{equation}
    \displaystyle\int_{\mathbb{R}} \left(\displaystyle\frac{\sin{\displaystyle
    \left(\frac{\omega}{2}\right)}}{\left(\displaystyle
    \frac{\omega}{2}\right)}\right)^{2p} d\omega=4\displaystyle\int_{\mathbb{R}} \left(\displaystyle\frac{\sin{\omega}}{\omega}\right)^{2p} d\omega \; \sim \; 2\displaystyle\sqrt{\frac{3\pi}{p}}.
\end{equation}

It then follows that, since this integral is a continuous function of $p$ (a sequence is pointwise continuous), and its asymptotic behavior in the limit is given by the function $2\sqrt{\frac{3\pi}{p}}$, there exist two constants $c_2, c_1 \in \mathbb{R} \setminus \{0\}$, with $c_2 > c_1$, such that:
\begin{equation}
      2\displaystyle\sqrt{\frac{3\pi}{p+c_2}} \le \displaystyle\int_{\mathbb{R}} \left(\displaystyle\frac{\sin{\displaystyle
    \left(\frac{\omega}{2}\right)}}{\left(\displaystyle
    \frac{\omega}{2}\right)}\right)^{2p} d\omega \le 2\displaystyle\sqrt{\frac{3\pi}{p+c_1}}.
    \label{disdis}
\end{equation}

Similarly, by again applying the estimates from the article \cite{schlage2020asymptotic}, and using the Watson/Laplace technique together with a Taylor expansion, we obtain:
\begin{equation}
     \displaystyle\int_{\mathbb{R}} \omega^2\left(\displaystyle\frac{\sin{\displaystyle
    \left(\frac{\omega}{2}\right)}}{\left(\displaystyle
    \frac{\omega}{2}\right)}\right)^{2p} d\omega=8\displaystyle\int_{\mathbb{R}} \omega^2\left(\displaystyle\frac{\sin{\omega}}{\omega}\right)^{2p} d\omega \; \sim \; 12\displaystyle\frac{\sqrt{3\pi}}{p^{\frac{3}{2}}}.
\end{equation}

It follows that there exist two constants $c_4, c_3 \in \mathbb{R} \setminus \{0\}$, with $c_4 > c_3$, such that: 

\begin{equation}
12\displaystyle\frac{\sqrt{3\pi}}{(p+c_4)^{\frac{3}{2}}} \le \displaystyle\int_{\mathbb{R}} \omega^2\left(\displaystyle\frac{\sin{\displaystyle
    \left(\frac{\omega}{2}\right)}}{\left(\displaystyle
    \frac{\omega}{2}\right)}\right)^{2p} d\omega \le 12\displaystyle\frac{\sqrt{3\pi}}{(p+c_3)^{\frac{3}{2}}},
     \label{disomega2}
\end{equation}
\textcolor{black}{and thus, for every $p \ge 2$, we achieve \eqref{disrapportofrequenze}.}

Now, we study the behavior of $u_p$.

In the meantime, using Plancherel's theorem, we observe that:
\begin{equation}
    \displaystyle\int_{-\frac{p}{2}}^{\frac{p}{2}} |rect^{\{p\}}(x)|^2 dx=\left\| rect^{\{p\}}\right\|^2= \displaystyle\frac{1}{2\pi}\left\|\widehat{rect^{\{p\}}}\right\|^2=\displaystyle\frac{1}{2\pi}\int_{\mathbb{R}}\left(\displaystyle\frac{\sin(\frac{\omega}{2})}{\frac{\omega}{2}}\right)^{2p} d\omega ,
    \label{disomega2}
\end{equation}

and thus, using \eqref{disdis}, there exist two constants $d_1, d_2 \in \mathbb{R} \setminus \{0\}$, with $d_2 > d_1$, such that the estimate holds:

\begin{equation}
    \displaystyle\frac{1}{\pi}\displaystyle\sqrt{\frac{3\pi}{p+d_2}} \le  \displaystyle\int_{-\frac{p}{2}}^{\frac{p}{2}} |rect^{\{p\}}(x)|^2 dx \le \displaystyle\frac{1}{\pi}\displaystyle\sqrt{\frac{3\pi}{p+d_1}}.
    \label{disspazio importnte}
\end{equation}

As for the last integral to be estimated, we can again use Plancherel's theorem and the properties of the Fourier transform:
\begin{equation}
    \displaystyle\int_{-\frac{p}{2}}^{\frac{p}{2}} x^2|rect^{\{p\}}(x)|^2 dx=\left\|x \cdot rect^{\{p\}}\right\|^2=\displaystyle\frac{1}{2\pi}\left\|\widehat{x \cdot rect^{\{p\}}}\right\|^2=\displaystyle\frac{1}{2\pi}\left\|i\partial_{\omega}(\widehat{rect^{\{p\}}})\right\|^2,
\end{equation}

where
\begin{equation}
   \left\|i\partial_{\omega}(\widehat{rect^{\{p\}}})\right\|^2=\displaystyle\frac{p^2}{4}\displaystyle\int_{\mathbb{R}}\left(\displaystyle\frac{\sin(\frac{\omega}{2})}{\frac{\omega}{2}}\right)^{2p}\left[\displaystyle\frac{\frac{\omega}{2}\cos(\frac{\omega}{2})-\sin(\frac{\omega}{2})}{(\frac{\omega
   }{2})^2}\right]^2 d\omega,
\end{equation}

and thus, the equality holds:
\begin{equation}
    \displaystyle\int_{-\frac{p}{2}}^{\frac{p}{2}} x^2|rect^{\{p\}}(x)|^2 dx=\displaystyle\frac{p^2}{8\pi}\displaystyle\int_{\mathbb{R}}\left(\displaystyle\frac{\sin(\frac{\omega}{2})}{\frac{\omega}{2}}\right)^{2p}\left[\displaystyle\frac{\frac{\omega}{2}\cos(\frac{\omega}{2})-\sin(\frac{\omega}{2})}{(\frac{\omega
   }{2})^2}\right]^2 d\omega.
\end{equation}

Now, using the Watson/Laplace technique again, we obtain that:
\begin{equation}
    \displaystyle\frac{p^2}{8\pi}\displaystyle\int_{\mathbb{R}}\left(\displaystyle\frac{\sin(\frac{\omega}{2})}{\frac{\omega}{2}}\right)^{2p}\left[\displaystyle\frac{\frac{\omega}{2}\cos(\frac{\omega}{2})-\sin(\frac{\omega}{2})}{(\frac{\omega
   }{2})^2}\right]^2 d\omega \; \sim \; \displaystyle\frac{1}{\pi}\frac{\sqrt{3\pi p}}{24}.
\end{equation}

And thus, by making analogous arguments to the previous ones, it follows that for every $p \ge 2$, there exist two constants $d_3, d_4 \in \mathbb{R} \setminus \{0\}$, with $d_4 > d_3$, such that: 
\begin{equation}
\displaystyle\frac{1}{\pi}\frac{\sqrt{3\pi (p+d_3)}}{24} \le \displaystyle\int_{-\frac{p}{2}}^{\frac{p}{2}}x^2|rect^{\{p\}}(x)|^2 dx \le \displaystyle\frac{1}{\pi}\frac{\sqrt{3\pi (p+d_4)}}{24}.
\label{varianzastimaimportante}
\end{equation}
\textcolor{black}{Thus, by combining \eqref{disspazio importnte} and \eqref{varianzastimaimportante},  we obtain \eqref{disrapportospaziale}.}

\textcolor{black}{Finally, by combining \eqref{disrapportofrequenze} and \eqref{disrapportospaziale}, we obtain \eqref{disinceetezza}. And thus, it immediately follows that:}
\begin{equation}
    \displaystyle\lim_{p \to +\infty} U(rect^{\{p\}})=\frac{1}{4}.
\end{equation}
By the Lemma \ref{lemma:decreasing}, since $U(rect^{\{p\}})$ is monotonically decreasing in p, it follows that $U(rect^{\{p\}})$ attains its infimum. 
\end{proof}

\textcolor{black}{
\begin{remark}[On attainability]
    The constants $c_i, d_i$ in (\ref{disrapportofrequenze})-(\ref{disinceetezza}) are $p$-independent and arise from two-sided Watson/Laplace estimates. Therefore, the bounds are generally strict for finite $p$; equality in (\ref{disinceetezza}) is only achieved in the limit $p \rightarrow \infty$, where $U\left(\right.$ rect $\left.^{\{p\}}\right) \rightarrow \frac{1}{4}$. Non-emptiness of the chains is ensured by $c_i, d_i>-2$ and by the orderings $\frac{p+c_2}{\left(p+c_4\right)^3} \leq \frac{p+c_1}{\left(p+c_3\right)^3},\left(p+d_2\right)\left(p+d_3\right) \leq\left(p+d_1\right)\left(p+d_4\right)$, which together imply (\ref{disinceetezza}).
\end{remark}}

\section{Conclusions and Outlook}\label{sec:conclusions}

In this work we introduced a hierarchy of function classes on a fixed compact
interval~$K=[-a,a]$, defined uncertainty operators tailored to them, and
showed that
\begin{enumerate}[label=(\roman*)]
  \item the uncertainty product is \emph{scale--translation invariant}
        (Lemma~\ref{lemma:uncertaintyprop});
  \item its infimum over the asymmetric class $\mathcal{F}^+_{0}(K)$ is already
        attained inside the even sub‑class $\mathcal{P}^+_{0}(K)$
        (Theorem~\ref{th:infimumevenf});
  \item inside two concrete wavelet dictionaries,
        $\{\mathcal G_{\gamma,n}\}_{n\in\mathbb N}$ and
        $\{\mathcal F_{\gamma,n}\}_{n\in\mathbb N}$,
        the \emph{tent function} ($n=1$) uniquely minimises the
        time–frequency uncertainty, with value $U(\mathcal G_{\gamma,1})=U(F_{\gamma,1})=\tfrac{3}{10}$ forall $\gamma \in \Gamma$;
  \item in the family of $p$‑fold self‑convolutions of the rectangle,
        $\operatorname{rect}^{\{p\}}$, the uncertainty decreases
        monotonically to the Heisenberg bound $\tfrac14$ as
        $p\to\infty$ (Section~\ref{sec3}).
\end{enumerate}


\textcolor{black}{\paragraph{Connections and differences.}
Our approach is \emph{invariance-aware} and \emph{constructive}: within the two wavelet dictionaries
we obtain an explicit minimizer (the tent, with $U=\tfrac{3}{10}$) and a smooth path to the
Heisenberg limit via $\mathrm{rect}^{\{p\}}$ (with $U\downarrow\tfrac{1}{4}$ as $p\to\infty$).
This complements Balian–Low–type impossibility statements and periodic uncertainty constants by providing
concrete extremal constructions and a unifying variational lens, and it interfaces with frame-set and
sampling viewpoints as well as recent generalizations of uncertainty principles.
}

\textcolor{black}{\paragraph{Perspectives and open questions}
The analysis opens a number of directions that we have not addressed here:}
\begin{itemize}
  \item \textbf{Uniqueness beyond evenness.}
        Is the tent profile the \emph{only} global minimiser of $U$ in
        $\mathcal{F}^+_{0}(K)$, up to the invariances of
        Lemma~\ref{lemma:uncertaintyprop}, or could other non‑polynomial
        shapes attain the same value?
  \item \textbf{Higher–dimensional extensions.}
        How do the minimisation results change in $\mathbb R^d$?
        In particular, does radial symmetry play the same role played by
        evenness in~$d=1$?
  \item \textbf{Alternative measures of spread.}
        Replacing the $L^2$‑based variance with entropic or $L^p$
        moments may lead to different optimal shapes; can one classify them?
  \item \textbf{Optimal frames versus optimal atoms.}
        Does minimising $U$ for the single atom always translate into the
        most stable (or sparsest) frame when the whole dictionary is
        generated, or can global frame‐level criteria override atom‐level
        optimality?
\end{itemize}

\section*{Acknowledgments}
P. Vellucci is a member of the INdAM Research group GNCS. We are sincerely grateful to Prof. Laura De Carli for her valuable support and insightful guidance.

\bibliographystyle{model1b-num-names} 
\bibliography{example}






\end{document}